\documentclass[11pt]{amsart}
\usepackage{setspace}
\usepackage{amsmath}\allowdisplaybreaks
\usepackage{amscd,amssymb,stmaryrd,amstext,amsthm}
\usepackage{url}
\usepackage{subfigure}
\usepackage{enumitem}
\usepackage{graphicx}
\usepackage{color}
\usepackage{epstopdf}
\usepackage{caption}
\usepackage{multicol}
\usepackage{tikz}
\usepackage{pgfplots} 
\pgfplotsset{compat=newest}
\usetikzlibrary{decorations.pathreplacing,angles,quotes}
\usetikzlibrary{bending}
\usepackage{mathtools}
\usepackage[colorlinks]{hyperref}
\usepackage[all]{hypcap}
\usepackage[lite]{amsrefs}
\usepackage{amsfonts}
\usepackage[normalem]{ulem}

\newcounter{desccount}

\newcommand{\descref}[1]{\hyperref[#1]{#1}}

\numberwithin{equation}{section}

\textheight 615pt
\textwidth 360pt

\newcommand{\sub}{\subseteq}

\newcommand{\Z}{\mathbb{Z}}
\newcommand{\R}{\mathbb{R}}
\newcommand{\C}{\mathbb{C}}

\newcommand{\Ga}{\alpha}

\newcommand{\Gd}{\delta}
\newcommand{\Ge}{\varepsilon}

\newcommand{\Gs}{\sigma}

\numberwithin{chap}{section}
\newtheorem{thm}{Theorem}
\numberwithin{thm}{section}
\newtheorem{conj}[thm]{Conjecture}
\newtheorem{prop}[thm]{Proposition}
\newtheorem{defn}[thm]{Definition}
\newtheorem{lem}[thm]{Lemma}

\newtheorem{cor}[thm]{Corollary}

\DeclarePairedDelimiter{\norm}{\lVert}{\rVert}
\DeclarePairedDelimiter{\abs}{\lvert}{\rvert}
\DeclarePairedDelimiter{\ip}{\langle}{\rangle}

\makeatletter
\let\oldabs\abs
\def\abs{\@ifstar{\oldabs}{\oldabs*}}
\let\oldnorm\norm
\def\norm{\@ifstar{\oldnorm}{\oldnorm*}}
\let\oldip\ip
\def\ip{\@ifstar{\oldip}{\oldip*}}
\makeatother

\begin{document}

\pagestyle{myheadings} \thispagestyle{empty} \markright{}
\title{Decoupling for smooth surfaces in $\mathbb R^3$}

\author{Jianhui Li and Tongou Yang}
\address[Jianhui Li]{Department of Mathematics, University of Wisconsin-Madison, Van Vleck Hall, 480 Lincoln Drive, Madison, WI 53706, United States}

\curraddr{Department of Mathematics, Northwestern University\\
Evanston, IL 60208, United States
}
\email{jianhui.li@northwestern.edu}

\address[Tongou Yang]{Department of Mathematics, The University of British Columbia, Vancouver, B.C. V6T 1Z2, Canada}
\curraddr{Department of Mathematics, The University of California, Los Angeles, CA 90095, United States
}
\email{tongouyang@math.ucla.edu}

\date{}
\maketitle

\begin{abstract}
     For each $d\geq 0$, we prove decoupling inequalities in $\mathbb R^3$ for the graphs of all bivariate polynomials of degree at most $d$ with bounded coefficients, with the decoupling constant uniform in the coefficients of those polynomials. As a consequence, we prove a decoupling inequality for (a compact piece of) every smooth surface in $\R^3$, which in particular solves a conjecture of Bourgain, Demeter and Kemp.
\end{abstract}

\section{Introduction}

Let $\phi:[-1,1]^2\to \R$ be a smooth function. For a tiny $\Gd>0$, consider the $\Gd$-neighbourhood $\mathcal N_\Gd$ of the graph of $\phi$ over $[-1,1]^2$ in $\R^3$. Our main goal in this paper is to prove the following statements. First, there is a suitable ``partition" of $\mathcal N_\Gd$ into almost rectangular boxes $\tau$ according to the curvature of the graph of $\phi$. Second, the following decoupling inequality holds: for every test function $f$ Fourier supported in $\mathcal N_\Gd$ and every $2\le p\le 4$,
$$
\norm{f}_{L^p(\R^3)}\lesssim_{\Ge} \Gd^{-\Ge} (\#\tau)^{\frac 1 2-\frac 1 p}\left(\sum_{\tau}\norm{\mathcal R_\tau f}^p_{L^p(\R^3)}\right)^{1/p},\quad \forall \Ge>0,
$$
where $\mathcal R_\tau f$ denotes the Fourier restriction of $f$ into the cap $\tau$. A more precise version is Theorem \ref{thm_conj} below.

Before we move on to the precise formulation of our decoupling theorem, we first give a brief literature review.

\subsection{Background}
In 2015, Bourgain and Demeter \cite{BD2015} proved a seminal $\ell^2$ decoupling inequality for every compact smooth hypersurface with positive Gaussian curvature. Later in \cite{BD2017} they proved a $\ell^p$ decoupling theorem for every compact smooth hypersurface with nonzero Gaussian curvature.

The general decoupling theory for smooth hypersurfaces of vanishing Gaussian curvature is still under-developed. Yet in $\R^2$, a smooth hypersurface reduces to a smooth curve, and its decoupling theory is relatively well-understood. As an initiation of this subject, a decoupling inequality for every analytic function defined on a compact interval is proved in \cite{BGLSX} . Later in Section 12.6 of \cite{Demeter2020}, Demeter proved a slightly more refined decoupling inequality for every analytic function defined on a compact interval, with the partition chosen to be adapted to the curvature of the function. Finally, in \cite{Yang2}, the second author proved a decoupling inequality for the family of all polynomials of degree at most $d$ and with coefficients bounded by $1$, with the decoupling constant depending only on $d$ but not the individual polynomial. This, together with a brute-force Taylor approximation (which we will see in Section \ref{sec_conj} below), implies a decoupling inequality for every single smooth function defined on a compact interval, further generalising the result in \cite{Demeter2020}.

In higher dimensions, even $\R^3$, the situation becomes much more subtle. In \cite{BDK2019}, Bourgain, Demeter and Kemp proposed a conjecture. To formulate the conjecture, we first have the following definition.

\begin{defn}[$\Gd$-flatness]\label{defn_flatness}
Let $n\ge 1$ and $\phi:\R^n\to \R$ be smooth. We say $\phi$ is $\Gd$-flat over a set $S\sub \R^n$, or alternatively $S$ is $(\phi,\Gd)$-flat, if 
\begin{equation*}
    \sup_{u,v\in S} |\phi(v)-\phi(u)-\nabla \phi(u)\cdot(v-u)|\leq \Gd.
\end{equation*}
\end{defn}
For some elementary properties of $\Gd$-flatness, the reader may refer to the appendix of \cite{LiYang}. It is worth noting that $\Gd$-flat sets are also considered in \cites{BNW,NSW1993}.

\begin{conj}[Bourgain-Demeter-Kemp \cite{BDK2019}, Conjecture 7.4]\label{conj_BDK}
Given a real-analytic function $\phi:[-1,1]^2\to \R$ and $\Gd>0$, we can find a partition $\mathcal P_\Gd=\mathcal P_\Gd(\phi)$ of $[-1,1]^2$ into subsets $S$ on which $\phi$ is $\Gd$-flat. Moreover, for every $2\le p\le 4$, we have the following $\ell^p(L^p)$ decoupling inequality: for every $f:\R^3\to \C$ Fourier supported in the $\Gd$-neighbourhood of the graph of $\phi$, we have
$$
\norm {f}_{L^p(\R^3)}\leq C_{\phi,\Ge}\Gd^{-\Ge} \# \mathcal P_\Gd ^{\frac 1 2-\frac 1 p}\left(\sum_{S\in \mathcal P_\Gd}\norm{f_S}_{L^p(\R^3)}^p\right)^{\frac 1 p},\,\, \text{for every $\Ge>0$}.
$$
Here and throughout this paper, for $f:\R^3\to \C$ and $S\sub \R^2$, we denote by $f_S$ the Fourier restriction of $f$ to the strip $S\times \R$, namely, $f_S$ is defined by the relation
\begin{equation}\label{eqn_Fourier_restriction_strip}
    \widehat {f_S}(x,y,z)=\hat f(x,y,z)1_S(x,y).
\end{equation}
\end{conj}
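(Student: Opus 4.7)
The plan is to deduce Conjecture \ref{conj_BDK} from the uniform polynomial decoupling result announced in the abstract. The strategy splits into two stages: (i) approximate the given real-analytic $\phi$ by a bivariate polynomial whose degree is controlled by $\Gd$, and (ii) invoke a uniform $\ell^4(L^4)$ decoupling that is valid for all bivariate polynomials of that degree with coefficients bounded by $1$.

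For stage (i), since $\phi$ is real-analytic on $[-1,1]^2$, one can choose a degree $d=d(\phi,\Gd)=O_\phi(\log(1/\Gd))$ and a Taylor polynomial $P$ of degree $d$ such that $\norm{\phi-P}_{C^2([-1,1]^2)}\lesssim \Gd$. After normalising we may assume $P$ has coefficients of size $O(1)$. Then the $\Gd$-neighbourhood of the graph of $\phi$ lies inside the $O(\Gd)$-neighbourhood of the graph of $P$, and every $(P,\Gd)$-flat set is automatically $(\phi,O(\Gd))$-flat. Hence it suffices to construct the partition $\mathcal P_\Gd$ and establish the $\ell^4(L^4)$ inequality for $P$ with constant of the form $C_{d,\Ge}\Gd^{-\Ge}$; this transfers back to $\phi$ with constant $C_{\phi,\Ge}\Gd^{-\Ge}$ once the $d(\phi,\Gd)$ dependence is absorbed.

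For stage (ii), I would proceed by induction on the degree $d$, mirroring the curve case in \cite{Yang2} but now in two variables. Starting from $[-1,1]^2$, one partitions adaptively according to the local geometry of $P$. On subregions where the Hessian of $P$ has definite sign and controlled size, the graph is elliptic or hyperbolic at the appropriate rescaled scale and one can apply the Bourgain-Demeter theorems \cites{BD2015,BD2017}, then rescale and iterate. On subregions where the graph is essentially cylindrical, one first resolves the cross-sectional curve using the one-dimensional uniform polynomial decoupling of \cite{Yang2} and then inserts the flat direction via a cylindrical decoupling. Regions that are already $\Gd$-flat are kept as atoms of $\mathcal P_\Gd$. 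Because each decision is governed by sign conditions on polynomials derived from $P$, a Bezout-type bound limits the number of algebraic sub-pieces by a quantity depending only on $d$, not on the coefficients of $P$, which ultimately produces the uniform dependence.

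The main obstacle lies in the degenerate transition regions where the Gaussian curvature vanishes along a curve and the local model changes from point to point. Identifying such zones, choosing the correct direction along which to cylindrically decouple, and arranging the scales of the successive decompositions so that the $\Ge$-losses telescope into a single $\Gd^{-\Ge}$ factor---uniformly in the coefficients of $P$---is the technical heart of the argument. Once this uniform polynomial decoupling is in place, stage (i) yields Conjecture \ref{conj_BDK} with the stated dependence of constants.
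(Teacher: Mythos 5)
The critical gap is in your stage (i). You choose the Taylor degree $d = d(\phi,\Gd) = O_\phi(\log(1/\Gd))$, which grows without bound as $\Gd\to 0$. But the uniform polynomial decoupling gives a constant of the form $C_{d,\Ge}\Gd^{-\Ge}$, and the dependence of $C_{d,\Ge}$ on $d$ is entirely uncontrolled — the proof of the polynomial theorem (Theorem~\ref{thm_main_uniform_decoupling}) does not produce, and is not expected to produce, a bound that is subpolynomial in $\Gd$ once you substitute $d\sim\log(1/\Gd)$. So when you write that the ``$d(\phi,\Gd)$ dependence is absorbed'' into $C_{\phi,\Ge}$, that step is not justified: you would need a quantitative bound $C_{d,\Ge}\leq \Gd^{-o(1)}$ for $d\sim\log(1/\Gd)$, which is not available. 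The paper's actual reduction avoids this by keeping the degree \emph{independent of $\Gd$}: it first tiles $[-1,1]^2$ into squares $Q$ of side $\Gd^{\Ge}$ (costing an acceptable factor $\lesssim\Gd^{-2\Ge}$ in the decoupling), then approximates $\phi$ on each $Q$ by its Taylor polynomial $P_Q$ of degree $d\sim 1/\Ge$; the Taylor error on $Q$ is $\lesssim(\Gd^\Ge)^{d+1}\leq\Gd^{1+\Ge}<\Gd/2$. Since $d$ depends only on $\Ge$, the constant $C_{d,\Ge}$ is a bona fide function of $\Ge$ and folds into $C_{\phi,\Ge}$. Note also that this localization is essential for a second reason: one Taylor polynomial of moderate degree cannot approximate $\phi$ to accuracy $\Gd$ on all of $[-1,1]^2$, only on the small squares.

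Your stage (ii) is in the right spirit — induction on degree, rescaling, splitting into curved/cylindrical/flat zones, appealing to Bourgain--Demeter on curved pieces — but it glosses over the two structural ingredients that actually make the 3D argument close. First, the paper needs a ``small Hessian'' normal form (Theorem~\ref{thm_small_hessian}) showing that a degree-$d$ polynomial whose Hessian determinant has all coefficients $\leq\nu$ can, after a rotation, be written as $A(x)+\nu^{\alpha}B(x,y)$ with $A,B$ of $O(1)$ coefficients; without this, localizing to the low-curvature region does not give you a genuine gain in coefficient size when you rescale, and the induction on scales does not close. Second, the ``cross-sectional'' decoupling you invoke is not a 1D graph decoupling applied to a slice: what is needed is a generalized 2D decoupling (Theorem~\ref{thm_2D_general_uniform}) of the sublevel set $\{|P|<\Gd\}$ of a bivariate polynomial, combined via Lemma~\ref{lem_projection} with a cylindrical lift, and the fact that the 2D step is $\ell^2(L^4)$ (no cardinality factor) is what lets it be merged with the 3D $\ell^4(L^4)$ step by H\"older. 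Finally, there is a technical point you omit: because the constructed partitions initially depend on $\Ge$, one must first prove an $\Ge$-dependent version and then strip out the $\Ge$-dependence of the partition by an elementary comparison of constants, as in the paper's Theorem~\ref{thm_conj_eps}.
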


Only partial results have been established before this paper. In \cite{BDK2019}, they proved decoupling inequalities for all real-analytic surfaces of revolution in $\R^3$. Later Kemp \cite{Kemp} proved decoupling inequalities for surfaces with constantly zero Gaussian curvature but without umbilical points, and in a recent work \cite{KempJFA} he is also able to prove decoupling inequalities for a broad class of $C^5$ surfaces in $\R^3$ lacking planar points. Also recently, the authors proved in \cite{LiYang} a decoupling inequality for every mixed-homogeneous polynomial in $\R^3$. Note that none of the previous partial results implies one another.

These hypersurfaces of vanishing Gaussian curvature are also of interest in other related problems in harmonic analysis, including the restriction problem and the $L^p$ improving estimates for averages. For example, the reader may refer to \cites{CKZ2013,GMW2024,IM2016,M2014,NSW1993,Oberlin2012,PS1997,SS2021}.

\subsection{Main results}
In this article, we propose a solution to Conjecture \ref{conj_BDK}. We first state the main result.

\begin{thm}[Solution to Conjecture \ref{conj_BDK}]\label{thm_conj}
Let $\phi:[-1,1]^2\to \R$ be a smooth function. Then for every $0<\Gd<1$, there is a family $\mathcal P_\Gd=\mathcal P_\Gd(\phi)$ of rectangles $T$ covering $[-1,1]^2$, such that the following statements hold:
\begin{enumerate}
    \item $\mathcal P_\Gd$ has $\Ge$-bounded overlap in the sense that  $\sum_{T\in \mathcal P_\Gd}1_{T} \lesssim_ {\phi,\Ge}\Gd^{-\Ge}$ for every $\Ge>0$.
    \item $\phi$ is $\Gd$-flat over each $T\in \mathcal P_\Gd$.
    \item For every $2\le p\le 4$, we have the following $\ell^p(L^p)$ decoupling inequality: for any function $f:\R^3\to \C$ with Fourier support in the $\Gd$-vertical neighbourhood of the graph of $\phi$ above $[-1,1]^2$, we have
\begin{equation}\label{eqn_decoupling_l4_conj}
    \norm {f}_{L^p(\R^3)}\lesssim_{\phi,\Ge}\Gd^{-\Ge}\# \mathcal P_\Gd^{\frac 1 2-\frac 1 p}\left(\sum_{T\in \mathcal P_\Gd}\norm{f_T}_{L^p(\R^3)}^p\right)^{\frac 1 p},\,\, \text{for every $\Ge>0$}.
\end{equation}
    If, in addition, $\phi$ is convex (or concave) on $[-1,1]^2$ , then \eqref{eqn_decoupling_l4_conj} can be strengthened to the $\ell^2(L^p)$ decoupling inequality: for every $2\le p\le 4$,
\begin{equation}\label{eqn_decoupling_l2_conj}
   \norm {f}_{L^p(\R^3)}\lesssim_{\phi,\Ge}\Gd^{-\Ge}\left(\sum_{T\in \mathcal P_\Gd}\norm{f_T}_{L^p(\R^3)}^2\right)^{\frac 1 2},\,\, \text{for every $\Ge>0$}.
\end{equation}  
\end{enumerate}
\end{thm}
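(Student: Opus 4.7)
The plan is to follow the paradigm successfully deployed for the one-variable case in \cite{Yang2}: first prove a \emph{uniform} decoupling theorem for the family of all bivariate polynomials of degree at most $d$ with coefficients bounded by $1$, with the decoupling constant depending on $d$ but not the specific polynomial; then deduce the smooth case by brute-force Taylor approximation. The Taylor reduction, as announced in the abstract, proceeds as follows: for fixed smooth $\phi$ and $\delta>0$, decompose $[-1,1]^2$ into small squares $Q$ of side length $\eta=\eta(\delta)$ so that on each $Q$, $\phi$ is approximated by its Taylor polynomial $P_Q$ of degree $d=d(\delta)$ with error at most $\delta/2$; choosing $d$ so that $\eta^{d+1}\lesssim \delta$ ensures the Taylor error fits inside the $\delta$-vertical neighbourhood of the graph. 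After affine rescaling to $[-1,1]^2$, each $P_Q$ becomes a bivariate polynomial of degree $d$ with coefficients bounded in terms of $\phi$, and we apply the uniform polynomial theorem on each $Q$. Aggregating these local partitions then produces the desired global $\mathcal P_\delta$.

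The heart of the argument is the uniform polynomial statement, which I would prove by induction on the degree $d$. The base case $d=2$ reduces to decoupling for quadratic surfaces in $\mathbb R^3$ and is handled by existing results: elliptic paraboloids via \cite{BD2015}, hyperbolic paraboloids via \cite{BD2017}, and cylinders by combining cylindrical decoupling with the $1$-dimensional result of \cite{Yang2}. The inductive step builds an adapted partition into rectangles by locating the loci where one or both eigenvalues of $\nabla^2 P$ are small. On regions where $\nabla^2 P$ is comparable, at a definite scale, to a fixed-signature matrix, $P$ behaves after affine rescaling like a rescaled paraboloid and the base case applies. On thin strips around the degeneracy curves (where one eigenvalue approximately vanishes) we partition further along the near-degeneracy direction and reduce either to a cylindrical decoupling over a one-variable polynomial or, after factoring out a near-vanishing leading contribution, to a polynomial of strictly smaller degree to which the inductive hypothesis applies. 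The crucial feature that makes the induction close is that the affine rescaling of each rectangle back to the unit square keeps us within the class of degree-$d$ polynomials with coefficients bounded in terms of $d$.

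The main obstacle I anticipate is geometric bookkeeping. The rectangles $T$ must be axis-aligned (or at worst rotated by a uniformly controlled amount) because the Fourier restriction \eqref{eqn_Fourier_restriction_strip} is to vertical strips $T\times\mathbb R$, yet the natural flatness directions of $P$ are dictated by the eigenvectors of $\nabla^2 P$, which need not align with the coordinate axes. One must therefore perform linear changes of coordinates in each step of the stopping-time construction and then regroup the resulting rotated rectangles into axis-aligned ones, paying only an $\epsilon$-overlap loss. Tracking this loss so that its cumulative cost across the induction remains $\lesssim \delta^{-\epsilon}$ — the $\epsilon$-bounded overlap asserted in item (1) — is the delicate quantitative point; it is exactly the kind of pigeonholing used in \cite{LiYang} in the mixed-homogeneous setting, and I would extend that technique here.

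Finally, the strengthening to the $\ell^2(L^4)$ inequality \eqref{eqn_decoupling_l2_conj} in the convex or concave case should follow because, in that setting, the Gaussian curvature has a definite sign throughout $[-1,1]^2$, so each call to the base case invokes the genuine $\ell^2(L^4)$ elliptic paraboloid decoupling of \cite{BD2015} rather than the weaker $\ell^4(L^4)$ hyperbolic result of \cite{BD2017}. The obstruction to $\ell^2$ in the general case is precisely that the Hessian of $\phi$ may change signature across degeneracy curves, which is what forces the $\#\mathcal P_\delta^{1/4}$ loss in \eqref{eqn_decoupling_l4_conj}.
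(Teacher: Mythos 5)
Your outer layer is correct and matches the paper: prove a uniform decoupling theorem for all bivariate polynomials of degree at most $d$ with coefficients bounded by $1$, then reduce the smooth case to this by tiling $[-1,1]^2$ into squares of side length $\delta^\varepsilon$, replacing $\phi$ on each square by a degree-$\lceil\varepsilon^{-1}\rceil$ Taylor polynomial with error $O(\delta^{1+\varepsilon})$, and rescaling. Your remark about the convex case also lines up with what the paper does (convexity is preserved under affine maps and forces the elliptic Bourgain--Demeter input in every curved block).

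The genuine gap is in the heart of your argument: you propose to prove the uniform polynomial theorem by \emph{induction on the degree} $d$, with the inductive step supposedly ``factoring out a near-vanishing leading contribution'' to reach a polynomial of strictly smaller degree. This does not close. Restricting $\phi$ to a region where $\det D^2\phi$ is small and affinely rescaling that region to $[-1,1]^2$ produces \emph{another} polynomial of the same degree $d$, as you yourself note (``the affine rescaling...keeps us within the class of degree-$d$ polynomials''). There is no general procedure that reduces degree here; for instance $\phi(x,y)=x^2+(2+\nu)xy+y^2$ has globally small Hessian determinant but is a full degree-$2$ polynomial with $O(1)$ coefficients in every affine frame, and $\phi(x,y)=\delta x^4 - y^4$ has small Hessian determinant on thin vertical strips yet stays quartic after rescaling. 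The paper's mechanism is entirely different: it runs an \emph{induction on scale}, not degree. To make that induction close it needs the Small Hessian Theorem: if all coefficients of $\det D^2\phi$ are bounded by $\nu$, then after a suitable \emph{rotation} one can write $\phi\circ\rho(x,y)=A(x)+\nu^\alpha B(x,y)$ for polynomials $A,B$ with $O(1)$ coefficients and $\alpha=\alpha(d)>0$. This rotation and the resulting cylindrical decoupling at scale $\nu^\alpha=M^{-\alpha}$ is what produces, after one more rescaling, a polynomial with coefficients $O(M^{-\alpha})$, so that the problem at scale $\delta$ is reduced to the problem at the coarser scale $M^\alpha\delta$ — a genuine scale gain. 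Without this structural input, there is no gain to feed any induction, and your ``factoring out'' step has no concrete mechanism behind it.

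A second, related omission: you never mention projecting to $\mathbb R^2$. The paper's argument crucially first decouples the sublevel set $\{|\det D^2\phi|<M^{-1}\}\subset\mathbb R^2$ into boundedly overlapping rectangles (Theorem~\ref{thm_2D_general_uniform}, which \emph{is} proved by induction on degree — but the degree of $\det D^2\phi$, a lower-dimensional problem), and then lifts this 2D decoupling to $\mathbb R^3$ by the projection lemma. Your proposal tries to do everything directly in $\mathbb R^3$ by chasing eigenvalue loci, and the geometric bookkeeping you flag as your ``main obstacle'' is not a side issue — it is exactly what the 2D projection and the Small Hessian rotation are designed to make tractable. As written, your inductive step will not terminate and the proof does not go through.
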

Note that this theorem is slightly different from Conjecture \ref{conj_BDK}. First, Theorem \ref{thm_conj} works for a smooth function which may not necessarily be analytic, so our result is stronger in this sense. Also, we have \eqref{eqn_decoupling_l2_conj} in the convex case, which was not explicitly mentioned in \cite{BDK2019}.

Second, instead of constructing a strict partition of $[-1,1]^2$ into suitable subsets, we construct a $\Ge$-boundedly overlapping family of rectangles covering $[-1,1]^2$. We choose to use rectangles mainly because they are easier to deal with, and more importantly, rectangles are the main objects of concern in restriction and Kakeya type problems. The $\Ge$-bounded overlap would not do any harm since in \eqref{eqn_decoupling_l4_conj} we already lose a factor of the form $C_{\phi,\Ge}\Gd^{-\Ge}$, and this is a common feature in decoupling inequalities (in some very special case the $\Ge$-loss can be improved; see \cite{GMW2024}). Nevertheless, a slight modification of the proof of Theorem \ref{thm_conj} also gives a strict partition of $[-1,1]^2$ into connected subsets $S$. As can be seen in the proof, the only source of overlapping is the enlargement of certain sets to rectangles. These sets are the intersections of sublevel sets of some polynomials and parallelograms. We can instead keep these sets, which form a disjoint partition of the set we want to decouple. By doing so in every iteration, we get disjoint connected subsets $S$ whose boundaries are algebraic curves. In particular, Conjecture \ref{conj_BDK} holds.

Heuristically, the main geometric reason for Theorem \ref{thm_conj} to hold is that $\mathcal P_\Gd$ is a collection of ``maximal" $(\phi,\Gd)$-flat rectangles $T$, meaning that $MT$ is not $(\phi,\Gd)$-flat if $M$ is significantly larger than an absolute constant. If $\phi$ is convex, then $\mathcal P_\Gd$ is essentially unique, and thus a ``maximum" collection; see \cites{BNW,NSW1993}. If $\phi$ is non-convex, then there are significantly different ``maximal" collections $\mathcal P_\Gd$ but not all of them give rise to a $\ell^2(L^p)$ decoupling inequality as in Theorem \ref{thm_conj}. A typical example is when $\phi(x,y)=xy$ and $\mathcal P_\Gd$ is a tiling of $[-1,1]^2$ by axis-parallel rectangles of dimensions $\Gd\times 2$, which are ``maximal" but for which \eqref{eqn_decoupling_l4_conj} fails to hold except for the trivial case $p=2$, as can be seen by taking the test function $f$ to be the characteristic function of the $\delta$-neighborhood of the $x$-axis.

A close cousin of Theorem \ref{thm_conj} is the following uniform decoupling inequality for all bivariate polynomials of degree at most $d$ and with coefficients bounded by $1$.
\begin{thm}[Uniform decoupling theorem]\label{thm_main_uniform_decoupling}
Let $d\geq 0$. Then for every polynomial $\phi:\R^2\to \R$ of degree at most $d$ and with coefficients bounded by $1$, for every $0<\Gd<1$, there is a family $\mathcal P_\Gd=\mathcal P_\Gd(\phi,d)$ of rectangles $T$ covering $[-1,1]^2$ such that the following statements hold:
\begin{enumerate}
    \item $\mathcal P_\Gd$ has $\Ge$-bounded overlap in the sense that $\sum_{T\in \mathcal P_\Gd}1_{T} \lesssim_{d,\Ge}\Gd^{-\Ge}$ for every $\Ge>0$.
    \item $\phi$ is $\Gd$-flat over each $T\in \mathcal P_\Gd$.
    \item For every $2\le p\le 4$, we have the following $\ell^p(L^p)$ decoupling inequality: for any function $f:\R^3\to \C$ with Fourier support in the $\Gd$-vertical neighbourhood of the graph of $\phi$ above $[-1,1]^2$, 
\begin{equation}\label{eqn_decoupling_l4}
    \norm {f}_{L^p(\R^3)}\lesssim_{d,\Ge}\Gd^{-\Ge}\# \mathcal P_\Gd^{\frac 1 2-\frac 1 p}\left(\sum_{T\in \mathcal P_\Gd}\norm{f_T}_{L^p(\R^3)}^p\right)^{\frac 1 p},\,\, \text{for every $\Ge>0$}.
\end{equation}
If, in addition,  $\phi$ is convex (or concave) on $[-1,1]^2$, then \eqref{eqn_decoupling_l4} can be strengthened to the $\ell^2(L^p)$ decoupling inequality:
\begin{equation}\label{eqn_decoupling_l2}
   \norm {f}_{L^p(\R^3)}\lesssim_{d,\Ge}\Gd^{-\Ge}\left(\sum_{T\in \mathcal P_\Gd}\norm{f_T}_{L^p(\R^3)}^2\right)^{\frac 1 2},\,\, \text{for every $\Ge>0$}.
\end{equation} 
\end{enumerate}
The implicit constants depend on $d,\Ge$ but not $\phi,\Gd,f$.
\end{thm}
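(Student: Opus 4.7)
We proceed by strong induction on the degree $d$. The cases $d\le 1$ are trivial since then $\phi$ is affine on $[-1,1]^2$, hence $0$-flat, and the single rectangle $[-1,1]^2$ witnesses \eqref{eqn_decoupling_l2}. For $d\ge 2$ we assume the theorem for every polynomial of degree at most $d-1$ (with constants depending only on $d-1$).

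The main object of study is the Hessian determinant
\[
H_\phi := \phi_{xx}\phi_{yy}-\phi_{xy}^{\,2},
\]
a polynomial of degree at most $2d-4$ whose coefficients are $O_d(1)$. The first step is to perform a dyadic decomposition of $[-1,1]^2$ according to the sizes $|H_\phi|\sim 2^{-k}$ together with a secondary pigeonholing fixing the individual dyadic scales of $\phi_{xx},\phi_{yy},\phi_{xy}$, and hence the scales of the two eigenvalues of $\nabla^2\phi$. The logarithmic losses incurred are absorbed into the $\delta^{-\Ge}$ factor. This splits $[-1,1]^2$ into a \emph{non-degenerate} region, where both eigenvalues of $\nabla^2\phi$ are bounded below by a definite threshold, and a \emph{degenerate} region contained in the sublevel set $\{|H_\phi|\lesssim \mu\}$ for a suitable $\mu=\mu(\delta,d)$.

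On each non-degenerate piece the graph has nonzero Gaussian curvature of a known anisotropic scale; we cover it by affine caps whose dimensions are matched to the two principal-curvature scales, then after an anisotropic parabolic rescaling that normalizes the surface to one with unit-size Hessian we apply the Bourgain-Demeter decoupling theorem \cites{BD2015,BD2017}. In the convex case the Hessian is positive definite, Bourgain-Demeter delivers the sharper $\ell^2(L^4)$ bound, and convexity is preserved by the affine rescaling. For the degenerate region, we use the polynomial sublevel-set covering technology developed in \cite{LiYang} (ultimately going back to Wongkew's theorem) to cover $\{|H_\phi|\lesssim\mu\}$ by an $\Ge$-boundedly overlapping family of rectangles. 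On each such rectangle, after an affine change of variables normalizing it to $[-1,1]^2$, either the rescaled $\phi$ is already $\delta$-flat and we stop; or the second-order variation is essentially one-dimensional along the near-tangent direction to $\{H_\phi=0\}$, in which case a cylindrical decoupling reduces the problem to Yang's uniform one-variable decoupling theorem \cite{Yang2}; or a further coordinate change produces a rescaled polynomial of strictly smaller effective degree, so that the inductive hypothesis applies.

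The principal obstacle will be closing the induction in the degenerate region: one must verify that after the sublevel covering and each rescaling, every rectangle is either $\delta$-flat or reducible to a strictly simpler decoupling problem (lower degree, or one-dimensional via \cite{Yang2}), and that the multiplicative losses accumulated throughout the recursion remain bounded by $C_{d,\Ge}\delta^{-\Ge}$. A second delicate point is ensuring convexity is preserved under every rescaling so that the sharper $\ell^2(L^4)$ inequality \eqref{eqn_decoupling_l2} propagates through the induction rather than degenerating to the $\ell^4(L^4)$ bound \eqref{eqn_decoupling_l4}.
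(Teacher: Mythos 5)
Your overall silhouette is right -- split by the size of $\det D^2\phi$, decouple the non-degenerate part with Bourgain--Demeter, and treat the degenerate part by projecting to a sublevel set in $\R^2$, rescaling, and applying a cylindrical decoupling -- but the two ideas that actually make the induction close are missing, and one of your proposed inductive steps does not exist. The fundamental problem is that strong induction on the \emph{degree} cannot close: after covering $\{|\det D^2\phi|\lesssim M^{-1}\}$ by rectangles and affinely normalizing one of them to $[-1,1]^2$, the rescaled polynomial $\tilde\phi$ still has degree $d$ and generically has $O(1)$ coefficients; the degree does not drop, so there is no "strictly smaller effective degree" case. The paper inducts on the \emph{scale} $\delta$ instead. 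The engine is the Small Hessian Theorem (Theorem \ref{thm_small_hessian}): if $\det D^2\tilde\phi$ has all coefficients $\lesssim M^{-1}$, then after a rotation $\rho$ one has $\tilde\phi\circ\rho(x,y)=A(x)+M^{-\alpha}B(x,y)$ with $A,B$ of $O(1)$ coefficients. A cylindrical decoupling on $A$ alone at scale $M^{-\alpha}$, followed by a second rescaling, yields a bounded-coefficient degree-$d$ polynomial that must be decoupled at the \emph{coarser} scale $M^{\alpha}\delta$; this multiplicative gain in $\delta$, not a drop in degree, is what closes the iteration. Without something like the Small Hessian Theorem, the phrase "the second-order variation is essentially one-dimensional" has no quantitative content and the argument stalls.

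The second serious gap is the 2D covering. You cannot take the decoupled cover of $\{|H_\phi|\lesssim\mu\}$ "from \cite{LiYang}" or from Wongkew's theorem: Wongkew gives only measure bounds, and \cite{LiYang} treats the special case in which the sublevel set is a neighbourhood of a graph. For a general bivariate polynomial the needed statement is the paper's Theorem \ref{thm_2D_general_uniform}: a cover of $\{(x,y)\in[-1,1]^2:|P(x,y)|<\delta\}$ by boundedly overlapping rectangles on which $|P|\lesssim\delta$, together with an $\ell^2(L^4)$ decoupling in $\R^2$ into that cover. That is a genuinely new and substantial result; its proof occupies Section \ref{sec_2D_uniform} and runs via a degree induction on $\nabla P$ (dyadically splitting $|P_x|,|P_y|$), a rescaling to reach the regime $|P_y|\gtrsim 1$, and a Pramanik--Seeger iteration approximating the implicit curve $\{P=0\}$ by rational functions and appealing to \cite{Yang2}. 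You are invoking this as a black box that does not yet exist. Two smaller points: the convexity propagation is handled by proving the $\ell^2(L^4)$ estimate under the relaxed hypothesis $\det D^2\phi>-\delta^2$ (as in Theorem \ref{thm_main_uniform_decoupling_eps} and Section \ref{sub_convex}), because exact convexity is lost by tiny amounts after Taylor approximation and rescaling; and the secondary pigeonhole on the individual sizes of $\phi_{xx},\phi_{yy},\phi_{xy}$ with anisotropic caps is unnecessary, since on the non-degenerate part one can apply Bourgain--Demeter directly at scale $\delta$ and absorb the $M$-dependence by choosing $M=M(\varepsilon)$.
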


In Theorem 2.1 of \cite{KempJFA}, Kemp formulated a mixed $\ell^2$ and $\ell^p$ decoupling inequality adapted to positively curved and negatively curved parts respectively. Indeed, our method also naturally gives rise to such kind of inequality. In particular, we will show the $\ell^2(L^p)$ decoupling inequality with slightly relaxed condition $
\det D^2 \phi > -\delta^2$ on $[-1,1]^2$. This is crucial to the proof of the $\ell^2(L^p)$ decoupling for convex smooth functions as it allows certain errors in the Hessian determinant after the Taylor approximation. See Sections \ref{sub_Taylor} and \ref{sub_convex}.

The vast majority of this article is devoted to the proof of Theorem \ref{thm_main_uniform_decoupling}.

\subsection{Main idea of proof}
In what follows, we outline the proof of the case $p=4$, the other cases $2\leq p < 4$ follows similarly.

The following results of Bourgain and Demeter will serve as the cornerstone of our argument.
\begin{thm}[Bourgain-Demeter, \cites{BD2015,BD2017}\label{thm_Bourgain_Demeter}]
Let $\phi:[-1,1]^2\to \R$ be a $C^3$-function with nonsingular Hessian, and suppose $\norm{\phi}_{C^3}\leq C_0$. For every $0<\Gd<1$, denote by $\mathcal P_\Gd$ a tiling of $[-1,1]^2$ by squares $T$ of side length $\Gd^{1/2}$. Then for any $2\leq p \leq 4$, any function $f:\R^3\to \C$ with Fourier support in the $\Gd$-vertical neighbourhood of the graph of $\phi$ above $[-1,1]^2$, we have
\begin{equation}\label{eqn_BD_decoupling_l4}
    \norm {f}_{L^p(\R^3)}\lesssim_{C_0,\Ge}\Gd^{-\Ge}\# \mathcal P_\Gd^{\frac{1}{2} - \frac{1}{p}}\left(\sum_{T\in \mathcal P_\Gd}\norm{f_T}_{L^p(\R^3)}^p\right)^{\frac 1 p},\,\, \text{for every $\Ge>0$}.
\end{equation}
If, in addition,  $\phi$ is convex (or concave) on $[-1,1]^2$, then \eqref{eqn_BD_decoupling_l4} can be strengthened to
\begin{equation}\label{eqn_BD_decoupling_l2}
    \norm {f}_{L^p(\R^3)}\lesssim_{C_0,\Ge}\Gd^{-\Ge}\left(\sum_{T\in \mathcal P_\Gd}\norm{f_T}_{L^p(\R^3)}^2\right)^{\frac 1 2},\,\, \text{for every $\Ge>0$}.
\end{equation}
\end{thm}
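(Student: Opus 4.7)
The plan is to reduce the statement to the standard paraboloid model and then run the Bourgain--Demeter bootstrap. First I would absorb the affine Taylor polynomial of $\phi$ at the centre of $[-1,1]^2$ by a harmless modulation of $\hat f$ (which does not affect either side of the decoupling inequality) and diagonalise the Hessian $D^2\phi(0)$ by an orthogonal change of frequency variables. Because $\norm{\phi}_{C^3}\le C_0$ and $|\det D^2\phi|\gtrsim 1$ throughout the domain, this linear change of variables together with the remaining cubic Taylor error distorts $\Gd^{1/2}$-squares and the $\Gd$-vertical neighbourhood only by bounded factors depending on $C_0$. Hence it suffices to prove \eqref{eqn_BD_decoupling_l4} for the hyperbolic paraboloid $\phi(x,y)=\tfrac12(x^2-y^2)$ and \eqref{eqn_BD_decoupling_l2} for the elliptic paraboloid $\phi(x,y)=\tfrac12(x^2+y^2)$.

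Once the phase is a pure quadratic form, both sides of the inequality are invariant under the parabolic rescaling $(x,y,z)\mapsto(\lambda^{-1}x,\lambda^{-1}y,\lambda^{-2}z)$. Writing $D_p(\Gd)$ for the best constant in the relevant inequality, I would partition $[-1,1]^2$ into intermediate squares $\tau$ of side $\Gd^{1/2}$, apply $D_p(\Gd^{1/2})$ at the outer scale, parabolically rescale each $\tau$ back to the unit square so that its $\Gd$-neighbourhood becomes the $1$-neighbourhood of the rescaled paraboloid, and apply $D_p(\Gd^{1/2})$ once more inside. Together with the trivial pigeonhole bound, this recursion formally gives $D_p(\Gd)\lesssim_\Ge \Gd^{-\Ge}$, provided one has at each scale a nontrivial improvement over the trivial estimate.

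That improvement is supplied by the multilinear Kakeya inequality of Bennett--Carbery--Tao, inserted via the Bourgain--Guth broad/narrow dichotomy: decompose $\R^3$ into a broad part, where the contributing caps have pairwise transverse normals so that multilinear Kakeya applies directly, and a narrow part, where the caps cluster near a lower-dimensional variety and are controlled by a lower-dimensional decoupling combined with the inductive hypothesis at a coarser scale. For the elliptic paraboloid, transversality is automatic from positive definiteness and genuine $L^2$ orthogonality yields the stronger $\ell^2(L^4)$ statement \eqref{eqn_BD_decoupling_l2}. The hard part, and the main distinction between \cite{BD2015} and \cite{BD2017}, is the hyperbolic case: the saddle is doubly ruled, so Knapp examples aligned along a ruling kill any hope of $\ell^2$ and force $p=4$, and one must show that the narrow contributions concentrated along the two families of rulings can still be absorbed into one-parameter families of cylindrical lower-dimensional decouplings, so that the broad/narrow induction closes with the weaker $\ell^4(L^4)$ exponent.
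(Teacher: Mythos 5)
The paper does not reprove this theorem; it is a citation to the two Bourgain--Demeter papers, with a remark that the $\ell^4$ inequality \eqref{eqn_BD_decoupling_l4} for a $C^3$ surface follows from Theorem~1.1 of \cite{BD2017} together with a Pramanik--Seeger type iteration \cite{PS2007}, and that the uniformity of the constant in $C_0$ is deducible from those proofs (see Section~5 of \cite{Yang2} for the 2D analogue). Your outline attempts a genuine proof from scratch, so it is a different undertaking, but its opening step is wrong.

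Specifically, you claim that after subtracting the affine Taylor polynomial at the origin and diagonalising $D^2\phi(0)$, ``the remaining cubic Taylor error distorts $\Gd^{1/2}$-squares and the $\Gd$-vertical neighbourhood only by bounded factors depending on $C_0$.'' That is false. At distance $r$ from the centre of $[-1,1]^2$, the third-order remainder is $O_{C_0}(r^3)$; across the whole domain this is $O_{C_0}(1)$, not $O(\Gd)$. Consequently $\mathcal N^\phi_\Gd([-1,1]^2)$ is \emph{not} contained in $\mathcal N^{Q}_{C\Gd}([-1,1]^2)$ for the quadratic $Q$, and one cannot reduce the $C^3$ case to a single paraboloid/saddle model by a one-shot Taylor approximation plus affine change of variables. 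There is also no way to ``modulate away'' the cubic error: modulation only removes an additive affine phase, not a genuinely nonlinear one. The correct mechanism is exactly the multi-scale Pramanik--Seeger iteration the paper alludes to: decouple down to $\Gd^{1/2}$-squares, observe that on each such square the cubic error is $O(\Gd^{3/2})\ll\Gd$ so the quadratic model is accurate \emph{there}, rescale the square back to unit size so that the Taylor error improves by a factor $\Gd^{1/2}$, re-Taylorise, and iterate; curvature nondegeneracy is used precisely to keep the rescaled Hessian uniformly nondegenerate. Your parabolic-rescaling recursion and the broad/narrow multilinear-Kakeya mechanism are the right ingredients for the \emph{quadratic} models, but you have used them only inside the bootstrap for the model and not where they are actually first required, namely to pass from an arbitrary $C^3$ phase to the model. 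As written, the proposal has a gap at its very first reduction.
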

The $\ell^2(L^p)$-inequality \eqref{eqn_BD_decoupling_l2} follows from Section 7 of \cite{BD2015} when $n=3$. The $\ell^p(L^p)$-inequality \eqref{eqn_BD_decoupling_l4} follows from Theorem 1.1 (see also Remark 2.3)  of \cite{BD2017} when $n=3$, combined with a standard Pramanik-Seeger type iteration (see \cite{PS2007}). The fact that the implicit constants only depend on $C_{0},\Ge$ but not individual choice of $\phi$ is not explicitly stated in \cites{BD2015,BD2017} but can be deduced from the proofs. (See Section 5 of \cite{Yang2} for a rigorous proof in the 2D case; higher dimensional cases are the same.)

\subsubsection{Passing to polynomials and projecting to $\R^2$.}
The reason we prove Theorem \ref{thm_main_uniform_decoupling} first is that polynomials have nice algebraic properties that can be utilised.  As described in Section \ref{sec_conj}, we use a trivial decoupling and Taylor approximation of degree $1/\varepsilon$ to reduce the problem to polynomial cases. Given a bivariate polynomial $\phi$, in view of Theorem \ref{thm_Bourgain_Demeter}, the major difficulty of Conjecture \ref{conj_BDK} is the presence of the vanishing set of $\det D^2 \phi$. In the cases considered in \cite{KempJFA} and \cite{LiYang}, $\{|\det D^2 \phi|<\delta\}$ is a neighborhood of the graph of some polynomial. Both papers used projections to decouple such sets in the setting of $\R^2$. Notably, these kinds of projection arguments are used repeatedly in similar decoupling problems, e.g. \cites{BD2015,BDG2016,GOZZ,GZ2020}. Although a projection forgets certain geometry of the surface, the decoupling of the projected sets, as a subset of $\R^2$, is easier to study. See \cites{BGLSX,Demeter2020,Yang2}.

\subsubsection{Generalised 2D uniform decoupling.}
For now, we assume $\phi$ to be a polynomial of degree at most $d$ and coefficients bounded by $1$, and let $P = \det D^2 \phi$. As mentioned above, we first decouple the projected sets $\{|P| <\delta\} \subset \R^2$ into rectangles. We call this generalised 2D uniform decoupling. The special case where $|P_y| \gtrsim 1$ is more manageable because it can be approximated by polynomial-like functions locally. In fact, by Pramanik-Seeger type iteration in \cite{PS2007} and uniform decoupling on polynomial-like functions in \cite{Yang2}, we first prove this special case. To prove the general case, we need to dyadically decompose $\nabla P$, which is one degree lower than $P$. One may assume by induction on the degree of the polynomials that the dyadic set $|P_x|\sim \sigma $ can be decoupled into rectangles, similar for $P_y$. Then, a certain invertible linear map sending these rectangles to the unit square allows one to reduce the problem to the special case $|P_y|\gtrsim 1$. We then conclude by using the decoupling result for the special case.

\subsubsection{Small Hessian determinant and cylindrical decoupling.}
By generalised 2D uniform decoupling, we are able to localize $\phi$ to rectangles with small Hessian determinant. Rescaling these rectangles to the unit square and we are left with decoupling a rescaled function which we denote by $\Tilde{\phi}$. In the case of 2D decoupling for polynomials as in \cite{Yang2}, this would be already enough to conclude an induction on scale argument because then the rescaled function automatically has small coefficients. However, in the case of bivariate polynomial, it may happen that the Hessian determinant of $\tilde{\phi}$ is $\lesssim M^{-1} \ll 1$ over the unit square (for some large $M$ to be determined), but some coefficient of $\tilde \phi$ has magnitude $\sim 1$. To resolve this, we prove a proposition in Section \ref{sec_small_hessian} that says such polynomials admit the form $$\tilde{\phi} \circ \rho = A(x) + M^{-\alpha}B(x,y)y$$
for some rotation $\rho$, $\alpha=\alpha(d) \in (0,1)$ and polynomials $A$, $B$ with $O_d(1)$ coefficients.

Now, applying a cylindrical decoupling to $\Tilde{\phi} \circ \rho$ at the scale $M^{-\alpha}$, we get $(\Tilde{\phi},M^{-\alpha})$-flat rectangles. Thus, by rescaling these rectangles to the unit square, we get polynomials (denoted by $\bar{\phi}$) with $O_d(M^{-\alpha})$ coefficients. 

\subsubsection{Induction on scales} We then proceed by induction on the scale $\delta$ and decouple $M^{\alpha} \bar{\phi}$ into $(M^{\alpha} \bar{\phi},M^\alpha \delta)$-flat rectangles. Equivalently, these rectangles are $(\bar{\phi},\delta)$-flat. Rescaling back, we get a decoupling of the region with small Hessian determinant into $(\phi,
\delta)$-flat rectangles. The part with large Hessian determinant can be decoupled using Theorem \ref{thm_Bourgain_Demeter}, with decoupling constant $C_{\varepsilon,M}M^{\varepsilon}$. By choosing $M=M(\varepsilon)$ large enough, we conclude Theorem \ref{thm_main_uniform_decoupling}.

\subsection{Open problems}
With Theorems \ref{thm_conj} and \ref{thm_main_uniform_decoupling}, we would also like to ask the following questions.
\begin{enumerate}
    \item Can we improve the $\Ge$-bounded overlap to $C_\phi$ (or $C_d$)-bounded overlap? We believe our method actually gives this conclusion, but it would require much more detailed analysis.
   
    \item Higher dimensional generalisations. Can we formulate and prove versions of Theorems \ref{thm_conj} and \ref{thm_main_uniform_decoupling} in $\R^n$ for $n\ge 4$? For the previous open problem we believe there are just some technical difficulties, but for this open problem our current decoupling analysis fails substantially. There are many difficulties we have to face, but the major problem lies in an upgrade of Section \ref{sub_combining_decoupling}, where we applied H\"older's inequality to combine a 2D lower dimensional $\ell^2(L^p)$ decoupling at an intermediate scale and a $\ell^4(L^p)$ decoupling at a finer scale in 3D to obtain a final $\ell^4(L^p)$ decoupling inequality. The fact that there is no cardinality term $\#\mathcal P_\Gd$ in $\ell^2(L^p)$ decoupling inequalities is the key savior of our argument. In the passage from 3D to 4D, this in general will require us to combine a 3D lower dimensional $\ell^4(L^{4})$ (or perhaps $\ell^4(L^{10/3})$) decoupling at an intermediate scale and a $\ell^{10/3}(L^{10/3})$ decoupling at a finer scale in 4D; however, since in $\ell^4(L^{4})$ decoupling inequalities in 3D we inevitably have a cardinality term $\#\mathcal P_\Gd$ (see \eqref{eqn_decoupling_l4}), this will make the combination much more subtle. 
    
    Another difficulty will be the higher dimensional analogues of Theorem \ref{thm_small_hessian} and Proposition \ref{prop_delta_nbhd}, as singularities of polynomials in higher dimensions can be so bad that much more refined algebraic geometry may be needed.
\end{enumerate}

\subsection{Notation.}
\begin{enumerate}

\item \label{notation_vertical_nbhd} For any set $A\sub \R^2$ (or $A\sub \R$), any continuous function $\phi:A\to \R$ and any $\Gd>0$, denote the (vertical) $\Gd$-neighbourhood of the graph of $\phi$ above $A$ by
$$
\mathcal N^\phi_{\Gd}(A)=\{(x,y,z):(x,y)\in A, |z-\phi(x,y)|<\Gd\},
$$
(with the obvious modification for $A\sub \R$). 

\item In addition to the notation $f_S$ introduced in \eqref{eqn_Fourier_restriction_strip}, if $S$ lives in the same Euclidean space as the domain of $f$ does, we write $\mathcal R_S f$ to be the Fourier restriction of $f$ to $S$, namely, $\widehat {\mathcal R_S f}(x)=\hat f (x)1_S(x)$.

\item \label{notation_equivalent} Let $A\sub \R^2$ be a parallelogram and $B\sub \R^2$. $B$ is said to be {\it equivalent} to $A$ if for some absolute constant $C\ge 1$ we have $C^{-1}A\sub B\sub CA$, where $CA$ means the dilation of $A$ by a factor of $C$ with respect to the centre of $A$.

\item We use the standard notation $a=O_C(b)$, or $|a|\lesssim_C b$ to mean there is a constant $C$ such that $|a|\leq Cb$. In many cases, such as results related to polynomials, the constant $C$ will be taken to be absolute or depend on the polynomial degree only, and so we may simply write $a=O(b)$ or $|a|\lesssim b$.

\item For a polynomial, we use the term ``linear terms" to denote the sum of all its terms of degree at most $1$.

\end{enumerate}

{\it Outline of the paper.} In Section \ref{sec_conj} we use an elementary argument and a simple brute-force Taylor approximation to reduce Theorem \ref{thm_conj} to a superficially weaker version of Theorem \ref{thm_main_uniform_decoupling}, Theorem \ref{thm_main_uniform_decoupling_eps}. In Section \ref{sec_ingredients}, we state the main ingredients we will need to prove Theorem \ref{thm_main_uniform_decoupling_eps}. In Section \ref{sec_2D_uniform}, we prove the first and the most important ingredient, the generalised 2D uniform decoupling for polynomials. We then proceed to prove Theorem \ref{thm_main_uniform_decoupling_eps} in Section \ref{sec_proof_main} and postpone the calculations of some elementary facts about small Hessian determinants to Section \ref{sec_small_hessian}. Lastly, Section \ref{sec_appendix} is an appendix, where we state a simple property of polynomials we use extensively in this article.

{\it Acknowledgement.} The first author would like to thank his advisor Betsy Stovall for her advice and help throughout the project. He would also like to thank Po Lam Yung for his invaluable guidance.

The second author would like to thank Malabika Pramanik, Joshua Zahl and Po Lam Yung for their guidance and kindness throughout the past years. 

Both authors would like to thank Jaume de Dios, Shaoming Guo, D\'ominique Kemp, Terence Tao and Hong Wang for useful discussions related to this article. We also especially thank Betsy Stovall and Joshua Zahl for suggesting the Taylor approximation and the removal of $\Ge$-dependence of partitions in Section \ref{sec_conj}, respectively. We would like to thank the anonymous referee for their careful reading of our manuscript and their insightful suggestions. The first author was partially supported by NSF DMS-1653264.

\section{Proof of Conjecture}\label{sec_conj}

The ultimate goal of this section is the proof of Theorem \ref{thm_conj}, and thus Conjecture \ref{conj_BDK}.

\subsection{$\Ge$-dependent partitions}
For technical reasons, we will first consider the following superficially weaker versions of Theorems \ref{thm_conj} and \ref{thm_main_uniform_decoupling} when the partitions depend on $\Ge$ as well.

\begin{thm}[Theorem \ref{thm_conj} with $\Ge$-dependent partitions]\label{thm_conj_eps}
Let $\phi:[-1,1]^2\to \R$ be a smooth function. Then for every $\Ge>0$ and every $0<\Gd<1$, there is a family $\mathcal P_\Gd=\mathcal P_\Gd(\phi,\Ge)$ of rectangles $T$ covering $[-1,1]^2$, such that the following statements hold:
\begin{enumerate}
    \item $\mathcal P_\Gd$ has $\Ge$-bounded overlap in the sense that $\sum_{T\in \mathcal P_\Gd}1_{T} \lesssim_{\phi,\Ge}\Gd^{-\Ge}$.
    \item $\phi$ is $\Gd$-flat over each $T\in \mathcal P_\Gd$.
    \item We have the following $\ell^p(L^p)$ decoupling inequality: for any function $f:\R^3\to \C$ with Fourier support in the $\Gd$-vertical neighbourhood of the graph of $\phi$ above $[-1,1]^2$, we have
\begin{equation}\label{eqn_decoupling_l4_conj_eps}
    \norm {f}_{L^p(\R^3)}\lesssim_{\phi,\Ge}\Gd^{-\Ge}\# \mathcal P_\Gd^{\frac 1 2- \frac 1 p}\left(\sum_{T\in \mathcal P_\Gd}\norm{f_T}_{L^p(\R^3)}^p\right)^{\frac 1 p}.
\end{equation}
If, in addition,  $\phi$ is convex (or concave) on $[-1,1]^2$, then \eqref{eqn_decoupling_l4} can be strengthened to the $\ell^2(L^p)$ decoupling inequality:
\begin{equation}\label{eqn_decoupling_l2_conj_eps}
   \norm {f}_{L^p(\R^3)}\lesssim_{\phi,\Ge}\Gd^{-\Ge}\left(\sum_{T\in \mathcal P_\Gd}\norm{f_T}_{L^p(\R^3)}^2\right)^{\frac 1 2}.
\end{equation} 
\end{enumerate}
\end{thm}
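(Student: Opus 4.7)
The plan is to reduce Theorem~\ref{thm_conj_eps} to the polynomial statement in Theorem~\ref{thm_main_uniform_decoupling_eps} by a piecewise Taylor approximation. Fix $\phi\in C^\infty([-1,1]^2)$ and $\varepsilon,\delta>0$. Choose a large integer $d=d(\varepsilon)$, say $d=\lceil 10/\varepsilon\rceil$, and a scale $r=c_\phi\,\delta^{\varepsilon/4}$ with $c_\phi$ small enough depending on $\|\phi\|_{C^{d+1}}$, then partition $[-1,1]^2$ into roughly $N=r^{-2}\lesssim_{\phi,\varepsilon}\delta^{-\varepsilon/2}$ congruent squares $Q_j$ of side $r$ centered at $p_j$. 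Let $P_j$ be the degree-$d$ Taylor polynomial of $\phi$ at $p_j$, so on $Q_j$ Taylor's theorem gives
\[
|\phi-P_j|\lesssim_\phi r^{d+1}\le \delta,\qquad |\nabla(\phi-P_j)|\lesssim_\phi r^d\le \delta,
\]
which, via Definition~\ref{defn_flatness}, imply both (i) $\mathcal N_\delta^\phi(Q_j)\subset \mathcal N_{O(\delta)}^{P_j}(Q_j)$, and (ii) that $(P_j,\delta)$-flatness of any subset $S\subseteq Q_j$ upgrades to $(\phi,O(\delta))$-flatness of $S$.

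Next I would rescale each $Q_j$ to $[-1,1]^2$ via the affine change $y=(x-p_j)/r$. The polynomial $\tilde P_j(y):=P_j(p_j+ry)$ has degree at most $d$ and, since $r\leq 1$, coefficients bounded by a constant $C_\phi=C_\phi(d,\|\phi\|_{C^d})$; dividing by $C_\phi$ gives $\bar P_j:=\tilde P_j/C_\phi$ of degree at most $d$ with coefficients bounded by $1$. Applying Theorem~\ref{thm_main_uniform_decoupling_eps} to $\bar P_j$ at the correspondingly rescaled scale $\sim\delta/C_\phi$ with parameter $\varepsilon$ and pulling back through the change of variables yields, for each $j$, a family $\mathcal P^{(j)}$ of rectangles covering $Q_j$, of $\varepsilon$-bounded overlap, each of which is $(\phi,O(\delta))$-flat, and satisfying an $\ell^4(L^4)$ decoupling inequality for all functions Fourier-supported in $\mathcal N_\delta^\phi(Q_j)$ --- in particular for $f_{Q_j}$.

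To combine the pieces, set $\mathcal P_\delta:=\bigcup_j\mathcal P^{(j)}$, apply the per-piece decoupling to $f=\sum_j f_{Q_j}$, and then Minkowski and H\"older over the $N$ pieces:
\[
\|f\|_{L^4(\R^3)}\leq \sum_j \|f_{Q_j}\|_{L^4(\R^3)}
\lesssim_{\phi,\varepsilon}\delta^{-\varepsilon}\sum_j \bigl(\#\mathcal P^{(j)}\bigr)^{1/4}\Bigl(\sum_{T\in\mathcal P^{(j)}}\|f_T\|_{L^4(\R^3)}^4\Bigr)^{1/4}
\lesssim_{\phi,\varepsilon}\delta^{-\varepsilon}N^{3/4}\,\#\mathcal P_\delta^{1/4}\Bigl(\sum_{T\in\mathcal P_\delta}\|f_T\|_{L^4(\R^3)}^4\Bigr)^{1/4},
\]
using $\max_j\#\mathcal P^{(j)}\le \#\mathcal P_\delta$ at the last step. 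Since $N^{3/4}\lesssim_{\phi,\varepsilon}\delta^{-3\varepsilon/8}$, the total loss is $\delta^{-O(\varepsilon)}$, so re-labeling $\varepsilon$ yields \eqref{eqn_decoupling_l4_conj_eps}.

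For the convex case I would additionally use the $C^2$ Taylor bound $|D^2(\phi-P_j)|\lesssim_\phi r^{d-1}$ together with $\det D^2\phi\ge 0$ to ensure $\det D^2\bar P_j\ge -(\delta/C_\phi)^2$ on $[-1,1]^2$ (at the cost of taking $r$ slightly smaller, say $r=c_\phi\,\delta^{\varepsilon}$), so that the relaxed $\ell^2(L^4)$ variant of Theorem~\ref{thm_main_uniform_decoupling_eps} (requiring only $\det D^2>-\tau^2$ rather than strict convexity at scale $\tau$) applies on each $\bar P_j$. Combining by Cauchy--Schwarz $\sum_j \|f_{Q_j}\|_{L^4}\leq N^{1/2}\bigl(\sum_j \|f_{Q_j}\|_{L^4}^2\bigr)^{1/2}$ followed by squaring the per-piece bound yields \eqref{eqn_decoupling_l2_conj_eps} up to a $\delta^{-O(\varepsilon)}$ loss. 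The principal obstacle is the careful bookkeeping of the many nested scales $(r,\delta,\delta/C_\phi,r^d,r^{d-1})$ together with the dependence $d=d(\varepsilon)$; the convex case in particular is only workable because of the relaxed $\det D^2>-\delta^2$ hypothesis in Theorem~\ref{thm_main_uniform_decoupling_eps}, since Taylor approximation does not preserve convexity exactly.
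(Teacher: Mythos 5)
Your proposal is correct and follows essentially the same route as the paper's proof: tile $[-1,1]^2$ into $\delta^{O(\varepsilon)}$-squares, replace $\phi$ on each square by its degree-$O(1/\varepsilon)$ Taylor polynomial, rescale and normalise to invoke Theorem~\ref{thm_main_uniform_decoupling_eps}, then recombine by the triangle and H\"older (resp.\ Cauchy--Schwarz) inequalities, using the relaxed hypothesis $\det D^2 > -\delta^2$ in the convex case precisely because Taylor approximation only approximately preserves convexity. The only differences from the paper are cosmetic choices of exponents (square side $\delta^{\varepsilon/4}$ or $\delta^\varepsilon$ versus the paper's $\delta^\varepsilon$, and Taylor degree $\lceil 10/\varepsilon\rceil$ versus $\lceil\varepsilon^{-1}\rceil$, $\lceil\varepsilon^{-3}\rceil$), plus a minor unstated point that the covering rectangles for different squares $Q_j$ stay inside $2Q_j$, so their overlaps across distinct $j$ are bounded.
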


\begin{thm}[Theorem \ref{thm_main_uniform_decoupling} with $\Ge$-dependent partitions]\label{thm_main_uniform_decoupling_eps}
Let $d\geq 0$. Then for every $\Ge>0$, every polynomial $\phi:\R^2\to \R$ of degree at most $d$ and with coefficients bounded by $1$, and every $0<\Gd<1$, there is a family $\mathcal P_\Gd=\mathcal P_\Gd(\phi,d,\Ge)$ of rectangles $T$ covering $[-1,1]^2$ such that the following statements hold:
\begin{enumerate}
    \item $\mathcal P_\Gd$ has $\Ge$-bounded overlap in the sense that $\sum_{T\in \mathcal P_\Gd}1_{T} \lesssim_{d,\Ge}\Gd^{-\Ge}$.
    \item $\phi$ is $\Gd$-flat over each $T\in \mathcal P_\Gd$.
    \item We have the following $\ell^p(L^p)$ decoupling inequality: for any function $f:\R^3\to \C$ with Fourier support in the $\Gd$-vertical neighbourhood of the graph of $\phi$ above $[-1,1]^2$, 
\begin{equation}\label{eqn_decoupling_l4_eps}
    \norm {f}_{L^p(\R^3)}\lesssim_{d,\Ge}\Gd^{-\Ge}\# \mathcal P_\Gd^{\frac 1 p}\left(\sum_{T\in \mathcal P_\Gd}\norm{f_T}_{L^p(\R^3)}^p\right)^{\frac 1 p}.
\end{equation}
If, in addition, we have $\det D^2 \phi> -\delta^{2}$  on $[-1,1]^2$, then \eqref{eqn_decoupling_l4} can be strengthened to the $\ell^2(L^p)$ decoupling inequality:
\begin{equation}\label{eqn_decoupling_l2_eps}
   \norm {f}_{L^p(\R^3)}\lesssim_{d,\Ge}\Gd^{-\Ge}\left(\sum_{T\in \mathcal P_\Gd}\norm{f_T}_{L^p(\R^3)}^2\right)^{\frac 1 2}.
\end{equation} 
\end{enumerate}
The implicit constants depend on $d,\Ge$ but not $\phi,\Gd,f$.
\end{thm}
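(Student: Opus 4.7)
The plan is to follow the strategy outlined in the introduction and carry out a double induction: on the degree $d$ of the polynomial, and, for each fixed $d$, on the scale $\Gd$. Fix $\Ge>0$, introduce a large parameter $M=M(\Ge,d)$ to be chosen at the very end, and split $[-1,1]^2$ according to the size of $P:=\det D^2\phi$, which is itself a polynomial of degree at most $2d-4$ with $O_d(1)$ coefficients. On the large-Hessian region $\{|P|\gtrsim M^{-1}\}$, after a bounded-overlap decomposition into rectangles where $\phi$ is well approximated by a quadratic polynomial at the appropriate scale, the hypothesis of Theorem \ref{thm_Bourgain_Demeter} is satisfied with $C_0=O_{d,M}(1)$, so the theorem produces the required $\ell^4(L^4)$ (respectively $\ell^2(L^4)$) decoupling with constant $C_{\Ge,M}M^{O(\Ge)}$; this loss will be defeated at the end by choosing $M$ large.

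The heart of the argument is the small-Hessian region $\{|P|<M^{-1}\}\cap[-1,1]^2$. The first step here is the \emph{generalised 2D uniform decoupling}: produce a boundedly-overlapping family of rectangles in $\R^2$ covering this sublevel set, over each of which the $\R^2$ geometry of $P$ is under control. I would first handle the special case $|P_y|\gtrsim 1$ via a Pramanik-Seeger iteration combined with the 2D uniform polynomial decoupling of \cite{Yang2}: the condition $|P_y|\gtrsim 1$ allows one to locally describe the level sets of $P$ as graphs of polynomial-like functions, to which Yang's result applies. For general $P$ one dyadically decomposes $|P_x|\sim\sigma_1$, $|P_y|\sim\sigma_2$; since $P_x,P_y$ have degree strictly less than $\deg P$, an induction on degree provides a decoupling of each dyadic piece into rectangles, and an invertible linear change of variables adapted to $(\sigma_1,\sigma_2)$ rescales such a piece into the situation $|P_y|\gtrsim 1$, reducing to the special case.

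Given an output rectangle $T$ from this 2D decoupling, I would rescale $T$ affinely to $[-1,1]^2$ and call the rescaled phase $\tilde\phi$. At this point the small-Hessian proposition to be established in Section \ref{sec_small_hessian} applies: there exist a rotation $\rho$, an exponent $\Ga=\Ga(d)\in(0,1)$, and polynomials $A,B$ with $O_d(1)$ coefficients such that
\begin{equation*}
\tilde\phi\circ\rho(x,y)=A(x)+M^{-\Ga}B(x,y)\,y.
\end{equation*}
A cylindrical decoupling in the $y$-direction at scale $M^{-\Ga}$, feeding the univariate polynomial $A$ into the 1D result of \cite{Yang2}, then yields $(\tilde\phi,M^{-\Ga})$-flat rectangles. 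Rescaling each such rectangle back to the unit square produces a polynomial $\bar\phi$ of degree at most $d$ whose coefficients are $O_d(M^{-\Ga})$; then $M^{\Ga}\bar\phi$ has $O_d(1)$ coefficients, and the induction hypothesis at the finer scale $M^{\Ga}\Gd$ furnishes a $(M^{\Ga}\bar\phi,M^{\Ga}\Gd)$-flat, equivalently $(\bar\phi,\Gd)$-flat, decoupling. Unwinding the rescalings, merging with the large-Hessian contribution, and using H\"older to combine the intermediate 2D $\ell^2(L^4)$ decoupling with the finer 3D decoupling (cf.\ Section \ref{sub_combining_decoupling}), then choosing $M$ so that $M^{O(\Ge)}$ is absorbed into the $\Gd^{-\Ge}$ factor, closes the induction on $\Gd$.

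The main obstacle, as anticipated in the outline, will be the generalised 2D uniform decoupling. The difficulty is that $\{|P|<M^{-1}\}$ is a semi-algebraic region of uncontrolled geometry, and the reduction from the general case to $|P_y|\gtrsim 1$ must simultaneously manage a degree-drop induction, the anisotropic linear rescalings attached to the dyadic decomposition of $(|P_x|,|P_y|)$, and the bounded-overlap bookkeeping throughout the Pramanik-Seeger iteration. A secondary but nontrivial difficulty is the propagation of the relaxed convexity hypothesis $\det D^2\phi>-\Gd^2$ through every rescaling and cylindrical cut: my plan is to track the lower bound on the Hessian determinant carefully at each step, verifying that the inductive invocation at the finer scale occurs with a correspondingly relaxed hypothesis, so that the $\ell^2(L^4)$ inequality propagates and the convex case follows alongside the general one.
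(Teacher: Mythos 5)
Your proposal is essentially the paper's own proof plan: the same curved/flat split of $[-1,1]^2$ by the size of $\det D^2\phi$, Bourgain--Demeter on the curved region, the generalised 2D uniform decoupling to reduce the flat region to rectangles, the small-Hessian normal form followed by a cylindrical decoupling in the $y$-direction, and closure by induction on the scale with $M=M(d,\Ge)$ chosen at the end so that the $M^{O(\Ge)}$ losses are absorbed into $\Gd^{-\Ge}$. Your proposed handling of the relaxed hypothesis $\det D^2\phi>-\Gd^2$ (tracking the Hessian lower bound through each rescaling so the $\ell^2(L^4)$ case propagates) also matches Section \ref{sub_convex}.
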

Notice that to prove \eqref{eqn_decoupling_l2_conj_eps} it suffices to assume a slightly weaker assumption $\det D^2\phi>-\Gd^2$ than convexity; thus the result is slightly stronger. This is used in the proof of Theorem \ref{thm_main_uniform_decoupling_eps} and in turn Theorem \ref{thm_main_uniform_decoupling}; see Section \ref{sub_Taylor} below.

\subsection{Removal of $\Ge$-dependence of partitions}
Now using an elementary argument, we will show that Theorem \ref{thm_conj_eps} implies Theorem \ref{thm_conj} and that Theorem \ref{thm_main_uniform_decoupling_eps} implies Theorem \ref{thm_main_uniform_decoupling}. 

\begin{proof}
Assume we have Theorem \ref{thm_conj_eps}. Let $\Ge=\Ge(\Gd)$ be a function such that $\Ge\searrow 0$ as $\Gd\searrow 0$, with the rate of decay to be determined.

Now with each $\Gd>0$, by Theorem \ref{thm_conj_eps}, we may find a family $\mathcal P_\Gd=\mathcal P_\Gd(\phi,\Ge(\Gd))$ of rectangles $T$ satisfying the prescribed properties. Note that by definition, $\mathcal P_\Gd$ depends on $\Gd$ only.

We then show that \eqref{eqn_decoupling_l4_conj_eps} implies \eqref{eqn_decoupling_l4_conj}. Let $\Ge>0$ be arbitrary. 

If $\Ge<2\Ge(\Gd)$, then using the monotonicity of the fixed function $\Ge(\Gd)$, $\Gd$ is bounded below by a positive number depending on $\Ge$ only. In this case, we have a trivial decoupling inequality, with the constant depending on $\phi,\Ge$.

If $\Ge\ge 2\Ge(\Gd)$, then it suffices to find a suitable $C'_{\phi,\Ge}$ such that
$$
C_{\phi,\Ge(\Gd)}\Gd^{-\Ge(\Gd)}\le C'_{\phi,\Ge}\Gd^{-\Ge},\,\,\text{for all }\Ge>0,\,\Gd>0,
$$
where $C_{\phi,\Ge}$ is the implicit constant in \eqref{eqn_decoupling_l4_conj_eps}. Since $\Ge\ge 2\Ge(\Gd)$, we have $\Gd^{-\Ge(\Gd)}\le \Gd^{-\Ge/2}$, and thus it suffices to show that
\begin{equation}\label{eqn_eps_delta}
    C_{\phi,\Ge(\Gd)}\Gd^{\Ge/2}\leq C'_{\phi,\Ge},\,\,\text{for all }\Ge>0,\,\Gd>0.
\end{equation}
We may assume $C_{\phi,\Ge}\nearrow \infty$ as $\Ge\searrow 0$. If we choose $\Ge(\Gd)$ to decrease slowly enough as $\Gd\searrow 0$, then we may have $C_{\phi,\Ge(\Gd)}\leq \log \Gd^{-1}$ for all $\Gd$ small enough. Then for all $\Ge>0$, we have 
$$
\lim_{\Gd\to 0^+}C_{\phi,\Ge(\Gd)}\Gd^{\Ge/2}\leq \lim_{\Gd\to 0^+}\Gd^{\Ge/2}\log \Gd^{-1}=0.
$$
Thus, by choosing a suitable constant $C'_{\phi,\Ge}$, we have \eqref{eqn_eps_delta}, and thus \eqref{eqn_decoupling_l4_conj}. The $\Ge$-bounded overlap follows from the same proof. In the convex case, the implication from \eqref{eqn_decoupling_l2_conj_eps} to \eqref{eqn_decoupling_l2_conj} also follows from the same proof. Thus we have Theorem \ref{thm_conj}.

The implication from Theorem \ref{thm_main_uniform_decoupling_eps} to Theorem \ref{thm_main_uniform_decoupling} follows from the same proof as above.
\end{proof}

\subsection{Theorem \ref{thm_main_uniform_decoupling_eps} implies Theorem \ref{thm_conj_eps}}\label{sub_Taylor}
Assuming Theorem \ref{thm_main_uniform_decoupling_eps}, we now prove Theorem \ref{thm_conj_eps}, which in turn implies Theorem \ref{thm_conj} by the subsection right above. The idea is a standard Taylor polynomial approximation.

\begin{proof}
Given $\phi\in C^\infty([-1,1]^2)$. We will first prove the general case without assuming $\phi$ is convex.

Given $\Ge>0$ and $\Gd>0$, we first partition $[-1,1]^2$ into squares $Q$ of side length $\Gd^{\Ge}$. Since we allow a loss of $\Gd^{-\Ge}$ in the decoupling inequality, by triangle and H\"older's inequalities it suffices to decouple each $Q$ into $(\phi,\Gd)$-flat rectangles. 

Let $d$ be the smallest integer greater than or equal to $\Ge^{-1}$. For each $Q$, we may approximate $\phi$ by its $d$-th degree Taylor polynomial $P_Q$, which depends on $\Ge,\Gd$. The error is at most 
$$
C_d\sup_{[-1,1]^2}|D^{d+1}\phi(x,y)|(\Gd^{\Ge})^{d+1}\leq C_d\sup_{[-1,1]^2}|D^{d+1}\phi(x,y)|\Gd^{1+\Ge}.
$$
For $\Gd$ small enough (depending on $\Ge$), the above error will be less than $\Gd/2$. Hence, to find a cover of $Q$ by $(\phi,\Gd)$-flat rectangles, it suffices to find a cover of $Q$ by $(P_Q,\Gd/2)$-flat rectangles.

Since all coefficients of $P_Q$ are bounded by a large constant depending on $\phi,\Ge$ only, we may normalise $P_Q$ to $\tilde P_Q$ with maximum coefficient having magnitude $1$, and apply Theorem \ref{thm_main_uniform_decoupling_eps} to $\tilde P_Q$ find a cover $\mathcal P_{\Gd,Q}$ of $Q$ that satisfies the prescribed properties. The final collection $\mathcal P_\Gd$ is simply defined to be the union of the collections $\cup_Q \mathcal P_{\Gd,Q}$. The overlap between rectangles covering different $Q$'s is trivially bounded, since each covering rectangle of $Q$ is a subset of $2Q$, and $2Q$'s have bounded overlap.

Although $P_Q$ depends on $\Gd$, our main uniform decoupling Theorem \ref{thm_main_uniform_decoupling_eps} ensures that there is a uniform bound of all decoupling constants as $Q$ varies, and this uniform bound is independent of $\Gd$. Thus, the final decoupling inequality \eqref{eqn_decoupling_l2_conj_eps} follows immediately by the triangle and H\"older's inequalities.

Lastly, in the special case when $\phi$ is convex (the concave case is symmetric), by choosing the degree of $P_Q$ to be greater than $\Ge^{-3}$ if necessary, we may approximate $\phi$ by $P_Q$ such that $\det D^2 P_Q\geq -\Gd^3$. By normalising $P_Q$ to $\tilde P_Q$ with maximum coefficient having magnitude $1$ and choosing $\Gd$ small enough, we also have $1\gtrsim_{\Ge}\det D^2 \tilde P_Q\geq -\Gd^2$. In this case, we get uniform $\ell^2$ decoupling inequalities from Theorem \ref{thm_main_uniform_decoupling_eps}. This finishes the proof.
\end{proof}

Therefore, all that is left is the proof of Theorem \ref{thm_main_uniform_decoupling_eps}.

\section{Ingredients of the proof of Theorem \ref{thm_main_uniform_decoupling_eps}}\label{sec_ingredients}
All implicit constants in this section are assumed to depend on the degree $d$.

The proof of the main theorem has the following main ingredients: induction on scales, a 2D uniform decoupling theorem for polynomials, a ``Small Hessian Theorem", and a simple projection lemma.

\subsection{Generalised 2D uniform decoupling for polynomials}
We need the following uniform decoupling theorem for bivariate polynomials.
\begin{thm}\label{thm_2D_general_uniform}
For each $d\geq 0$ and $\varepsilon>0$, there is a constant $C_{d,\varepsilon}$ such that the following holds. For any polynomial $P: \R^2 \to \R$ of degree at most $d$ and with coefficients bounded by $1$, any $0<\delta<1$, there exists a cover $\mathcal T_\Gd$ of the set
$$
\{(x,y)\in [-1,1]^2:|P(x,y)|<\delta\}
$$
by rectangles $T$ such that the following holds:
\begin{enumerate}
    \item $|P|\lesssim \Gd$ in each $T$.
    \item $\mathcal T_\Gd$ has bounded overlap in the sense that $\sum_{T\in \mathcal T_\Gd}1_{100T}\lesssim 1$.
    \item The following $\ell^2(L^p)$ decoupling inequality holds: for any $2\le p\le 6$ and any function $f:\R^2\to \C$ Fourier supported in $ \{(x,y)\in [-1,1]^2:|P(x,y)|<\delta\}$, we have
    \begin{equation}\label{eqn_2D_general_decoupling}
         \norm f_{L^p(\R^2)}\leq C_{d,\Ge}\Gd^{-\Ge}\left(\sum_{T\in \mathcal T_\Gd}\norm{\mathcal R_T f}^2_{L^p(\R^2)}\right)^{\frac 1 2},
    \end{equation}
    where $\mathcal R_T f$ denotes the Fourier restriction of $f$ to $T$.
\end{enumerate}

\end{thm}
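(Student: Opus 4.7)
The plan is to prove Theorem \ref{thm_2D_general_uniform} by strong induction on the degree $d$, with the base cases $d=0,1$ trivial because then $\{|P|<\delta\}\cap[-1,1]^2$ is either empty, all of $[-1,1]^2$, or a single slab covered by $O(1)$ rectangles. For the inductive step, I would first handle the special case in which $|P_y|\gtrsim 1$ uniformly on $[-1,1]^2$. Here the implicit function theorem identifies $\{|P|<\delta\}$ as a region of vertical thickness $O(\delta)$ sandwiched between two graphs $y=\psi_\pm(x)$ with derivative bounds inherited from $P$ via implicit differentiation, so that the $\psi_\pm$ are polynomial-like in the sense of \cite{Yang2}. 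A Pramanik-Seeger-type iteration as in \cite{PS2007}, combined with the uniform 1D decoupling for polynomial-like graphs from \cite{Yang2}, then produces the desired $\ell^2(L^4)$ decoupling into boundedly-overlapping rectangles adapted to $\psi_\pm$.

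For the general case, I would dyadically decompose the sublevel set according to $|P_x|\sim\sigma_1$ and $|P_y|\sim\sigma_2$ for dyadic $\sigma_1,\sigma_2\in(0,1]$, and by symmetry assume $\sigma_1\leq\sigma_2$. Since $P_x$ and $P_y$ have degree at most $d-1$, the inductive hypothesis applied to $P_y$ yields an $\ell^2(L^4)$ decoupling of $\{|P_y|\lesssim\sigma_2\}$ into boundedly-overlapping rectangles, and a further refinement by $|P_x|\sim\sigma_1$ produces rectangles $T$ on which both partial derivatives have prescribed sizes. On each such $T$ I would apply an affine change of variables $L$ sending $[-1,1]^2$ to $T$, together with a rescaling by a suitable factor $\lambda$ comparable to $\sigma_2$ times the $y$-side length of $T$, so that $\tilde P:=\lambda^{-1}(P\circ L)$ has degree at most $d$ and $O_d(1)$ coefficients while $|\tilde P_v|\gtrsim 1$. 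The sublevel set transforms to $\{|\tilde P|<\delta/\lambda\}\cap[-1,1]^2$, to which the special case above applies; pulling back through $L$ and summing over the $O((\log\delta^{-1})^2)\lesssim_\varepsilon\delta^{-\varepsilon}$ dyadic pairs $(\sigma_1,\sigma_2)$ completes the construction.

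The main obstacle I anticipate is twofold. First, the affine rescaling step requires careful control of how polynomial coefficients behave when $P$ is restricted to a thin rectangle and then dilated to the unit square: this is presumably the purpose of the algebraic lemma recorded in Section \ref{sec_appendix}, and verifying that the rescaled $\tilde P$ indeed has $O_d(1)$ coefficients while simultaneously satisfying $|\tilde P_v|\gtrsim 1$ is a delicate book-keeping task depending on the precise choice of rescaling factor $\lambda$ and the aspect ratio of $T$. Second, ensuring that the composed decouplings yield an $\ell^2$ (not $\ell^4$) norm requires each layer of the induction to produce an $\ell^2(L^4)$ inequality, and that the nested dyadic decompositions for $P_x$ and $P_y$ be threaded together via Cauchy-Schwarz without leaking a square-root cardinality factor. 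Once these two points are verified, combining the inner and outer $\ell^2$ decouplings is routine and the induction closes, giving the boundedly-overlapping cover $\mathcal T_\Gd$ and the inequality \eqref{eqn_2D_general_decoupling}.
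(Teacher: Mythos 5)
Your overall architecture matches the paper's: induct on $d$, treat the special case $|P_y|\gtrsim 1$ by a Pramanik--Seeger iteration anchored at the one-dimensional decoupling of \cite{Yang2}, and reduce the general case to the special one by dyadically decomposing according to the sizes of $P_x$ and $P_y$ and then rescaling. However there are substantive gaps at two of the steps you flag as the crux. First, your claim that the implicit curve $\psi_\pm$ (equivalently the $g$ with $P(x,g(x))=0$) is itself ``polynomial-like in the sense of \cite{Yang2}'' is not something you can cite from \cite{Yang2}: $g$ is algebraic but not a polynomial, and its second derivative is an algebraic (not rational) function of $x$ whose sublevel-set length estimates are not provided there. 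The paper sidesteps this by approximating $g$, at each PS scale, by the \emph{rational} function $r=-A/B$ obtained from writing $P=A(x)+yB(x)+y^2C(x,y)$, and the key additional ingredient this requires is an extension of the 1D uniform decoupling from polynomials to the whole family $\mathcal Q(d)$ of bounded-denominator rational functions (Theorem~\ref{thm_Yang2_rational}). Your proposal does not mention this extension, and without it the PS iteration does not close: the object you are decoupling at each stage is neither a polynomial nor one of Yang's polynomial-like graphs.

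Second, your dyadic reduction does not uniformly collapse to the special case. When both $|P_x|$ and $|P_y|$ are $O(\delta)$ (the layer $V_0$ in the paper's decomposition), there is no scale $\lambda$ for which $\tilde P=\lambda^{-1}(P\circ L)$ simultaneously has $O_d(1)$ coefficients and a unit-size partial derivative, so this block must be handled directly. The paper does so by approximating $P(x,y)$ by $P(x,0)$ with error $O(\delta)$ and using the fundamental theorem of algebra to split $\{|P(x,0)|\lesssim\delta\}$ into $O(1)$ intervals, which is a purely cylindrical argument, not a reduction to $|\tilde P_v|\gtrsim 1$. Relatedly, even in the layers $|P_y|\sim\sigma_2\gg\delta$ your rescaling as stated is not sufficient to get $O_d(1)$ coefficients for $\tilde P$: on a rectangle where merely $|P_x|\sim\sigma_1,\,|P_y|\sim\sigma_2$ the polynomial $P$ itself could be of order $1$, so one must first perform an intermediate cylindrical reduction (the paper's Step~B3, again via the fundamental theorem of algebra) to cut down to pieces where $|P|\lesssim\eta\sigma_2$ before dividing by $\eta\sigma_2$. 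You correctly anticipate that the coefficient bookkeeping is delicate, but the fix is this extra cylindrical step, and it should be made explicit for the induction to close.
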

The proof is given in Section \ref{sec_2D_uniform}.

{\it Remark.} If $P$ is of the form $y-Q(x)$, then this reduces to a previous result, namely, Theorem 1.5 of \cite{Yang2}. This explains the name ``generalised 2D uniform decoupling". Also, the theorem actually holds for all $\ell^2(L^p)$, $2\leq p\leq 6$, but we will only use the case $2\leq p\leq 4$.

\subsection{Polynomials with small Hessian determinant}
The following theorem will be used in the cylindrical decoupling part of the main proof.
\begin{thm}\label{thm_small_hessian}
For each $d\ge 2$ there is a constant $\Ga=\Ga(d)\in (0,1]$ such that the following holds. Let $\phi:\R^2\to \R$ be a polynomial of degree at most $d$, without linear terms and having $O(1)$ coefficients. If all coefficients of its Hessian determinant $\det D^2 \phi$ are bounded by some $\nu\in (0,1)$, then there exists a rotation $\rho:\R^2\to \R^2$ such that
$$
\phi\circ \rho(x,y)=A(x)+\nu^{\Ga}B(x,y),
$$
where $A,B$ are polynomials with $O(1)$ coefficients.
\end{thm}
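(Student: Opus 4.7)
The plan is to prove this via a \L ojasiewicz-type inequality in the finite-dimensional space of polynomials. Let $\mathcal V_d$ be the real vector space of polynomials $P\in\R[x,y]$ of degree at most $d$ without linear terms, and $\mathcal V_d(C_0)\subset\mathcal V_d$ the compact semi-algebraic subset of $P$ with coefficients bounded by some fixed $C_0$. I would introduce two continuous semi-algebraic functions on $\mathcal V_d(C_0)$:
\[
f(P) := \max\bigl|\text{coefficients of }\det D^2 P\bigr|,\qquad g(P) := \inf_{\rho\in SO(2),\,A\in\R[x]_{\leq d}}\bigl\|P\circ\rho-A(x)\bigr\|_{\mathrm{coeff}}.
\]
The quantity $g(P)$ captures precisely the defect from the desired conclusion.

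The key algebraic input is that $\{f=0\}=\{g=0\}$ on $\mathcal V_d(C_0)$. The inclusion $\{g=0\}\subset\{f=0\}$ is immediate: if $P\circ\rho(x,y)=A(x)$ then all second partials of $P\circ\rho$ in $y$ vanish, so $\det D^2(P\circ\rho)\equiv 0$, and $\det D^2 P\equiv 0$ since $\det D^2(P\circ\rho)=\det D^2 P\circ\rho$ for rotations. The reverse inclusion is the classical structure theorem: a bivariate polynomial $P$ without linear terms with $\det D^2 P\equiv 0$ is of the form $Q(ax+by)$ for some $(a,b)\neq(0,0)$ and univariate $Q$. One can verify this by analyzing the top homogeneous part $P_d$, whose Hessian determinant must vanish, forcing $P_d=c(ax+by)^d$, and then inducting on degree to show each lower homogeneous component must also be a polynomial in the same linear form $ax+by$. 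Rotating so that $ax+by$ becomes a multiple of $x$ then makes $P\circ\rho$ depend only on $x$.

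With this classification in hand, the \L ojasiewicz inequality for continuous semi-algebraic functions on the compact semi-algebraic set $\mathcal V_d(C_0)$ produces constants $C=C(d)$ and $\Ga=\Ga(d)\in(0,1]$ satisfying $g(P)\leq C f(P)^{\Ga}$ on $\mathcal V_d(C_0)$; if the raw \L ojasiewicz exponent exceeds $1$, clip to $\Ga=1$ using boundedness of $f$ on the compact set. Given the hypothesis $f(P)\leq\nu$, we obtain $\rho\in SO(2)$ and univariate $A$ with $\|P\circ\rho-A(x)\|_{\mathrm{coeff}}\leq C\nu^{\Ga}$; setting $B(x,y):=\nu^{-\Ga}(P\circ\rho-A(x))$ yields $P\circ\rho=A(x)+\nu^{\Ga}B(x,y)$ with $\|B\|_{\mathrm{coeff}}\lesssim 1$. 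The optimal $A$ is the $y^0$-component of $P\circ\rho$ and hence inherits $O(1)$ coefficients, since rotations change coefficient bounds by at most $O_d(1)$ factors; note also that $B$ has no $y^0$-component, matching the $B(x,y)y$ form mentioned in the introduction.

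The main obstacle is the forward direction of the classification, namely the structure theorem for bivariate polynomials with identically vanishing Hessian determinant. Though classical (such polynomials are precisely those parametrizing generalized cylinders), this point must either be cited or proved in detail, and the induction on degree above requires care to ensure the linear form $ax+by$ persists across homogeneous components. Everything else is routine: the semi-algebraic framework is elementary, the \L ojasiewicz inequality is standard for compact semi-algebraic sets, and the coefficient bookkeeping to extract $A$ and $B$ is straightforward.
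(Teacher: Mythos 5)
Your approach is correct in outline but genuinely different from the paper's. You derive the quantitative statement from the \emph{qualitative} classification of bivariate polynomials with identically vanishing Hessian determinant, and then upgrade via the \L ojasiewicz inequality for continuous semi-algebraic functions on the compact semi-algebraic set $\mathcal V_d(C_0)$. The semi-algebraicity checks are all sound: $f$ is a max of absolute values of polynomial functions of the coefficients; $g$ is a minimum over the compact circle $SO(2)$ (the infimum over $A$ for fixed $\rho$ is just a linear projection, hence attained), so by quantifier elimination $g$ is continuous semi-algebraic. The inclusion $\{g=0\}\subset\{f=0\}$ is as you say, and the coefficient bookkeeping at the end (choosing $A$ to be the $y^0$-part of $P\circ\rho$) is clean. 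The paper instead works \emph{directly quantitatively}: it reduces via Lemma~\ref{lem_large_eigenvalue} to the case where the only quadratic term is $x^2$, then performs an inductive Newton-polygon analysis peeling off ``representing lines'' (Lemma~\ref{lem_rep}, Proposition~\ref{prop_line}), gaining a factor $\nu^{\beta}$ per step and yielding the explicit $\alpha=2^{(2-d)(d+1)^2}$. Your route hides the exponent inside the \L ojasiewicz theorem, so $\alpha$ is non-effective; the paper's Remark~1 says this is harmless for the application, so that trade-off is acceptable. The real substance you are outsourcing is the structure theorem ($\det D^2 P\equiv 0$, no linear terms $\Rightarrow$ $P=Q(ax+by)$), which you correctly flag as needing a proof or citation. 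Note that the induction you sketch for it --- constraining homogeneous components degree by degree using the degree-wise vanishing of $\det D^2 P$ --- is precisely the same combinatorial analysis the paper carries out in its quantitative form, so the \L ojasiewicz layer buys you conceptual economy rather than a shortcut past the algebraic core; if you prove the structure theorem by that induction you will have essentially re-derived Propositions~\ref{prop_small_hessian_assume_A} and~\ref{prop_line} in disguise. If instead you can cite the classification of entire polynomial developable graphs as a known result, your argument is a genuine simplification of the proof length.
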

The proof is given in Section \ref{sec_small_hessian}.

{\it Remark 1.} The exact value of $\Ga$ is not important for our purpose. In the proof given in Section \ref{sec_small_hessian}, we see that $\Ga$ can be taken to be $2^{(2-d)(d+1)^2}$, which is probably far from being sharp.

{\it Remark 2.} The rotation is necessary in general, as can be seen from the example $\phi(x,y)=x^2+(2+\nu)xy+y^2$. The restriction ``without linear terms" is also necessary, as can be seen from the example $\phi(x,y)=y+x^2$.

\subsection{Projection lemma}
We will need the following simple projection lemma. Roughly speaking, this lemma says that decoupling in higher dimensions is dominated by decoupling in one of its projections. Again, this theorem works for general Lebesgue exponents, but we will only use the $\ell^2(L^p)$ estimate.
\begin{lem}\label{lem_projection}
Let $\mathcal A$ be any finite collection of Borel subsets of $\R^2$, and denote by $D(\mathcal A)$ the smallest constant such that for all functions $g:\R^2\to \C$ Fourier supported on $\cup \mathcal A$, we have
$$
\norm g_{L^p(\R^2)}\leq D(\mathcal A)\left(\sum_{A\in \mathcal A}\norm{ \mathcal R_A g}^2_{L^p(\R^2)}\right)^{\frac 1 2}.
$$
Let $\mathcal B$ be a finite collection of Borel subsets of $\R^3$ such that there is a fixed projection $\pi:\R^3\to \R^2$ and a one-to-one correspondence between sets $A\in \mathcal A$ and $B\in \mathcal B$ via $\pi(B)=A$. Then for all functions $f:\R^3\to \C$ Fourier supported on $\cup \mathcal B$, we have
$$
\norm f_{L^p(\R^3)}\leq D(\mathcal A)\left(\sum_{B\in \mathcal B}\norm{ \mathcal R_B f}^2_{L^p(\R^3)}\right)^{\frac 1 2}.
$$
\end{lem}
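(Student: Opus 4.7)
My plan is a classical slicing argument that upgrades the 2D decoupling to 3D via the cylindrical direction orthogonal to the projection.

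First, I would compose with an invertible linear map (which only changes constants under Fourier conjugation) to assume without loss of generality that $\pi$ is the projection $(x_1,x_2,x_3)\mapsto(x_1,x_2)$. Then for each fixed $x_3\in\R$, I would define $g_{x_3}(x_1,x_2):=f(x_1,x_2,x_3)$, and observe via a partial inverse Fourier transform in $\xi_3$ that
$$\widehat{g_{x_3}}(\xi_1,\xi_2)=\int_{\R}\hat f(\xi_1,\xi_2,\xi_3)\,e^{2\pi i x_3\xi_3}\,d\xi_3,$$
which is supported in $\pi(\supp\hat f)\subseteq\pi(\cup\mathcal B)=\cup\mathcal A$. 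The definition of $D(\mathcal A)$ then applies uniformly in $x_3$ to give
$$\|g_{x_3}\|_{L^4(\R^2)}\le D(\mathcal A)\Bigl(\sum_{A\in\mathcal A}\|\mathcal R_A g_{x_3}\|_{L^4(\R^2)}^2\Bigr)^{1/2}.$$

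Next I would raise both sides to the fourth power, integrate in $x_3$ (Fubini on the left), and use Minkowski's inequality in $L^2_{x_3}$ to pull $\sum_A$ outside the norm, arriving at
$$\|f\|_{L^4(\R^3)}^4\le D(\mathcal A)^4\Bigl(\sum_{A\in\mathcal A}\bigl\|\,\|\mathcal R_A g_{x_3}\|_{L^4_{x_1,x_2}}^2\,\bigr\|_{L^2_{x_3}}\Bigr)^2.$$

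Finally, I would identify each inner norm with $\|\mathcal R_B f\|_{L^4(\R^3)}^2$, where $B\in\mathcal B$ is the partner of $A$. Setting $F_A(x_1,x_2,x_3):=\mathcal R_A g_{x_3}(x_1,x_2)$, its 3D Fourier transform is $1_A(\xi_1,\xi_2)\hat f(\xi_1,\xi_2,\xi_3)$, so $F_A=\mathcal R_{A\times\R}f$. Since $\supp\hat f\subseteq\cup\mathcal B$ and $B\subseteq\pi^{-1}(A)=A\times\R$ is the unique element of $\mathcal B$ projecting onto $A$, the cylinder $A\times\R$ intersects $\supp\hat f$ exactly in $B$; hence $F_A=\mathcal R_B f$, and by Fubini $\bigl\|\,\|\mathcal R_A g_{x_3}\|_{L^4_{x_1,x_2}}^2\,\bigr\|_{L^2_{x_3}}=\|\mathcal R_B f\|_{L^4(\R^3)}^2$. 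Taking fourth roots will conclude the proof.

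\textbf{Expected main obstacle.} The argument is essentially bookkeeping; the only maneuver that warrants real care is the Minkowski step in $L^2_{x_3}$, which allows $\sum_A$ to be pulled outside the $L^p$ norm without paying any cardinality factor---precisely the hallmark of an $\ell^2$ decoupling. The final Fourier-support identification $\mathcal R_{A\times\R}f=\mathcal R_B f$ is the reason the lemma is phrased via the one-to-one projection correspondence $\pi(B)=A$; when the projected sets $\{A\}$ happen to overlap (as they might in full generality), one can still absorb this into the decoupling constant at negligible cost.
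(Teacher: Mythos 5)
Your slicing strategy---freeze $x_3$, apply the $\R^2$ decoupling to $g_{x_3}$, integrate in $x_3$, and pull the $\ell^2$ sum out with Minkowski's inequality in $L^2_{x_3}$---is precisely the ``freezing one variable'' route the paper points to (Proposition 6.2 of the authors' earlier work), and your $L^4/L^2$ bookkeeping in Step 3 is correct.

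The one step that needs more care is the Fourier-support identification $\mathcal R_{A\times\R}f=\mathcal R_Bf$. Your stated reason, that $B$ is the unique member of $\mathcal B$ projecting onto $A$, does not prevent some \emph{other} $B'\in\mathcal B$, with $\pi(B')=A'\neq A$ but $A'\cap A\neq\varnothing$, from meeting the cylinder $A\times\R$ inside $\supp\hat f$. What the proof actually needs is the stronger \emph{fiber} condition $B=(A\times\R)\cap(\cup\mathcal B)$, which is not a consequence of the bijection $\pi(B)=A$ once the projected sets overlap. This is not merely cosmetic, and your closing remark that overlapping $A$'s can be ``absorbed into the decoupling constant at negligible cost'' is not correct. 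Indeed, in the analogous $\R^1\to\R^2$ setting take $A_1=[0,1]$, $A_2=[0,1-\varepsilon]$, $B_1=A_1\times[0,\kappa]$, $B_2=A_2\times[10,10+\kappa]$: one checks $D(\mathcal A)=1$, but for $\hat f=1_{B_1}+1_{B_2}$ one computes, in the limit $\varepsilon,\kappa\to0$, that $\|f\|_4^4\to6\|\mathcal R_{B_1}f\|_4^4$ while $\bigl(\|\mathcal R_{B_1}f\|_4^2+\|\mathcal R_{B_2}f\|_4^2\bigr)^2\to4\|\mathcal R_{B_1}f\|_4^4$, so the conclusion fails by a strict factor. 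The resolution is that the lemma is to be read with the fiber condition built in, and all applications in the paper satisfy it: $\mathcal B=\{\mathcal N^{\phi}_{M^{-1}}(T):T\in\mathcal T_{M^{-1}}\}$ consists of vertical graph-neighbourhoods, so over each $T$ the fiber of $\cup\mathcal B$ is independent of which $T$ contains the base point, and $(T\times\R)\cap(\cup\mathcal B)=\mathcal N^{\phi}_{M^{-1}}(T)$ exactly. With that understanding, your proof is complete and agrees with the paper's.
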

The proof is elementary by freezing one variable; see for instance, Lemma 8.2 of \cite{BDG2016} and Proposition 6.2 of \cite{LiYang}.

\section{Generalised 2D uniform decoupling for polynomials}\label{sec_2D_uniform}
In this section, we prove Theorem \ref{thm_2D_general_uniform}. All implicit constants in this section are assumed to depend on the degree $d$.

\subsection{Uniform decoupling for rational functions}
Our starting point will be a uniform decoupling theorem for a large class of univariate rational functions.
\begin{defn}\label{defn_rational}
For $d\geq 0$, let $\mathcal Q(d)$ denote the collection of all rational functions $r(x)$ of the form $p_1/p_2$, where $p_1,p_2$ are univariate polynomials of degree at most $d$ and with coefficients bounded by $C_d$, and in addition $|p_2|\geq c_d$ over $[-1,1]$. Here, $C_d,c_d$ are small degree constants that will be specified later. 
\end{defn} 
We also recall the terminology from \cite{Yang2}.
\begin{defn}[Admissible partitions]
Let $I_0$ be a compact interval and $\phi:I_0\to \R$ be a smooth function. We say a partition $\mathcal P$ of $I_0$ is 
\begin{enumerate}
\item super-admissible for $\phi$ at the scale $\Gd$, if each interval in $\mathcal P$ is $(\phi,\Gd)$-flat.
\item sub-admissible for $\phi$ at the scale $\Gd$, if the union of any two adjacent intervals in $\mathcal P$ is not $(\phi,\Gd)$-flat.
\item admissible for $\phi$ at the scale $\Gd$, if it is both super-admissible and sub-admissible for $\phi$ at the scale $\Gd$.
\end{enumerate}

\end{defn}
The following proposition shows that admissible partitions generally exist.
\begin{prop}[Proposition 3.1 of \cite{Yang2}]\label{prop_3.1_Yang}
For every compact interval $I_0$, every smooth function $\phi:I_0\to \R$ and every $\Gd>0$, there exists an admissible partition $\mathcal I_\Gd$ of $I_0$ for $\phi$ at the scale $\Gd$. Moreover, for each $I\in \mathcal I_\Gd$, we also have 
\begin{equation}\label{eqn_maximally_flat}
    \sup_{x\in I}|\phi''(x)||I|^2\gtrsim \Gd.
\end{equation}
\end{prop}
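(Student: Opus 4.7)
The plan is a greedy left-to-right sweep. Setting $a_0:=\inf I_0$, I inductively define
$$
a_{k+1}:=\sup\{x\in[a_k,\sup I_0]:\phi\text{ is }\Gd\text{-flat on }[a_k,x]\}
$$
and let $I_{k+1}:=[a_k,a_{k+1}]$. Since the map $x\mapsto \sup_{u,v\in[a_k,x]}|\phi(v)-\phi(u)-\phi'(u)(v-u)|$ is continuous and nondecreasing in $x$, the supremum is actually attained, so each $I_{k+1}$ is $(\phi,\Gd)$-flat by construction; this gives super-admissibility. For finiteness, set $M:=\sup_{I_0}|\phi''|$: by Taylor's theorem with remainder, any subinterval of length at most $\sqrt{2\Gd/M}$ is automatically $\Gd$-flat, so whenever the process has not yet terminated one has $|I_k|\geq\sqrt{2\Gd/M}$, forcing the total number of intervals to be finite.

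Sub-admissibility comes for free from the same maximality: for each non-terminal index $k$, the definition of $a_k$ as a supremum means that $[a_{k-1},a_k+\Ge]$ fails to be $\Gd$-flat for every $\Ge>0$, and since $a_{k+1}>a_k$ the union $I_k\cup I_{k+1}$ contains such an enlargement and is therefore not $\Gd$-flat. The lower bound $\sup_{x\in I}|\phi''(x)|\,|I|^2\gtrsim\Gd$ on a typical interval $I=I_k$ with $k<N$ then follows by picking witnesses $u_\Ge,v_\Ge\in[a_{k-1},a_k+\Ge]$ violating flatness, extracting a subsequential limit $(u,v)\in\overline I\times\overline I$ as $\Ge\to 0^+$, and applying the integral form of Taylor's theorem:
$$
\Gd\le |\phi(v)-\phi(u)-\phi'(u)(v-u)|\le \tfrac{1}{2}\sup_{\xi\in I}|\phi''(\xi)|\,|v-u|^2 \le \tfrac{1}{2}\sup_{I}|\phi''|\,|I|^2.
$$

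The one step I expect to require care is the rightmost interval $I_N$: the greedy algorithm halts at $\sup I_0$ rather than at a genuinely maximal extension, so $I_N$ may be strictly shorter than what maximality would have produced, and the witness argument above does not apply to it directly. I would handle this by a post-processing merge-and-resplit: if $\sup_{I_N}|\phi''|\,|I_N|^2$ is too small relative to $\Gd$, absorb $I_N$ into $I_{N-1}$, and if the resulting merged interval is no longer $\Gd$-flat, run a single additional maximal split leftward from $a_{N-2}$ to restore super-admissibility. Because this only affects the right boundary, both the $\Gd$-flatness of every interval and the adjacent-union-not-flat property are preserved, and the lower bound for the new, now-maximally-extended final interval follows from the same limiting argument as in the interior case.
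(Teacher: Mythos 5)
Your greedy sweep is sound for the interior intervals and follows essentially the standard route: continuity of the flatness functional gives attainment of each supremum (hence super-admissibility), Taylor's bound $\tfrac12 M |J|^2$ gives finiteness, maximality of each $a_k$ gives sub-admissibility, and the limiting-witness computation shows that each interior $I_k$ has flatness measure exactly $\Gd$, so that \eqref{eqn_maximally_flat} follows with constant $\tfrac12$. The gap you flagged at the right end is real, but the merge-and-resplit you propose does not close it.

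First, the conditional ``if the resulting merged interval is no longer $\Gd$-flat'' is vacuous: $I_{N-1}\cup I_N$ is \emph{never} $\Gd$-flat, because $a_{N-1}$ was chosen maximal in the greedy sweep (equivalently, your partition is already sub-admissible), so a resplit is always required. Second, and more importantly, the resplit you describe---taking the new rightmost interval to be the maximal $\Gd$-flat interval $[b,a_N]$ grown leftward from $a_N$, with $b$ minimal---makes $[b,a_N]$ satisfy \eqref{eqn_maximally_flat} but leaves the other piece $[a_{N-2},b]$ with no lower bound at all: $b$ can sit arbitrarily close to $a_{N-2}$, in which case $[a_{N-2},b]$ is short and has tiny flatness, while super- and sub-admissibility are preserved (the latter because $b>a_{N-2}$ and $a_{N-2}$ was itself a greedy maximum). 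For a concrete failure take $\phi(x)=x^2$ on $I_0=[0,1.01\sqrt{\Gd}]$, where the flatness of $[u,v]$ is $(v-u)^2$: the sweep gives $I_1=[0,\sqrt{\Gd}]$, $I_2=[\sqrt{\Gd},1.01\sqrt{\Gd}]$ with $\sup_{I_2}|\phi''|\,|I_2|^2=2\cdot10^{-4}\Gd$, and after merging and right-greedy resplitting one finds $b=0.01\sqrt{\Gd}$, so the new left piece $[0,0.01\sqrt{\Gd}]$ again has $\sup|\phi''|\,|I|^2=2\cdot10^{-4}\Gd$---the defect has merely migrated one interval to the left. A correct fix must choose the split point $b\in(a_{N-2},a_N)$ so that \emph{both} halves of the merged interval have flatness $\gtrsim\Gd$ (e.g.\ by balancing the two flatness functionals of $[a_{N-2},b]$ and $[b,a_N]$ against each other), rather than by maximizing one of them; the boundary point you pick controls only one side and says nothing about the other.
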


We are ready to state the following uniform decoupling theorem for $\mathcal Q(d)$.
\begin{thm}\label{thm_Yang2_rational}
For any $d\geq 1$ and $\Ge>0$, there is a constant $C_{d,\Ge}$ such that the following holds. For any $0<\Gd< 1$, any rational function $r\in \mathcal Q(d)$, any sub-admissible partition $\mathcal I_\Gd$ of $[-1,1]$ for $r$ at the scale $\Gd$, any $f\in L^p(\R^2)$ Fourier supported in $\mathcal N^{r}_{O(\Gd)}([-1,1])$, we have
\begin{equation*}
    \norm f_{L^p(\R^2)}\leq  C_{d,\Ge} \Gd^{-\Ge}\left(\sum_{I\in \mathcal I_\Gd}\norm { f_I}^2_{L^p(\R^2)}\right)^{\frac 1 2}.
\end{equation*}
Here and throughout this section, for $f:\R^2\to \C$ and $I\sub \R$, $f_I$ will denote the Fourier restriction of $f$ to the strip $I\times \R$.
\end{thm}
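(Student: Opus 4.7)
The plan is to reduce Theorem \ref{thm_Yang2_rational} to the polynomial uniform decoupling (Theorem 1.5 of \cite{Yang2}) via a local Taylor approximation, and then to transfer the resulting admissible decoupling to the given sub-admissible partition $\mathcal{I}_\delta$.

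I first exploit that every $r = p_1/p_2 \in \mathcal{Q}(d)$ with $|p_2| \geq c_d$ on $[-1,1]$ has $\|r^{(k)}\|_{L^\infty([-1,1])} \lesssim_{d,k} 1$ for all $k$, and that $r''$, being a rational function whose denominator is a power of $p_2$ (bounded below) and whose numerator has degree $\leq 3d$, has only $O_d(1)$ zeros on $[-1,1]$. Partition $[-1,1]$ into intervals $J$ of length $\ell = \delta^{\varepsilon/(10d)}$; a Cauchy--Schwarz step in $J$ costs only $\delta^{-\varepsilon/(20d)}$, so it suffices to prove the decoupling for each $f_J$. On each $J$, replace $r$ by its Taylor polynomial $P_J$ of degree $D = \lceil 10d/\varepsilon \rceil$ centred at $\mathrm{mid}(J)$: the remainder is $\lesssim_{d,D} \ell^{D+1} \ll \delta$, so $\mathcal{N}^r_{O(\delta)}(J) \sub \mathcal{N}^{P_J}_{O(\delta)}(J)$ and Fourier supports transfer. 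After an affine rescaling of $J$ to $[-1,1]$, $P_J$ becomes a polynomial of degree $D$ with $O_d(1)$-bounded coefficients, so Theorem 1.5 of \cite{Yang2} yields a $(P_J,\delta)$-admissible partition $\mathcal{J}_\delta(J)$ of $J$ with an $\ell^2(L^4)$ decoupling of constant $\delta^{-\varepsilon/2}$; since $|r-P_J| \ll \delta$, this $\mathcal{J}_\delta(J)$ is also $(r,O(\delta))$-admissible.

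To pass from $\mathcal{J}_\delta := \bigcup_J \mathcal{J}_\delta(J)$ to the given $\mathcal{I}_\delta$, I dyadically decompose $[-1,1]$ by the size of $|r''|$: $|r''| \sim \sigma$ for $\sigma \in [\delta,1]$ a power of $2$, together with the residual flat region $\{|r''|<\delta\}$. By the $O_d(1)$-bound on zeros and sign changes of $r''$, each level set consists of $O_d(1)$ subintervals on which $|r''|$ is essentially constant, hence on which the local admissible scale $\sqrt{\delta/\sigma}$ is roughly constant. Sub-admissibility of $\mathcal{I}_\delta$ together with this piecewise near-constancy forces each $I \in \mathcal{I}_\delta$ meeting the level $\sigma$ to have length $\sim \sqrt{\delta/\sigma}$ up to a constant; on $\{|r''|<\delta\}$, $r$ is already $\delta$-flat, and the containing $I$ is flat there automatically. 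Consequently every $I$ contains $O_d(1)$ members of $\mathcal{J}_\delta$. Writing $f_K = \mathcal{R}_{K\times\R} f_I$ for $K \subset I$ and invoking the uniform $L^4$-boundedness of the one-dimensional Fourier multiplier $\mathbf{1}_{K\times\R}$ (cf.\ Proposition 6.2 of \cite{LiYang}), we obtain $\sum_{K \subset I}\|f_K\|_{L^4}^2 \lesssim_d \|f_I\|_{L^4}^2$, hence $\bigl(\sum_K\|f_K\|_{L^4}^2\bigr)^{1/2} \lesssim_d \bigl(\sum_I\|f_I\|_{L^4}^2\bigr)^{1/2}$, and assembling everything completes the proof of Theorem \ref{thm_Yang2_rational}.

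The main obstacle is this last comparison. Sub-admissibility on its own only lower-bounds $|I \cup I'|$ and does not a priori upper-bound $|I|$; it is the rigidity of $\mathcal{Q}(d)$ --- concretely, the $O_d(1)$ bound on zeros and sign changes of $r''$, which keeps the local admissible scale piecewise nearly constant --- that upgrades sub-admissibility into a two-sided comparison between $|I|$ and $\sqrt{\delta/|r''|}$. The delicate bookkeeping occurs in the $O_d(1)$ transition zones near the zeros of $r''$, where the scale varies rapidly; these contribute only a bounded-overlap loss that is absorbed into the $\delta^{-\varepsilon}$ budget.
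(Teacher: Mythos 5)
Your proposal takes a genuinely different route from the paper, and the key transfer step has a gap.

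The paper's proof does not use Taylor approximation or a transfer argument at all. It invokes Theorem~1.6 of \cite{Yang2} directly --- which already gives the $\ell^2(L^4)$ decoupling for \emph{any} sub-admissible partition of any ``polynomial-like function'' --- and then simply verifies that every $r\in\mathcal Q(d)$ is polynomial-like of degree $\le 3d$ by checking the sublevel-set conditions of Definition~1.5 of \cite{Yang2} (using that $r''$ is again a bounded-below-denominator rational function of bounded degree, so the sublevel structure and measure bounds reduce to the same facts for the numerator polynomial via Lemma~1.7 of \cite{Yang2}). That route never produces an explicitly admissible partition and never needs to compare it against the given one; the sub-admissible hypothesis is consumed internally by the Pramanik--Seeger iteration underlying Theorem~1.6 of \cite{Yang2}, exactly as in Step~5 of the paper's Proposition~\ref{prop_P_y_large}.

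The gap in your argument is the claim that sub-admissibility of $\mathcal I_\delta$, combined with the $O_d(1)$ bound on zeros and sign changes of $r''$, ``upgrades'' to a two-sided comparison $|I|\sim\sqrt{\delta/|r''|}$. It does not. Piecewise near-constancy of $|r''|$ makes the \emph{local admissible scale} nearly constant, but sub-admissibility only ever constrains unions of \emph{adjacent pairs} from below; it imposes no upper bound on a single $|I|$. Take $r(x)=x^2$ (so $r''\equiv 2$ with no zeros or sign changes whatsoever) and $\mathcal I_\delta=\{[-1,0],[0,1]\}$: this is sub-admissible for all small $\delta$, yet each $I$ has length $1\gg\sqrt{\delta}$ and contains $\sim\delta^{-1/2}$ admissible intervals. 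Your final comparison $\sum_{K\subset I}\|f_K\|_{L^4}^2\lesssim_d\|f_I\|_{L^4}^2$ then fails (the count of $K$'s per $I$ is not $O_d(1)$), and the straightforward Cauchy--Schwarz fix costs $\sqrt{\#\{K\subset I\}}$, which can be $\delta^{-1/4}$ and exceeds the allowed $\delta^{-\varepsilon}$ budget. More generally, an $\ell^2(L^4)$ decoupling into a finer partition does \emph{not} formally imply the same into a coarser one with comparable constant; the coarse sub-admissible case genuinely requires running the argument (e.g.\ Bourgain--Demeter/PS iteration) at the correspondingly coarser scale, not collapsing a fine decoupling. Your Taylor-approximation-and-rescaling setup is itself sound and is in the same spirit as how the paper reduces Theorem~\ref{thm_conj_eps} to Theorem~\ref{thm_main_uniform_decoupling_eps}, but it would need to feed the \emph{given} sub-admissible partition (restricted to $J$, with care at boundaries) directly into Theorem~1.5 of \cite{Yang2}, rather than producing an admissible one and attempting a transfer.
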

Note that $\mathcal Q(d)$ in particular contains all polynomials of degree at most $d$ and with bounded coefficients. The corresponding weaker result has been established in Theorem 1.5 of \cite{Yang2}.
\begin{proof}
First we note that by the triangle inequality we may assume $f$ is Fourier supported in $\mathcal N^{r}_{\Gd}([-1,1])$. Then by Theorem 1.6 of \cite{Yang2} it suffices to check that $r$ is a ``polynomial-like function" of degree at most $4d$, meaning that it satisfies Definition 1.5 of \cite{Yang2}, i.e. for each $\Gs>0$ small enough and each interval $J\sub [-1,1]$, the set
\begin{equation*}
    B(r,\Gs,J):=\{x\in 
    J:|r''(x)|<\Gs (\sup_{x\in J}|r''(x)|+|J|\sup_{x\in J}|r'''(x)|)\}
\end{equation*}
is a disjoint union of at most $O_d(1)$ subintervals of $J$, and satisfies 
\begin{equation*}
    |B(r,\Gs,J)|\lesssim_d \Gs^{\frac 1 {4d}}|J|.
\end{equation*}
However, by rescaling, it suffices to check these conditions for $J=[-1,1]$. Since $r\in \mathcal Q(d)$, by direct computation we can check $r''\in \mathcal Q(4d)$ with suitably chosen $C_{4d},c_{4d}$. Thus by the fundamental theorem of algebra, $B(r,\Gs,[-1,1])$ is a disjoint union of at most $O_d(1)$ subintervals. For the length estimate, we write $r''=p_1/p_2$ with $|p_2|\gtrsim 1$ over $[-1,1]$. Thus, we have $|r''(x)|\sim |p_1(x)|$ over $[-1,1]$. On the other hand, by direct computation,
$$
\sup_{|x|\leq 1}|r'''(x)|=\sup_{|x|\leq 1}\left|\frac {p'_1p_2-p_1p'_2}{p_2^2}\right|\lesssim_d  \sup_{|x|\leq 1}|p'_1(x)|+\sup_{|x|\leq 1}|p_1(x)|\sim \sup_{|x|\leq 1}|p_1(x)|,
$$
where in the last relation we have used Proposition \ref{prop_polycoeff}. Let $p_0$ be a polynomial such that $p_0''(x) = p_1(x)$. The problem has been reduced to showing that $p_0$ is a ``polynomial-like function" of degree at most $4d$, which is implied by Lemma 1.7 of \cite{Yang2}. Thus the result follows.
\end{proof}

\subsection{The case of $O(\Gd)$-vertical neighbourhoods}
In this subsection, we let $P:\R^2\to \R$ be a polynomial of degree at most $d$ and with coefficients bounded by $1$, and assume in addition that $|P_y|\gtrsim 1$ over $[-1,1]^2$. Note that if $\Gd$ is small enough, then using the boundedness of the coefficients of $P$ we have $|P_y|\gtrsim 1$ over $[-1,1]\times [-1-C_0\Gd,1+C_0\Gd]$. Here $C_0$ is a degree constant to be specified later.

Our goal in this subsection is to prove Theorem \ref{thm_2D_general_uniform} in this special case, which further generalises Theorem \ref{thm_Yang2_rational}.

\subsubsection{A structure proposition}
Let $\Gd>0$ be small enough compared to any degree constant, and consider $\{(x,y)\in [-1,1]^2:|P(x,y)|<\Gd\}$ which is the set we will cover.
\begin{prop}\label{prop_delta_nbhd}
There is a family $\{I_k\}$ of $O(1)$ disjoint subintervals of $[-1,1]$, depending on $\Gd$, such that for each $k$, there is a unique smooth function $g_k$ satisfying $P(x,g_k(x))=0$ over $I_k$ and has its derivatives of all orders bounded. Moreover,
\begin{equation}\label{eqn_partition_k}
    \{(x,y)\in [-1,1]^2:|P(x,y)|<\Gd\}=\bigcup_{k}\{(x,y)\in I_k\times [-1,1]:|P(x,y)|<\Gd\}.
\end{equation}
In addition, for each $k$ we have the relations:
\begin{align}
    &\{(x,y)\in I_k\times [-1,1]:|P(x,y)|<\Gd\}\sub \mathcal N^{g_k}_{O(\Gd)}(I_k)\label{eqn_structure_U_delta}\\
    &\mathcal N^{g_k}_{O(\Gd)}(I_k)\sub \{(x,y)\in I_k\times[-1-O(\Gd),1+O(\Gd)]:|P(x,y)|\lesssim\Gd \}.\label{eqn_structure_U_delta_2}
\end{align}
\end{prop}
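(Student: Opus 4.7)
The plan is to exploit the monotonicity of $P(x,\cdot)$ forced by $|P_y|\gtrsim 1$ to extract a single-valued zero locus on each $x$-slice, then carve $[-1,1]$ into the $x$-intervals on which this zero actually lies in a slightly enlarged $y$-window. Concretely, work on the strip $S=[-1,1]\times[-1-C_0\Gd,1+C_0\Gd]$ where $|P_y|\gtrsim 1$, and assume without loss of generality $P_y>0$ there. For each fixed $x\in[-1,1]$, the function $P(x,\cdot)$ is strictly increasing on the vertical slice, so it has at most one zero there. Define
\[
I:=\bigl\{x\in[-1,1]:P(x,-1-C_0\Gd)\le 0\le P(x,1+C_0\Gd)\bigr\}.
\]
Since these inequalities involve univariate polynomials of degree at most $d$, the fundamental theorem of algebra forces $I$ to be a union of $O_d(1)$ closed subintervals $I_1,\dots,I_K$, which become the desired $\{I_k\}$.

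On each $I_k$, monotonicity plus the implicit function theorem produce a unique smooth $g_k\colon I_k\to[-1-C_0\Gd,1+C_0\Gd]$ with $P(x,g_k(x))\equiv 0$. Differentiating this identity yields $g_k'=-P_x/P_y$; since $P_x$ has $O_d(1)$ coefficients and $|P_y|\gtrsim 1$ on $S$, we get $\|g_k'\|_\infty\lesssim_d 1$. Successive differentiation, together with Proposition~\ref{prop_polycoeff} to control the coefficients of $P_{xx},P_{xy},P_{yy},\ldots$, gives uniform bounds on $g_k^{(j)}$ for every $j$, by a straightforward induction.

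It remains to verify the three containments. For \eqref{eqn_partition_k}, the only nontrivial direction is $\subseteq$: if $(x,y)\in[-1,1]^2$ with $|P(x,y)|<\Gd$, then because $P(x,\cdot)$ is strictly monotone with derivative $\gtrsim 1$, a one-dimensional mean value argument locates $y^\ast$ with $P(x,y^\ast)=0$ and $|y-y^\ast|\lesssim\Gd$; choosing $C_0$ large enough forces $y^\ast\in[-1-C_0\Gd,1+C_0\Gd]$, hence $x\in I$. The same mean value argument, combined with the uniqueness of the zero on each $I_k$, gives $y^\ast=g_k(x)$ and proves \eqref{eqn_structure_U_delta}. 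Finally, \eqref{eqn_structure_U_delta_2} follows by reversing the mean value estimate: for $(x,y)\in\mathcal N^{g_k}_{O(\Gd)}(I_k)$ we have $|P(x,y)|=|P(x,y)-P(x,g_k(x))|\le\|P_y\|_{L^\infty(S)}|y-g_k(x)|\lesssim\Gd$, while the trivial bound $|g_k(x)|\le 1+O(\Gd)$ places $y$ in the advertised vertical window.

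I do not anticipate a major obstacle; the only care needed is selecting $C_0$ consistently so that the $O(\Gd)$-vertical moves used in \eqref{eqn_partition_k} land inside the strip where $|P_y|\gtrsim 1$, and tracking that the induction producing bounds on $g_k^{(j)}$ only uses polynomial operations on $P_x/P_y$ restricted to $S$, where denominators stay uniformly away from zero.
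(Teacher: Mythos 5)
Your proof is correct and takes essentially the same approach as the paper's: restrict to the strip where $|P_y|\gtrsim 1$, use vertical monotonicity plus the implicit function theorem to define $g_k$, and use the mean value theorem to transfer between $|P|\lesssim\Gd$ and $|y-g_k(x)|\lesssim\Gd$. The only cosmetic difference is how you extract the intervals $I_k$: you define them as components of $\{x:P(x,-1-C_0\Gd)\le 0\le P(x,1+C_0\Gd)\}$, while the paper works with components of $\{x:\exists\,y_0\in[-1,1],\ |P(x,y_0)|\le\Gd\}$ and reads off the endpoints as roots of $P(x,\pm 1)=\mp\Gd$. Both yield $O_d(1)$ intervals by the fundamental theorem of algebra and support the same argument; your version is arguably a touch cleaner since the zero of $P(x,\cdot)$ is guaranteed to exist on $I_k$ directly from the defining sign condition, and you also spell out the bound on all higher derivatives $g_k^{(j)}$, which the paper only asserts.
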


See Figure \ref{fig_structure} below for an illustration of this proposition.
\begin{figure}[h]
    \centering
    \begin{tikzpicture}[scale =1]
    \draw[line width=8, domain=-0.5:3.54, smooth, variable=\x, black!30] plot ({\x}, {1.5*(-\x*\x*\x*\x/4 + \x*\x + \x - \x*\x*\x/4+\x*\x*\x*\x*\x/17)});
    \draw[line width = 1, black!50] (-0.5,-3.2) -- (3.5,-3.2) -- (3.5,.8) -- (-0.5,.8) --(-0.5,-3.2);
    
    \draw[dashed] (-0.5, -0.3543) --(0.397+0.135 , -0.3543) ;
    \draw[dashed] (0.397+0.135 , 1.4) -- (0.397+0.135 , -2);
    \draw[<->] (-0.5,-2) -- (0.397+0.135 , -2);
    \node[] at (-0.5/2 +0.397/2+0.135/2,-2.25) {$I_1$};
    \draw[ domain=-0.5:0.397+0.135, smooth, variable=\x, black] plot ({\x}, {1.5*(-\x*\x*\x*\x/4 + \x*\x + \x - \x*\x*\x/4+\x*\x*\x*\x*\x/17)});
    \node[] at (0,0.4) {$g_1$};
    
    \draw[dashed] (3.5 , -2.383)--(2.599-0.135 , -2.383)  ;
    \draw[dashed] (2.599-0.135 , -2.7)-- (2.599-0.135 , 1.8);
    \draw[<->] (2.599-0.135 , -2.7) -- (3.5 , -2.7);
    \node[] at (2.599/2-0.135/2 + 3.5/2,-2.95) {$I_2$};
    \draw[ domain=2.599-0.135:3.5, smooth, variable=\x, black] plot ({\x}, {1.5*(-\x*\x*\x*\x/4 + \x*\x + \x - \x*\x*\x/4+\x*\x*\x*\x*\x/17)});
    \node[] at (3.1,-0.2) {$g_2$};
    
    \draw[dashed] (3.5,.8) -- (3.5,1.8)--(-0.5,1.8);
    \draw[<->] (-0.6,-0.5) -- (-0.6,-0.2);
    \node[] at (-1,-0.35) {\tiny $O(\delta)$};
    
    \draw[<->] (3.6,.8) -- (3.6,1.8);
    \node[] at (4,1.3) {\tiny $O(\delta)$};

    \draw[line width=8,black!30] (4,-1) -- (5,-1);
    \node[] at (5.75,-1) {\tiny $:\{|P|<\delta\}$};
    
    \end{tikzpicture}
    \caption{Illustration of the structure proposition}
    \label{fig_structure}
\end{figure}
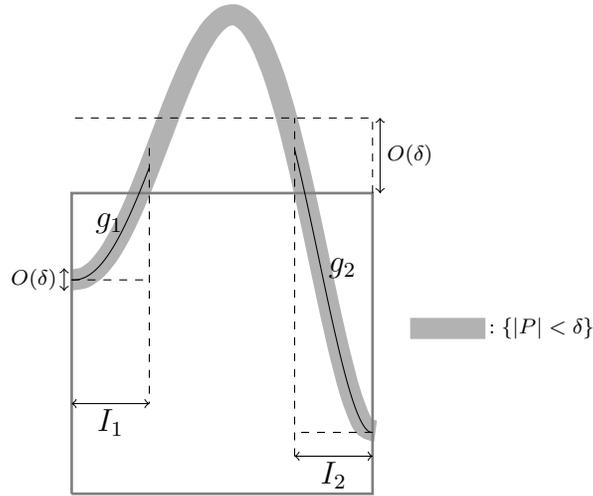
\begin{proof}
We may assume $P_y\gtrsim 1$ over $[-1,1]\times [-1-C_0\Gd,1+C_0\Gd]$.

By the continuity of $P$, let $I_k \subset [-1,1]$ be disjoint closed intervals whose union is 
$$
\{ x\in [-1,1]:\exists \,\,y_0 \in [-1,1] \,\,\text{such that} \,\, |P(x,y_0)|\leq\Gd\}.
$$
The endpoints of these intervals satisfy one of the 4 equalities: $x=-1$, $x=1$, $P(x,1)=-\Gd$ or $P(x,-1)=\Gd$. If either of the latter two polynomial equations holds for all $|x|\le 1$, then $[-1,1]^2\cap\{|P|<\Gd\}=\varnothing$ and so we have nothing to prove. Otherwise, by the fundamental theorem of algebra, there are at most $O(1)$ many solutions to above equations, and thus the number of these intervals $I_k$ is $O(1)$. Also, \eqref{eqn_partition_k} holds automatically.

Now fix $k$. For each $x\in I_k$, the function $y\mapsto P(x,y)$ is strictly increasing with derivative bounded away from $0$. We now show that if $C_0$ is chosen properly, then there is a unique $g_k(x)\in [-1-C_0\Gd,1+C_0\Gd]$ such that $P(x,g_k(x))=0$. Indeed, if $-\Gd<P(x,y_0)<0$, then we compute
$$
P(x,1+C_0\Gd)-P(x,y_0)=\int_{y_0}^{1+C_0\Gd} P_y(x,t)dt\gtrsim C_0\Gd.
$$
For $C_0$ large enough we have $P(x,1+C_0\Gd)>0$. Thus by the intermediate value theorem, there exists a unique $g_k(x)\in [y_0,1+C_0\Gd]$ such that $P(x,g(x))=0$. The case $0\leq P(x,y_0)<\Gd$ is similar. The smoothness of $g_k$ and the boundedness of $g_k'$ follow from the implicit function theorem and $P_y\gtrsim 1$.

Moreover, by the mean value theorem we have
$$
\Gd> |P(x,g_k(x))-P(x,y_0)|=|P_y(x,z)(g_k(x)-y_0)|,
$$
where $z\in [-1-C_0\Gd,1+C_0\Gd]$ and thus $|P_y(x,z)|\gtrsim 1$. Thus $|g_k(x)-y_0|\lesssim \Gd$. This in particular shows that $|g_k(x)|-1\lesssim\Gd$. A similar argument shows that for any $x\in I_k$, $|y|\le 1$ with $|P(x,y)|<\Gd$, we have $|y-g_k(x)|\lesssim \Gd$. This proves \eqref{eqn_structure_U_delta}. On the other hand, if $|y-g_k(x)|\lesssim \Gd$, then $|y|-1\lesssim \Gd$ and
$$
|P(x,y)|=|P(x,y)-P(x,g_k(x))|\lesssim |y-g_k(x)|\lesssim \Gd,
$$
since $P$ has bounded coefficients. This proves \eqref{eqn_structure_U_delta_2}.
\end{proof}

{\bf A reduction.} In view of this proposition and using triangle and H\"older's inequalities, in the following we may assume without loss of generality that there is only one $I=I_k=[-1,1]$, and the implicit function $g=g_k$ is defined throughout $[-1,1]$. Also, by \eqref{eqn_structure_U_delta} and \eqref{eqn_structure_U_delta_2}, to decouple $[-1,1]^2\cap \{|P|<\Gd\}$, it suffices to decouple $\mathcal N^{g}_{O(\Gd)}([-1,1])$.

\subsubsection{Main decoupling argument}

\begin{prop}\label{prop_P_y_large}
Let $\Gd>0$ and $g:[-1,1]\to [-1-C_0\Gd,1+C_0\Gd]$ be as in Proposition \ref{prop_delta_nbhd} (with the reduction right above). Then there is a partition $\mathcal I_\Gd$ of $[-1,1]$ into $(g,\Gd)$-flat intervals $I$ such that for any $f\in L^p(\R^2)$ Fourier supported in $\mathcal N^g_{O(\Gd)}([-1,1])$, we have
\begin{equation}\label{eqn_decoupling_P_y_large}
    \norm f_{L^p(\R^2)}\lesssim_\Ge \Gd^{-\Ge}\left(\sum_{I\in \mathcal I_\Gd}\norm { f_I}^2_{L^p(\R^2)}\right)^{\frac 1 2}.
\end{equation}
Also, for each absolute constant $C\geq 1$, the intervals $\{CI:I\in \mathcal I_\Gd\}$ have bounded overlap.
\end{prop}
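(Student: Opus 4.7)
The plan is to invoke Theorem 1.6 of \cite{Yang2} once we verify that the implicit function $g$ is a \emph{polynomial-like function} of degree $O_d(1)$ in the sense of Definition 1.5 of \cite{Yang2}. This parallels the strategy used to prove Theorem \ref{thm_Yang2_rational} above, with $g$ playing the role of the rational function $r$.

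First, apply Proposition \ref{prop_3.1_Yang} to $g$ at the scale $\delta$ to obtain an admissible partition $\mathcal I_\delta$ of $[-1,1]$. Each $I \in \mathcal I_\delta$ is automatically $(g,O(\delta))$-flat, and sub-admissibility combined with the maximal flatness estimate \eqref{eqn_maximally_flat} yields the bounded overlap of $\{CI : I \in \mathcal I_\delta\}$ for every absolute constant $C \ge 1$: if too many enlarged intervals contained a common point, a chain of adjacent admissible intervals could be merged while remaining $O(\delta)$-flat, contradicting sub-admissibility.

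Second, to check polynomial-likeness we need, for every subinterval $J \subseteq [-1,1]$ and every small $\sigma > 0$, that
\begin{equation*}
    B(g,\sigma,J) = \left\{x \in J : |g''(x)| < \sigma\left(\sup_J |g''| + |J|\sup_J |g'''|\right)\right\}
\end{equation*}
is a union of $O_d(1)$ subintervals of $J$ with $|B(g,\sigma,J)| \lesssim \sigma^{1/D}|J|$ for some $D = D(d)$. Implicit differentiation of $P(x,g(x)) = 0$ gives
\begin{equation*}
    g''(x) = -\frac{P_{xx}P_y^2 - 2 P_{xy}P_xP_y + P_{yy}P_x^2}{P_y^3}\Big|_{y=g(x)},
\end{equation*}
and an analogous formula holds for $g'''$. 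Since $|P_y|\gtrsim 1$, both $|g''|$ and $|g'''|$ are comparable to bivariate polynomials of degree $O(d)$ with $O(1)$ coefficients evaluated along the curve $y = g(x)$; writing $h(x) := N(x,g(x))$ for the numerator polynomial, $B(g,\sigma,J)$ becomes a sublevel set of $|h|$ up to bounded constants. The component count follows from a resultant argument: eliminating $y$ between $P(x,y) = 0$ and $N(x,y) = c$ yields a univariate polynomial in $x$ of degree $O(d^2)$, so $\{h = c\}$ has $O_d(1)$ points and $B(g,\sigma,J)$ has $O_d(1)$ components.

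The main obstacle will be the measure estimate $|B(g,\sigma,J)| \lesssim \sigma^{1/D}|J|$ for the \emph{algebraic} (not polynomial) function $h$. The plan is to exploit the polynomial identity $c_d(x) h^d + c_{d-1}(x) h^{d-1} + \cdots + c_0(x) = 0$ obtained by taking the resultant of $P(x,y)$ and $N(x,y) - h$ in $y$, where each $c_j(x)$ is a polynomial in $x$ of degree $O(d^2)$. On the branch $h = h(x)$, the inequality $|h(x)| < \tau$ then forces $|c_0(x)| \lesssim \tau$ (after a short case analysis handling the degenerate possibility $c_0 \equiv 0$), and a standard Cartan-type bound applied to the polynomial $c_0$, after rescaling $J$ to unit length and using Proposition \ref{prop_polycoeff} to compare coefficient and sup norms, yields the required estimate with $D = O(d^2)$. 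With polynomial-likeness in hand, Theorem 1.6 of \cite{Yang2} then delivers the $\ell^2(L^4)$ decoupling \eqref{eqn_decoupling_P_y_large} over $\mathcal I_\delta$.
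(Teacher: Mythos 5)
Your plan is to show that the implicit function $g$ is itself polynomial-like in the sense of Definition 1.5 of \cite{Yang2} and then invoke Theorem 1.6 of \cite{Yang2} directly. This is a genuinely different strategy from the paper's: the paper \emph{deliberately avoids} trying to establish polynomial-likeness of $g$, instead running a Pramanik--Seeger multi-scale iteration in which, at each intermediate scale $\delta_n$, $g$ is locally approximated (after normalisation) by a rational function $r=-A/B\in\mathcal Q(d)$; polynomial-likeness is only ever verified for these rational approximants, where it reduces to the purely polynomial statement in Lemma 1.7 of \cite{Yang2}, and the decoupling at scale $\delta_N$ is assembled by iterating Theorem \ref{thm_Yang2_rational} over the scales $\delta_n$. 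Your route would be shorter if it worked, but I believe there is a genuine gap in the measure estimate.

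The gap lies in the step that passes from the algebraic function $h(x)=N(x,g(x))$ to the resultant coefficient $c_0$. From the identity $\sum_j c_j(x)h(x)^j=0$ with $|c_j|=O(1)$ and $|h|<1$, you correctly get the inclusion $\{x\in J:|h(x)|<\tau\}\subseteq\{x\in J:|c_0(x)|\lesssim\tau\}$, i.e.\ $|c_0(x)|\lesssim|h(x)|$ pointwise. But this means $\sup_J|c_0|\lesssim\sup_J|h|$, which is the \emph{wrong} direction for the Cartan/Remez bound to help. Applying the sublevel estimate to $c_0$ on $J$ only gives
$$
\bigl|\{x\in J:|c_0(x)|\lesssim\sigma S\}\bigr|\lesssim\Bigl(\frac{\sigma S}{\sup_J|c_0|}\Bigr)^{1/D'}|J|,
$$
where $S=\sup_J|g''|+|J|\sup_J|g'''|$ is the normalising scale, and to extract a power of $\sigma$ you would need $\sup_J|c_0|\gtrsim S$ up to acceptable loss. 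Nothing in your argument supplies such a lower bound, and in general it fails: $c_0=\mathrm{Res}_y(P,N)$ can be anomalously small (or identically zero, if $P$ and $N$ share a factor) on intervals where $h$ itself is not small. So the containment you establish does not produce the $\sigma^{1/D}$ gain required by Definition 1.5, and Theorem 1.6 of \cite{Yang2} cannot be invoked for $g$ on this basis. Relatedly, your treatment of the bounded overlap of $\{CI\}$ is only a heuristic; the paper's Proposition \ref{prop_bounded_overlap} needs both \eqref{eqn_maximally_flat} and the stability of flatness under interval dilation (Lemma \ref{lem_appendix_flat}\eqref{item_CI}, which again uses that $P$ is a polynomial), and the one-sentence merging argument you give does not reproduce this.
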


Proposition \ref{prop_P_y_large} is formulated to facilitate an induction on scales argument used to prove it. However, as in the proof of Theorem \ref{thm_2D_general_uniform}, we shall use the following corollary instead.

\begin{cor}\label{cor_P_y_large}
Let $P:\R^2\to \R$ be a polynomial of degree at most $d$, with $O(1)$ coefficients and satisfying $|P_y|\gtrsim 1$ over $[-1,1]^2$. Then for $\Gd>0$, there is a cover $\mathcal T_\Gd$ of $[-1,1]^2\cap \{|P|<\Gd\}$ by rectangles $T$, such that for each absolute constant $C\geq 1$, the rectangles $CT$ have bounded overlap and on each $CT$ we have $|P|\lesssim \Gd$. In addition, we have the following decoupling inequality:
for any $f\in L^p(\R^2)$ Fourier supported on $[-1,1]^2\cap \{|P|<\Gd\}$, we have
\begin{equation}\label{eqn_decoupling_P_y_large_cor}
    \norm f_{L^p(\R^2)}\lesssim_\Ge \Gd^{-\Ge}\left(\sum_{T\in \mathcal T_\Gd}\norm { \mathcal R_T f}^2_{L^p(\R^2)}\right)^{\frac 1 2}.
\end{equation}
\end{cor}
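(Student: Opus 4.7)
The plan is to deduce the corollary from Proposition \ref{prop_P_y_large} combined with the structure theorem Proposition \ref{prop_delta_nbhd}. The latter decomposes $[-1,1]^2\cap\{|P|<\Gd\}$ into $O(1)$ pieces, each contained in $\mathcal N^{g_k}_{O(\Gd)}(I_k)$ for an implicit function $g_k$ defined on an interval $I_k \sub [-1,1]$. Since the strips $I_k \times \R$ are disjoint and there are only $O(1)$ of them, after writing $f = \sum_k f_{I_k}$ and applying the triangle inequality it suffices to treat each piece separately, so we may assume $f$ is Fourier supported in $\mathcal N^g_{O(\Gd)}([-1,1])$ for a single implicit function $g$ defined on $[-1,1]$.

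With this reduction in hand, I would apply Proposition \ref{prop_P_y_large} to $g$ to obtain a partition $\mathcal I_\Gd$ of $[-1,1]$ into $(g, O(\Gd))$-flat intervals $I$, together with the decoupling inequality \eqref{eqn_decoupling_P_y_large}. For each $I \in \mathcal I_\Gd$ with midpoint $x_I$, I would define the rectangle $T_I$ to be the parallelogram
$$
T_I = \{(x,y) : x \in I,\ |y - g(x_I) - g'(x_I)(x-x_I)| \leq C_1\Gd\}
$$
for a suitable degree constant $C_1$. Because $g$ is $(g, O(\Gd))$-flat on $I$, the vertical neighbourhood $\mathcal N^g_{O(\Gd)}(I)$ is contained in $T_I$ provided $C_1$ is chosen large enough. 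In particular, $\mathcal R_{T_I} f = f_I$ (the strip restriction to $I \times \R$), since by \eqref{eqn_structure_U_delta} the Fourier support of $f_I$ is contained in $\mathcal N^g_{O(\Gd)}(I) \sub T_I$.

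It remains to check the three geometric properties. Coverage of $[-1,1]^2\cap\{|P|<\Gd\}$ follows from \eqref{eqn_structure_U_delta}. For the bounded overlap of $CT_I$: the projection of $CT_I$ to the $x$-axis is (up to constants) $CI$, and since the $g'(x_I)$ are uniformly bounded (as $|P_y| \gtrsim 1$ forces $|g_k'|\lesssim 1$), the vertical thickness of $CT_I$ is $O(\Gd)$. Hence bounded overlap of $CT_I$ reduces to bounded overlap of the intervals $CI$, which is exactly the last statement of Proposition \ref{prop_P_y_large}. The pointwise bound $|P|\lesssim \Gd$ on $CT_I$ follows from the inclusion $CT_I \sub \mathcal N^g_{O(\Gd)}(CI)$ combined with \eqref{eqn_structure_U_delta_2}, after absorbing $C$ into the implicit constant $C_0$ in Proposition \ref{prop_delta_nbhd}.

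Finally, substituting $\mathcal R_{T_I} f = f_I$ into \eqref{eqn_decoupling_P_y_large} yields the desired inequality \eqref{eqn_decoupling_P_y_large_cor}. The main (minor) obstacle I would expect is the geometric bookkeeping required to ensure that $T_I$ is wide enough in the vertical direction to contain all of $\mathcal N^g_{O(\Gd)}(I)$ while the dilates $CT_I$ remain inside a region where $|P| \lesssim \Gd$; this is a matter of propagating the structural inclusions \eqref{eqn_structure_U_delta} and \eqref{eqn_structure_U_delta_2} through the tangent-line approximation afforded by $(g,O(\Gd))$-flatness, and presents no substantive difficulty once the constants are tracked carefully.
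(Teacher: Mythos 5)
Your proposal follows the same route as the paper: structure proposition to reduce to a single implicit function $g$, apply Proposition~\ref{prop_P_y_large} to get the interval partition, fatten each $\mathcal N^g_{O(\Gd)}(I)$ to an essentially rectangular set, and transfer the $\ell^2L^4$ decoupling. Two points need tightening, both of which the paper handles differently.

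First, your $T_I$ is a parallelogram, not a rectangle; since the corollary requires rectangles, you must extend it to an equivalent rectangle via Lemma~\ref{lem_rect=para}. Once you do, the rectangle is no longer contained in the strip $I\times\R$, so the identity $\mathcal R_{T_I}f=f_I$ fails. This is still not fatal: since $\supp\widehat{f_I}\sub\mathcal N^g_{O(\Gd)}(I)\sub T_I$ one has $f_I=\mathcal R_{I\times\R}(\mathcal R_{T_I}f)$, and the sharp-cutoff strip multiplier is bounded on $L^4$, giving $\norm{f_I}_4\lesssim\norm{\mathcal R_{T_I}f}_4$, which is all you need to substitute into \eqref{eqn_decoupling_P_y_large}. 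You also need to note that the $x$-projections of the resulting rectangles are contained in $C'I$ (as in the paper) so that the $100$-overlap bound for rectangles reduces to the bounded overlap of dilated intervals from Proposition~\ref{prop_P_y_large}.

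Second, your argument for $|P|\lesssim\Gd$ on $CT_I$ has a genuine gap: you invoke $CT_I\sub\mathcal N^g_{O(\Gd)}(CI)$ together with \eqref{eqn_structure_U_delta_2}, but \eqref{eqn_structure_U_delta_2} is only stated over the original interval $I_k$ on which $g$ is defined, and $CI$ may protrude past the endpoints of $I_k$ (indeed past $[-1,1]$) where $g$ no longer exists and the structure proposition says nothing; ``absorbing $C$ into $C_0$'' does not repair that, since Proposition~\ref{prop_delta_nbhd} does not furnish an implicit function on a larger interval. The paper instead uses a purely algebraic fact: once $|P|\lesssim\Gd$ on $T_I$, Corollary~\ref{cor_polycoeff} gives $\sup_{CT_I}|P|\sim_{d,C}\sup_{T_I}|P|\lesssim\Gd$ because $P$ is a polynomial of bounded degree. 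You should replace your last step with this argument.
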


\begin{proof}[Proof of Corollary \ref{cor_P_y_large} assuming Proposition \ref{prop_P_y_large}]
As mentioned after Proposition \ref{prop_delta_nbhd}, it suffices to decouple the vertical neighbourhood $\mathcal N^g_{O(\Gd)}([-1,1])$. Proposition \ref{prop_P_y_large} gives a partition $\mathcal I_\Gd$ of $[-1,1]$ into subintervals $I$. Since $g$ is $O(\Gd)$-flat over each $I$ and has bounded derivatives, each $\mathcal N^g_{O(\Gd)}(I)$ can be extended to an equivalent rectangle $T$. Also, the proof of Lemma \ref{lem_rect=para} shows that the projection of $T$ into the $x$ axis is contained in $C'I$ for some absolute constant $C'$, and thus the projection of $CT$ into the $x$ axis is contained in $CC'I$. Since we established in Proposition \ref{prop_P_y_large} the bounded overlap of $CC'I$, the bounded overlap of the rectangles $CT$ then follows.

By the second relation \eqref{eqn_structure_U_delta_2} of Proposition \ref{prop_delta_nbhd}, and Corollary \ref{cor_polycoeff}, we have $|P|\lesssim \Gd$ over $CT$. The decoupling inequality \eqref{eqn_decoupling_P_y_large_cor} follows immediately from \eqref{eqn_decoupling_P_y_large}, the bounded overlap of $\mathcal T_\Gd$ and a standard tiling argument followed by triangle and H\"older's inequalities.
\end{proof}

\subsubsection{Proof of Proposition \ref{prop_P_y_large}}
\begin{proof}
The main idea is a Pramanik-Seeger type iteration, namely, we induct on scales, locally approximate $g$ by rational functions, and apply Theorem \ref{thm_Yang2_rational} in each iteration.

We first consider the base case $c<\Gd<1$ for a small enough degree constant $c$ to be determined. In this case the result holds trivially by taking $\mathcal I_\Gd=\{[-1,1]\}$. 

Now given $0<\Gd<c$. We argue through the following steps.

\underline{Step 1: Construction of partition.}
We apply Proposition \ref{prop_3.1_Yang} to find an admissible partition $\mathcal I_\Gd$ of $[-1,1]$ for $g$ at the scale $\Gd$. For each absolute constant $C\geq 1$, the bounded overlap of intervals $\{CI:I\in \mathcal I_\Gd\}$ is proved in Proposition \ref{prop_bounded_overlap} below. We postpone the proof since it is lengthy and we do not want to overwhelm the reader by too many technicalities here.

\underline{Step 2: Choosing intermediate scales.}
To prove the decoupling inequality, our goal is to approximate $g$ by rational functions at each different scale. First, let us denote by $N$ the smallest integer such that 
$$
\Gd^{(2/3)^N}:=\Gd_0\geq c.
$$
Denote $\Gd_n=\Gd_0^{(3/2)^n}$, and so $\Gd_N=\Gd$ and $\Gd_n=\Gd_{n-1}^{3/2}$ for each $n$. (We remark that a more intuitive choice is such that $\Gd_n=\Gd_{n-1}^2$, but due to some slight technicality it is easier to work with $\Gd_n=\Gd_{n-1}^{2-\Ge_0}$ for any fixed small $\Ge_0$. We take $\Ge_0=1/2$.) 

For each $1\leq n\leq N-1$, we also apply Proposition \ref{prop_3.1_Yang} to find an admissible partition $\mathcal I_n$ of $[-1,1]$ for $g$ at the scale $\Gd_n$. With this, let use denote by $D_n$ the smallest constant such that for all $f$ Fourier supported in $\mathcal N^g_{O(\Gd)}([-1,1])$, we have
\begin{equation}\label{eqn_defn_decoupling_constant}
    \norm f_{L^p(\R^2)}\leq D_n\left(\sum_{I\in \mathcal I_{n}}\norm { f_{I}}^2_{L^p(\R^2)}\right)^{\frac 1 2}.
\end{equation}
Our goal is to show that $D_N\lesssim_\Ge \Gd^{-\Ge}$ for every $\Ge>0$.

\underline{Step 3: Technical treatment of overlap.}
We hope that the partitions $\mathcal I_n$ are nested in the sense that $\mathcal I_n$ is finer than $\mathcal I_{n-1}$ for each $n$. Below we shall prove that without loss of generality this is indeed the case. 

We only show the case $n=N$, as the other cases are the same. Let $\mathcal I_{N-1}=\{I_j\}$ arranged from left to right. Consider $I_1$ and denote by $p$ its right endpoint. It may happen that $p$ coincides with a boundary point of some $I\in\mathcal I_N$, or lies in the interior of some $I\in \mathcal I_N$. The previous case is good. In the latter case, since $\Gd_{N-1}>\Gd_N$, by admissibility such an $I$ will contain at most one such $p$. Let $\tilde I_{1}:=I_{1}\cup I$, which is equal to a union of intervals in $\mathcal I_N$. Since the rough cutoff Fourier multiplier $(x,y)\mapsto 1_{I_1}(x)$ is bounded by an absolute constant $C$ on $L^p(\R^2)$, we have
$$
\norm{f_{I_1}}_{L^p(\R^2)}\leq C\norm {f_{\tilde I_1}}_{L^p(\R^2)}.
$$
For other intervals $I_j$, we potentially need two intervals $I$ and $I'$ to the left and right of $I_j$ respectively. A similar argument shows that $\norm{f_{I_j}}_4\leq C\norm {f_{\tilde I_j}}_4$ where $\tilde I_{j}:=I_{j}\cup I \cup I'$. 

By the argument above, to decouple $[-1,1]$ into intervals $I\in \mathcal I_N$, it suffices to complete two tasks: decoupling $[-1,1]$ into each $I_{j}\in \mathcal I_{N-1}$, and decoupling each $\tilde I_{j}$ into intervals $\{I\in \mathcal I_N,I\sub \tilde I_{j}\}$. Therefore, we may assume without loss of generality that each $\mathcal I_n$ is a refinement of $\mathcal I_{n-1}$, at the cost of losing an absolute constant in the decoupling inequality of each iteration.

\underline{Step 4: Approximation.}
Fix $I_{N-1}\in \mathcal I_{N-1}$, which is a union of intervals in $\mathcal I_N$. We will decouple $I_{N-1}$ into intervals $\{I\in \mathcal I_N:I\sub I_{N-1}\}$.

To this end, noting that $I_{N-1}$ is $(g,\Gd_{N-1})$-flat, we have
$$
\sup_{x\in I_{N-1}}|g(x)-g(x_0)-g'(x_0)(x-x_0)|\leq \Gd_{N-1},
$$
where $x_0$ is the left endpoint of $I_{N-1}$. We may assume without loss of generality that $g(x_0)=g'(x_0)=0$ by a shear transformation plus a translation. The main reason is that 
$$
Q(x,y):=P(x,y+g(x_0)+g'(x_0)(x-x_0))
$$ 
is also a polynomial of degree at most $d$ and with bounded coefficients (since $g,g'$ are bounded), and satisfies $|Q_y|\gtrsim 1$. Thus, we have $|g|\leq \Gd_{N-1}$ over $I_{N-1}$.

Write
$$
P(x,y)=A(x)+yB(x)+y^2C(x,y)
$$
for polynomials $A,B,C$ with bounded coefficients. Using the assumption that $|P_y|\gtrsim 1$ over $[-2,2]^2$, we have $|B(x)|\gtrsim 1$ over $[-2,2]$. 

We will approximate $g$ by the rational function $r(x)=r_{I_N}(x)$ satisfying
\begin{equation}\label{eqn_defn_approximation}
  A(x)+r(x)B(x)=0,  
\end{equation}
that is,
$$
r(x)=-A(x)/B(x).
$$
Note that $r\in \mathcal Q(d)$ as in Definition \ref{defn_rational} if $C_d,c_d$ are suitably chosen. 

Recall by definition $P(x,g(x))=0$, or
\begin{equation*}
    A(x)+g(x)B(x)+g(x)^2 C(x,g(x))=0.
\end{equation*}
Using \eqref{eqn_defn_approximation}, we have
\begin{equation}\label{eqn_approximation_2}
    (g(x)-r(x))B(x)+g(x)^2 C(x,g(x))=0.
\end{equation}
Using $|g(x)|\leq \Gd_{N-1}$ and $|B(x)|\gtrsim 1$, this gives $|g(x)-r(x)|\lesssim \Gd_{N-1}^2=\Gd_N^{4/3}$. Since $\Gd_N=\Gd<c$, if $c$ is chosen to be small enough, then we have $|g(x)-r(x)|<\Gd/2$. Thus, the graph of the function $g$ over $I_{N-1}$ is contained in $\mathcal N^r_{\Gd/2}(I_{N-1})$.

\underline{Step 5: Proving sub-admissibility for $r$.}
We now recall that $\mathcal I_{N}=\mathcal I_\Gd$ is an admissible partition of $[-1,1]$ for $g$ at the scale $\Gd_{N}=\Gd$. In particular, the partition $\mathcal I_{N}(I_{N-1})$ consisting of all members of $\mathcal I_{N}$ that are subintervals of $I_{N-1}$ is an  admissible partition of $I_{N-1}$ for $g$ at the scale $\Gd$. To use Theorem \ref{thm_Yang2_rational}, we need to show that $\mathcal I_{N}(I_{N-1})$ is sub-admissible for $r$ at the scale $\Gd/2$. Suppose not. Then there are two adjacent intervals $J_1,J_2$ in $\mathcal I_{N}(I_{N-1})$ such that $r$ is $\Gd/2$-flat over $J_1\cup J_2$.

Differentiating \eqref{eqn_approximation_2} gives
$$
(g'(x)-r'(x))B(x)+(g(x)-r(x))B'(x)+2g(x)g'(x)C(x,g(x))+g(x)^2 (C(x,g(x)))'=0.
$$
Using $g=O(\Gd_{N-1})$ over $I_{N-1}$ and Part \eqref{item_linear2} of Lemma \ref{lem_appendix_flat}, we have $g'|I_{N-1}|=O(\Gd_{N-1})$. Thus, using $|B|\gtrsim 1$ we also have
\begin{equation}\label{eqn_g'-r'}
    \sup_{x\in I_{N-1}}|g'(x)-r'(x)||I_{N-1}|\lesssim \Gd_{N-1}^2=\Gd^{4/3}.
\end{equation}
Since $|r-g|=O(\Gd^{4/3})$ and we assume $r$ is $\Gd/2$-flat over $J_1\cup J_2\sub I_{N-1}$, by the triangle inequality we have
$$
\sup_{x,x_0\in J_1\cup J_2}|g(x)-g(x_0)-g'(x_0)(x-x_0)|\leq \Gd/2+O(\Gd^{4/3})<\Gd,
$$
and hence each $J_1\cup J_2$ is also $(g,\Gd)$-flat. This is a contradiction since we chose  $\mathcal I_\Gd$ to be admissible (and hence sub-admissible) for $g$ at the scale $\Gd$. As a result, $\mathcal I_{N}(I_{N-1})$ is sub-admissible for $r$ at the scale $\Gd/2$.

\underline{Step 6: Applying Theorem \ref{thm_Yang2_rational}.}
Now it is time we use Theorem \ref{thm_Yang2_rational} to decouple $I_{N-1}$ into the family $\mathcal I_{N}(I_{N-1})$. More precisely, for every function $f$ Fourier supported in $\mathcal N^r_{O(\Gd)}(I_{N-1})$, we have
\begin{equation}\label{eqn_Yang2_rational}
    \norm f_{L^p(\R^2)}\leq C_{d,\Ge}\Gd^{-\Ge}\left(\sum_{I\in \mathcal I_{\Gd}(I_{N-1})}\norm { f_{I}}^2_{L^p(\R^2)}\right)^{\frac 1 2}.
\end{equation}
Since $|g-r|\leq \Gd/2$, the above decoupling inequality will hold for every $f$ Fourier supported on $\mathcal N^g_{O(\Gd)}(I_{N-1})$.

\underline{Step 7: Induction on scales.}
In this way, we have completed the decoupling of each $I_{N-1}\in \mathcal I_{N-1}$ into intervals $\mathcal I_N$. Thus, it remains to decouple $[-1,1]$ into intervals $\mathcal I_{N-1}$. Therefore, the above argument implies the following
bootstrap inequality: for every $\Ge>0$,
$$
D_N\leq C_{d,\Ge}\Gd_N^{-\Ge}D_{N-1}.
$$
Iterating this inequality for $N=O(\log\log \Gd^{-1})$ times until we arrive at $D_0\sim 1$, we have
$$
D_N\leq (\log \Gd^{-1})^{\log C_{d,\Ge}} \Gd^{-3\Ge}D_0\lesssim_\Ge \Gd^{-4\Ge}.
$$
Since $\Ge>0$ is arbitrary, we are done.

\end{proof}

\subsection{Proof of generalised 2D uniform decoupling for polynomials}\label{sub_proof_gen_2D_uniform}
We are now ready to fully prove Theorem \ref{thm_2D_general_uniform}.
\begin{proof}
The proof is by induction on the degree $d$.  For $d=0$ this is trivial. Assuming the result holds for $d-1\geq 0$, we now prove it for degree $d$, through the following steps.

\subsubsection{Dyadic decomposition}
Let $c<1,C>1$ be two degree constants to be determined. With this, we dyadically decompose $[-1,1]^2$ into the following subsets
\begin{align*}
    V_0 &:=\{(x,y)\in [-1,1]^2:|P_x(x,y)|<C\Gd,|P_y(x,y)|<C\Gd\}\\
    V_{\Gs_1,0}&:=\{(x,y)\in [-1,1]^2:C\Gs_1\le |P_x(x,y)|<C(1+2c)\Gs_1,|P_y(x,y)|<\Gd\}\\
    V_{0,\Gs_2}&:=\{(x,y)\in [-1,1]^2:|P_x(x,y)|<\Gd,C\Gs_2\le |P_y(x,y)|<C(1+2c)\Gs_2\}\\
    V_{\Gs_1,\Gs_2}&:=\{(x,y)\in [-1,1]^2:\Gs_1\le |P_x(x,y)|<(1+2c)\Gs_1,\Gs_2\le |P_y(x,y)|<(1+2c)\Gs_2\}
\end{align*}
for $\Gd\leq\Gs_i\lesssim 1$, $i=1,2$. Since we can tolerate logarithmic losses, it suffices to cover each of the above types of subsets (which we will refer to as a {\it dyadic layer} in this proof).

\subsubsection{Decoupling $V_0\cap \{|P|<\Gd\}$}\label{subsub_V_delta}
Since $P_y$ has degree at most $d-1$ and coefficients bounded by $d$, we can apply the induction hypothesis to $P_y$ to decouple $\{|P_y|<C\Gd\}$ into boundedly overlapping rectangles $T'$, on each of which we have $|P_y|\lesssim \Gd$. The remaining steps are as follows.

\underline{Step A1: Rotation and translation.}
Fix a rectangle $T'$. Let $\rho$ be a rotation by no more than $\pi/4$ such that $\rho(T')$ is axis parallel. We still have $|(P\circ \rho)_y|\lesssim \Gd$ since $|P_x|\lesssim \Gd$. For this reason, in the following we may assume without loss of generality that $T'$ is axis-parallel. Thus, we can write $T'=I'\times J'$ where $I',J'$ are subintervals of $[-1,1]$.

\underline{Step A2: Approximation and cylindrical decomposition.}
We now approximate $P(x,y)$ by $P(x,0)$ which depends on $x$ only. Indeed, by the mean value theorem, on $T'$ we have
\begin{equation}\label{eqn_approx_V_delta}
   |P(x,y)-P(x,0)|\leq C|y|\Gd\leq C\Gd. 
\end{equation}
Now fix a point $(x,y)\in T'\cap \{|P|<\Gd\}$, so $|P(x,y)|\leq \Gd$ and $|P(x,0)|\leq 2C\Gd$ by \eqref{eqn_approx_V_delta}. By the fundamental theorem of algebra, the set $\{x\in I':|P(x,0)|<2C\Gd\}$ is a disjoint union of $O(1)$ intervals $I$. In this way, we have decoupled $T'\cap \{|P|<\Gd\}$ into rectangles $T=I\times J'$. 

Now we check that the rectangle $T$ is as required, namely, on $T$ we have $|P|\lesssim \Gd$. Indeed, for $(x,y)\in T$ we have $|P(x,0)|\leq 2C\Gd$, and since $T\sub T'$, by \eqref{eqn_approx_V_delta} we have $|P(x,y)|\leq 3C\Gd$. Thus we are done.

\subsubsection{Decoupling $V_{\Gs_1,\Gs_2}\cap \{|P|<\Gd\}$}\label{subsub_V_sigma}
On $V_{\Gs_1,\Gs_2}\cap \{|P|<\Gd\}$ we have
$$
|P_x(x,y)+ (1+c)\Gs_1|<c\Gs_1\quad \text{or}\quad |P_x(x,y)- (1+c)\Gs_1|<c\Gs_1.
$$
Replacing $x$ by $-x$ if necessary, it suffices to prove the latter case.

\underline{Step B1: Applying induction hypothesis.}
Since $P_x$ has degree at most $d-1$ and coefficients bounded by $d$, we may apply the induction hypothesis to $P_x-(1+c)\Gs_1$ to decouple $\{|P_x-(1+c)\Gs_1|<c\Gs_1\}$ into rectangles $T'_1$, such that $|P_x-(1+c)\Gs_1|\lesssim c\Gs_1$ over $100T'_1$. Thus, if the degree constant $c$ in the definition of $V_{\Gs_1,\Gs_2}$ is chosen to be small enough, then on $100T'_1$ we have $P_x\sim \Gs_1$. Also, note that by the induction hypothesis, the rectangles $100T'_1$ have bounded overlap.

Similarly, we may apply the same argument to $P_y$ to decouple $\{|P_y\pm(1+c)\Gs_2|<c\Gs_2\}$ into rectangles $T'_2$, such that $|P_y|\sim \Gs_2$ over $100T'_2$, and the rectangles $100T'_2$ have bounded overlap. It suffices to prove the cases ``$+$" and ``$-$" separately, and replacing $y$ by $-y$ if necessary, it suffices to prove the ``$-$" case, in which $P_y\sim \Gs_2$ over $100T'_2$.

Now by Lemma \ref{prop_rectangle_intersection}, there is a rectangle $T'$ such that 
$$
T'_1\cap T'_2\sub  T'\sub 100 T'_1\cap 100 T'_2,
$$
on which we have both $P_x\sim \Gs_1$ and $P_y\sim \Gs_2$. By symmetry, it further suffices to assume $\Gs_1\leq \Gs_2$. Also, the rectangles $T'$ have bounded overlap.

\underline{Step B2: Rotation and translation.}
We will need the following lemma, which studies the interaction between partial derivative bounds and rotations.
\begin{lem}\label{lem_partial}
Let $T'$ and $P$ be as in Step B1, and let $|\theta|\leq \pi/4$ and $\rho$ be the counterclockwise rotation by $\theta$. Denote $Q=P\circ \rho$. Then we have
\begin{enumerate}
    \item We have $\max\{Q_x,Q_y\}\lesssim \Gs_2$ over $\rho^{-1}(T')$.
    \item Either $Q_x\sim \Gs_2$ over $\rho^{-1}(T')$, or $Q_y\sim \Gs_2$ over $\rho^{-1}(T')$.
\end{enumerate}

\end{lem}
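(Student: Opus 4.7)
My plan is to apply the chain rule and reduce part (2) to a clean case analysis on $\theta$. Writing $u = x\cos\theta - y\sin\theta$ and $v = x\sin\theta + y\cos\theta$ so that $\rho(x,y)=(u,v)$, the chain rule gives
\[
Q_x = P_u\cos\theta + P_v\sin\theta, \qquad Q_y = -P_u\sin\theta + P_v\cos\theta,
\]
with $P_u, P_v$ evaluated at $\rho(x,y)\in T'$. Part (1) is then immediate from the triangle inequality together with $|P_u|\lesssim \sigma_1 \leq \sigma_2$, $|P_v|\lesssim \sigma_2$, and $|\cos\theta|,|\sin\theta|\leq 1$.

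The content of part (2) is that the \emph{same} component is $\sim\sigma_2$ at every point of $\rho^{-1}(T')$. A pointwise lower bound is in fact immediate, since rotations preserve gradient length: $Q_x^2+Q_y^2 = P_u^2+P_v^2 \sim \sigma_2^2$, so at each point one of $|Q_x|,|Q_y|$ is $\gtrsim \sigma_2$. To promote this to a uniform statement, I will exploit a key input from the dyadic decomposition in Step B1, namely that the signs of $P_u$ and $P_v$ are \emph{constant} on $T'$. After the orthogonal sign-flips $u\mapsto -u$ and/or $v\mapsto -v$, which do not affect the conclusion, I may assume $P_u, P_v > 0$ on $T'$, with constants $a\sigma_i \leq P_{\cdot} \leq b\sigma_i$ for the corresponding partial.

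I will then split on $\theta$. If $\theta\in[-\pi/4, 0]$, then $\sin\theta\le 0$, so both terms of $Q_y = -P_u\sin\theta + P_v\cos\theta$ are nonnegative and $Q_y\ge P_v\cos\theta \gtrsim \sigma_2$. If $\theta\in[0,\pi/4]$ with $\tan\theta \leq a/(2b)$, then using $\sigma_1\leq\sigma_2$ one computes
\[
Q_y \;\geq\; a\sigma_2\cos\theta - b\sigma_1\sin\theta \;\geq\; \sigma_2\cos\theta\bigl(a - b\tan\theta\bigr) \;\geq\; \tfrac{a}{2}\sigma_2\cos\theta \;\gtrsim\; \sigma_2.
\]
If instead $\theta\in[0,\pi/4]$ with $\tan\theta \geq a/(2b)$, then $\sin\theta$ is bounded below by a positive constant and both summands of $Q_x = P_u\cos\theta + P_v\sin\theta$ are nonnegative, so $Q_x \geq P_v\sin\theta \gtrsim \sigma_2$. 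In each case the choice of component depends only on $\theta$, not on the point, so the bound holds uniformly on $\rho^{-1}(T')$.

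The main obstacle is precisely this uniformity in part (2): without the fixed signs of $P_u, P_v$ guaranteed by Step B1, the pointwise length identity $Q_x^2+Q_y^2\sim\sigma_2^2$ alone would not single out one component that is $\sim \sigma_2$ everywhere. Once that input is in hand, the remaining case analysis and the upper bound in part (1) are routine.
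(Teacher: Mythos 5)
Your proof is correct and takes essentially the same route as the paper: chain rule to compute $Q_x,Q_y$, then a case split on $\theta$ alone (not on the point) so that the favoured component is the same across all of $\rho^{-1}(T')$. The only real difference is the location of the split — you place it adaptively at $\tan\theta = a/(2b)$, which is somewhat more robust to the implicit comparability constants in $P_x\sim\sigma_1$, $P_y\sim\sigma_2$, whereas the paper splits at the fixed angle $\pi/6$ and relies (implicitly) on the degree constant $c$ from Step B1 being small enough to make those constants close to $1$.
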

\begin{proof}
By direct computation, 
$$
Q_x=(P_x\circ \rho) \cos\theta+(P_y\circ \rho) \sin \theta, \quad Q_y=-(P_x\circ \rho) \sin\theta+(P_y\circ \rho) \cos \theta.
$$
The upper bound $\max\{Q_x,Q_y\}\lesssim \Gs_2$ is trivial. For the lower bound of $Q_y$, we have two cases.
\begin{itemize}
    \item If $|\theta|\leq \pi/6$, then $|Q_y|\gtrsim \frac{\sqrt 3}2 \Gs_2-\frac 1 2\Gs_1\gtrsim \Gs_2$.
    \item If $\pi/6<|\theta|\leq \pi/4$, then
    \begin{itemize}
        \item If $\theta\geq 0$, then $Q_x\geq P_y/2\sim \Gs_2$.
        \item If $\theta<0$, then $Q_y\geq P_y/\sqrt 2\sim \Gs_2$.
    \end{itemize}
\end{itemize}
\end{proof}

We now proceed in similar ways as when we decouple $V_0\cap \{|P|<\Gd\}$ in Section \ref{subsub_V_delta}. First, similar to Step A1, we may apply a rotation by no more than $\pi/4$ to make $T'$ axis parallel. Using Lemma \ref{lem_partial}, after the rotation we have $Q_x\lesssim Q_y\sim \Gs_2$, or $Q_y\lesssim Q_x\sim \Gs_2$. By symmetry, we only prove the former case. Also, by abuse of notation we may assume a priori that $T'=I'\times J'$ where $I',J'$ are subintervals of $[-1,1]$, and on $T'$ we have $P_x\lesssim P_y\sim \Gs_2$. Write $J'=[-\eta,\eta]$ for future reference.

\underline{Step B3: Approximation and cylindrical decomposition.}
Next, similar to Step A2, we approximate $P(x,y)$ by $P(x,0)$ with error $O(\eta\Gs_2)$. We have two cases. 

If $\eta\Gs_2\leq \Gd$, then $P(x,y)$ can be approximated by $P(x,0)$ with an error of $O(\Gd)$. Thus, the result follows from the same proof in the end of Step A2.

If $\eta\Gs_2>\Gd$, we fix a point $(x,y)\in \{|P|<\Gd\}\cap T'$, so $|P(x,y)|<\Gd$ and thus $|P(x,0)|\lesssim \eta\Gs_2$. By the fundamental theorem of algebra, the set $\{x\in I':|P(x,0)|\lesssim \eta\Gs_2\}$ is a disjoint union of $O(1)$ intervals $I''$. Thus, we have decomposed $T'$ into rectangles $T''=I''\times [-\eta,\eta]$. Also, by the approximation we have $|P|\lesssim \eta\Gs_2$ over $T''$.

\underline{Step B4: Rescaling.}
We need to decouple $T''$ further. To do this, we do a rescaling. Let $l$ be a linear map that maps $[-1,1]$ to $I''$, and consider the polynomial $P(lx,\eta y)$, which is essentially bounded by $\eta\Gs_2$ over $[-1,1]^2$, and hence all its coefficients are essentially bounded by $\eta\Gs_2$ by Proposition \ref{prop_polycoeff}. Define
$$
\tilde P(x,y)= (\eta\Gs_2)^{-1}P(lx,\eta y)
$$
so that $\tilde P$ has bounded coefficients. Now recall that $P_y\sim \Gs_2$ on $T'$, and so $P_y\sim \Gs_2$ on $T''$ since $T''\sub T'$. Thus $\tilde P_y\sim 1$ over $[-1,1]^2$.

\underline{Step B5: Applying Corollary \ref{cor_P_y_large}.}
Now we apply Corollary \ref{cor_P_y_large} to $\tilde P$ to decouple $\{|\tilde P|< (\eta\Gs_2)^{-1}\Gd\}$ into boundedly overlapping rectangles, on each of which we have $|\tilde P|\lesssim (\eta\Gs_2)^{-1}\Gd$. Reversing the rescaling, we obtain a boundedly overlapping cover of $\{|P|<\Gd\}$ by rectangles on each of which $|P|\lesssim \Gd$. 

\subsubsection{Decoupling $V_{\Gs_1,0}$ and $V_{0,\Gs_2}$}
By symmetry, we only mention how to decouple $V_{0,\Gs_2}$. Apply the induction hypothesis in a way similar to Step B1 to obtain rectangles $T'$ on which $P_x\lesssim \Gd$ and $P_y\sim C\Gs_2$. Since $\Gs_2\geq \Gd$, if $C$ is initially chosen to be large enough, then on $T'$ we have $P_y\geq 2P_x$. Thus, similar to Step B2, we perform a rotation $\rho$ by no more than $\pi/4$ so that $T'$ becomes axis parallel, and we have $(P\circ \rho)_y\gtrsim \Gs_2$. The remaining steps are the same as Steps B3, B4 and B5.

\subsubsection{Bounded overlap}
Lastly, we need to show the bounded overlap of $\mathcal T_\Gd$. First, by Corollary \ref{cor_P_y_large}, the family of rectangles $T$ covering $V_0$ obeys that $100T$'s have bounded overlap. The same holds true for rectangles covering a single dyadic layer of each of the other 3 types, namely $V_{\Gs_1,0},V_{0,\Gs_2},V_{\Gs_1,\Gs_2}$.

We still need to show the bounded overlap among the enlarged rectangles covering different dyadic layers. We first consider two different dyadic layers of the form $V_{\Gs_1,\Gs_2}$, $V_{\Gs'_1,\Gs'_2}$. If $T$ is a rectangle in the final cover $\mathcal T_\Gd$ of $V_{\Gs_1,\Gs_2}$, then the proof shows that on $100T$ we have $|P_x|\sim \Gs_1$, $|P_y|\sim \Gs_2$. Similarly, if $S$ is a rectangle in the final cover $\mathcal T_\Gd$ of some other $V_{\Gs'_1,\Gs'_2}$, then on $100S$ we have $|P_x|\sim \Gs'_1$, $|P_y|\sim \Gs'_2$. If $\Gs_1\not\sim \Gs_2$ or $\Gs_2\not\sim\Gs'_2$, then $100T\cap 100S=\varnothing$. This ensures that the 100-enlarged rectangles covering different dyadic layers of the form $V_{\Gs_1,\Gs_2}$ have bounded overlap. In similar ways, we can show the bounded overlap among the 100-enlarged rectangles covering different dyadic layers of all four types $V_0$, $V_{\Gs_1,0}$, $V_{0,\Gs_2}$ and $V_{\Gs_1,\Gs_2}$. Hence the result follows.


\end{proof}

\subsection{Technicalities}
In this subsection, we deal with some purely technical issues that were left behind. We choose to include them here to avoid distracting the reader from the key concepts in our main proof.
\subsubsection{Intersection of rectangles}
Rectangles are a main geometric object in decoupling theory and many other related topics such as restriction and Kakeya problems. However, the intersection of rectangles is not guaranteed to be a rectangle, and this gives rise to some minor technicalities, which we discuss below.

\begin{lem}\label{lem_rect=para}
For every parallelogram $P\sub \R^2$, there is a rectangle $T$ such that $P\sub T\sub 3P$.
\end{lem}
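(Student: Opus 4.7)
The plan is a direct coordinate computation, where the only non-trivial choice is to align the candidate rectangle with the \emph{longer} pair of sides of $P$. After a translation I will assume the center of $P$ is the origin, so $P$ is the image under the map $(s,t)\mapsto s\vec u + t\vec v$ of the square $[-a/2,a/2]\times[-b/2,b/2]$ for some unit vectors $\vec u,\vec v$ and side-lengths $a,b>0$. By relabeling I assume $a\ge b$, by rotating I assume $\vec u=(1,0)$, and by replacing $\vec v$ with $-\vec v$ if necessary I assume the angle $\theta$ between $\vec u$ and $\vec v$ lies in $(0,\pi/2]$, so in particular $\cos\theta\ge 0$.

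Next I would take $T$ to be the smallest axis-aligned bounding rectangle of $P$, namely
\[
T=\Bigl[-\tfrac{a}{2}-\tfrac{b}{2}\cos\theta,\ \tfrac{a}{2}+\tfrac{b}{2}\cos\theta\Bigr]\times\Bigl[-\tfrac{b}{2}\sin\theta,\ \tfrac{b}{2}\sin\theta\Bigr].
\]
The inclusion $P\subseteq T$ is immediate because the four vertices $\pm\tfrac{a}{2}\vec u\pm\tfrac{b}{2}\vec v$ of $P$ all lie in $T$ and $T$ is convex.

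For $T\subseteq 3P$, both sets are convex so it suffices to check that the four vertices of $T$ lie in $3P$. A point $(x,y)$ lies in $3P$ iff, when written as $s\vec u+t\vec v$, one has $|s|\le 3a/2$ and $|t|\le 3b/2$. Using $t=y/\sin\theta$ and $s=x-y\cot\theta$, I would compute that the vertex $(\tfrac{a}{2}+\tfrac{b}{2}\cos\theta,\ \tfrac{b}{2}\sin\theta)$ corresponds to $(s,t)=(\tfrac{a}{2},\tfrac{b}{2})$, which is even inside $P$, and that the vertex $(\tfrac{a}{2}+\tfrac{b}{2}\cos\theta,\ -\tfrac{b}{2}\sin\theta)$ corresponds to $(s,t)=(\tfrac{a}{2}+b\cos\theta,\ -\tfrac{b}{2})$. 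The remaining two vertices are handled symmetrically, and the only nontrivial inequality to check is $\tfrac{a}{2}+b\cos\theta\le \tfrac{3a}{2}$, i.e.\ $b\cos\theta\le a$, which follows at once from $b\le a$ and $\cos\theta\le 1$.

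There is essentially no analytic obstacle; the only genuinely important point is the WLOG step of orienting the rectangle parallel to the \emph{longer} side direction, without which the inequality $b\cos\theta\le a$ would degenerate into the false inequality $a\cos\theta\le b$ when $P$ is very eccentric. The constant $3$ is in fact sharp in this alignment, attained in the limit $b=a$ and $\theta\to 0$.
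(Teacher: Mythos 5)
Your proof is correct and follows essentially the same elementary route as the paper: normalize coordinates, take the bounding rectangle aligned with one pair of sides of $P$, and verify $T\subseteq 3P$ by checking vertices. The only difference is cosmetic --- you align $T$ with the longer side direction via a single WLOG, whereas the paper (in its normalization $|x|\le 1$, $|y-mx|\le b$) splits into the cases $m\le b$ and $m>b$ and aligns differently in each; both choices work and yield the factor $3$.
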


\begin{proof}
If $P$ is a rectangle then we have nothing to show. If not, then by rotation and rescaling we may represent $P$ by the inequalities 
$$
|x|\leq 1, \quad |y-mx|\leq b
$$
where $0<m<\infty$, $b>0$. If $m\leq b$, then we extend $P$ to a rectangle $T$ along the sides $x=\pm 1$. If $m>b$, then we extend $P$ to a rectangle $T$ along the sides with slope $m$. In either case, the ratio of the extended part is at most $1$. The situations are illustrated in Figures \ref{fig_para=rect} and \ref{fig_para=rect2} below. We leave the details of the computation to the reader.

\end{proof}

\begin{figure}[h]
    \begin{minipage}[t]{0.45\textwidth}
    \centering
    \begin{tikzpicture}
    \draw (0,0) -- (0,3) -- (6,4) -- (6,1) -- (0,0) ;
    \draw (2,4/3) -- (2,7/3)  -- (4,8/3) -- (4,5/3)-- (2,4/3);
    \draw[dashed] (4,8/3) -- (2,8/3) -- (2,4/3) -- (4,4/3) -- (4,8/3);
    \node[] at (3.2,2.9) { $T$};
    \node[] at (4.2,2.2) {$P$};
    \node[] at (4.2,0.3) {$3P$};
    \node[] at (0,-.5) {};
    \end{tikzpicture}
    \caption{The case $m \leq b$}
    \label{fig_para=rect}
    \end{minipage}
    \hfill
    \begin{minipage}[t]{0.45\textwidth}
    \centering
    \begin{tikzpicture}
    \draw (0,0) -- (3,1) -- (3,6) -- (0,5)--(0,0) ;
    \draw (1,2) -- (2,7/3) -- (2,4) -- (1,11/3) -- (1,2);
    \draw[dashed] (1,2) -- (2.5,2.5) -- (2,4)--(.5,3.5) -- (1,2);
    \node[] at (2.4,3.5) { $T$};
    \node[] at (1.5,1.9) {$P$};
    \node[] at (3.4,3) {$3P$};
    \end{tikzpicture}
    \caption{The case $m \geq b$}
    \label{fig_para=rect2}
    \end{minipage}
\end{figure}

The following proposition is used in Section \ref{sub_proof_gen_2D_uniform}.
\begin{lem}\label{prop_rectangle_intersection}
Let $T_1,T_2\sub \R^2$ be rectangles with nonempty intersection. Then there is a rectangle $T$ such that 
$$
T_1\cap T_2\sub T\sub 100T_1\cap 100T_2.
$$
\end{lem}
\begin{proof}
Firstly, by rescaling, translation and Lemma \ref{lem_rect=para} it suffices to prove the case when $T_1=[-1,1]^2$. Here we use Lemma \ref{lem_rect=para} twice and so we lose a factor of at most $9$. We also used the following simple facts. First, for any invertible affine transformation $L:\R^2\to \R^2$, any parallelogram $P$ and any $C\geq 1$, we have $L(CP)=CL(P)$. Second, using convexity, it is direct to show that if $P_1\sub P_2$ are parallelograms and $C\geq 1$, then $CP_1\sub CP_2$ (the dilations are with respect to different centres) . We leave the details of the reduction to the reader.

Now we come to the main proof. Let $T_2$ have side lengths $l_1\le l_2$. We distinguish the following cases.
\begin{enumerate}
    \item If $l_1\le l_2\le 2$, then using $T_1\cap T_2\neq\varnothing$ we see that $T_2\sub (1+2\sqrt 2)T_1$.
    \item If $2\le l_1\le l_2$, then similarly we have $T_1\sub (1+2\sqrt 2)T_2$.
    \item If $l_1\le 2\le l_2$, we consider a tiling of $T_2$ by squares $Q$ of side length $l_1$ (with the same orientation as the sides of $T_2$). Define $T$ to be the union of all such $Q$'s that intersect $T_1\cap T_2$, and thus $T$ is a rectangle and trivially satisfies $T_1\cap T_2\sub T$.
    
    Now given $x\in T$. Then there is some $Q\sub T$ such that $x\in Q$, and $Q\cap T_1\neq \varnothing$. Since $Q$ has side length $l_1\le 2$, we see that $x\in (1+2\sqrt 2)T_1$. Also, $Q\cap T_2\neq \varnothing$, and since $l_1\le l_2$ we see that $x\in 3T_2$. Thus we have $T\sub (1+2\sqrt 2)T_1\cap (1+2\sqrt 2)T_2$. 
\end{enumerate} 
Combining the reduction in the first part we see that we lose a factor at most $9\times (1+2\sqrt 2)<100$. In particular, the proposition holds.
\end{proof}

\subsubsection{Admissible partitions and bounded overlap}\label{sec_Yang_cover}
We prove the following proposition, which completes the proof of the bounded overlap in Proposition \ref{prop_P_y_large}. 

Recall the setup: $P:\R^2\to \R$ is a polynomial of degree at most $d$ and with bounded coefficients. Let $\Gd>0$ and $g$ be the smooth function with bounded derivatives of all orders as in Proposition \ref{prop_P_y_large} where $P(x,g(x))=0$ over $[-1,1]$. 

\begin{prop}\label{prop_bounded_overlap}
Suppose $\mathcal I_\Gd$ is an admissible partition of $[-1,1]$ for $g$ at the scale $\Gd$ such that for each $I\in \mathcal I_\Gd$ we also have \eqref{eqn_maximally_flat}. Then for each absolute constant $C\ge 1$, the intervals $\{CI:I\in \mathcal I_\Gd\}$ have bounded overlap.
\end{prop}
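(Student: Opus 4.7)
The strategy is to exploit the algebraic origin of the implicit function $g$ from $P(x,g(x))=0$: the second derivative $g''$ is essentially a rational expression in $(x,g(x))$ of bounded degree, so its sublevel sets and monotonicity intervals have bounded combinatorial complexity. Bounded overlap then reduces to a geometric count of admissible intervals on each monotonic piece of $|g''|$. Concretely, differentiating $P(x,g(x))=0$ twice and using $|P_y|\gtrsim 1$ on the relevant strip yields
$$g''(x)=-\frac{H(x,g(x))}{P_y(x,g(x))^3},\qquad H:=P_{xx}P_y^2-2P_{xy}P_xP_y+P_{yy}P_x^2,$$
with $H$ a polynomial of degree $O_d(1)$ and $O_d(1)$ coefficients, so $|g''(x)|\sim|H(x,g(x))|$. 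Bezout's theorem applied to $P=0$ and $H=\pm\lambda$ shows that $\{x\in[-1,1]:|g''(x)|<\lambda\}$ is a union of $O_d(1)$ intervals for every $\lambda>0$. An analogous computation gives $g'''=-K(x,g(x))/P_y(x,g(x))^5$ for some polynomial $K$ of degree $O_d(1)$, and Bezout again provides $O_d(1)$ zeros of $g'''$ on $[-1,1]$; combined with $O_d(1)$ zeros of $g''$ itself, we split $[-1,1]$ into $M=O_d(1)$ intervals $J_1,\ldots,J_M$ on which $|g''|$ is strictly monotonic (the degenerate case $g''\equiv 0$ makes $g$ linear and is trivial).

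Next I would sharpen the length estimate on admissible intervals. Property \eqref{eqn_maximally_flat} gives $\sup_I|g''|\cdot|I|^2\gtrsim\Gd$, while $(g,\Gd)$-flatness of $I$ combined with Taylor's formula gives $\inf_I|g''|\cdot|I|^2\lesssim\Gd$. Using that $g$ is polynomial-like in the sense of Definition 1.5 of \cite{Yang2} (verified via the explicit formula for $g''$, analogously to the argument in the proof of Theorem \ref{thm_Yang2_rational}), $g''$ has bounded oscillation on each admissible $I$, so $\sup_I|g''|\sim\inf_I|g''|$ and therefore $\sup_I|g''|\cdot|I|^2\sim\Gd$. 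For the counting step, fix $x_0\in[-1,1]$ and $C\ge 1$ and set $\mathcal{F}=\{I\in\mathcal{I}_\Gd:x_0\in CI\}$. Decompose $\mathcal{F}=\mathcal{F}_\partial\sqcup\bigsqcup_m\mathcal{F}_m$ with $\mathcal{F}_m=\{I\in\mathcal{F}:I\subset J_m\}$ and $\mathcal{F}_\partial$ collecting the intervals straddling a $J_m$-boundary; then $|\mathcal{F}_\partial|\le M-1=O_d(1)$. On a $J_m$ where $|g''|$ is increasing, for adjacent $I,I'\in\mathcal{I}_\Gd$ in $J_m$ with $I$ to the left of $I'$, monotonicity gives $\sup_{I'}|g''|\ge\sup_I|g''|$, which combined with $\sup_I|g''|\cdot|I|^2\sim\Gd$ yields $|I'|\le|I|$ up to a bounded constant. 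Listing intervals of $\mathcal{F}_m$ to the right of $x_0$ in left-to-right order as $I_1,I_2,\ldots$, the condition $x_0\in CI_j$ forces $\sum_{i<j}|I_i|\le(C-1)|I_j|/2$; together with $|I_i|\gtrsim|I_j|$ for $i<j$ this forces $j=O_C(1)$. A symmetric argument on the left side yields $|\mathcal{F}_m|=O_C(1)$, whence $|\mathcal{F}|=O_{C,d}(1)$.

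The main obstacle is the sharp two-sided relation $\sup_I|g''|\cdot|I|^2\sim\Gd$ above, as opposed to the easy one-sided bounds for $\sup$ and $\inf$ separately. This requires controlling the oscillation of $g''$ on each admissible interval via the polynomial-like property of $g$. Without this control, one only obtains $|I'|\le\kappa|I|$ with $\kappa$ possibly exceeding $1$, and the geometric-sum argument in the counting step would need an additional subdivision of each $J_m$ into $O_d(1)$ sub-pieces on which $|g''|$ varies by at most a fixed factor before running the count.
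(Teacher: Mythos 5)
Your approach is genuinely different from the paper's: you control the combinatorial complexity of $g''$ via Bezout-type counts and then run a counting argument on each monotone piece of $|g''|$, whereas the paper argues directly and never needs a complexity bound. There is, however, one real error in the proposal: the intermediate claim that $\sup_I|g''|\sim\inf_I|g''|$ on each admissible $I$ is false. For instance with $g(x)=x^3$, the admissible interval containing $0$ has $\inf_I|g''|=0$ but $\sup_I|g''|\sim\Gd^{1/3}$. Fortunately, what you actually use in the counting step is not this but only the two-sided bound $\sup_I|g''|\,|I|^2\sim\Gd$, and that \emph{is} true: the lower bound is exactly \eqref{eqn_maximally_flat}, and the upper bound $\sup_I|g''|\,|I|^2\lesssim\Gd$ holds not because $\sup$ and $\inf$ are comparable but because $\Gd$-flatness of $g$ over $I$ together with the polynomial structure $P(x,y)=A(x)+yB(x,y)$ gives pointwise control on $g''$ over all of $I$ (and even over $CI\cap[-1,1]$); this is precisely the implication \eqref{item_linear}~$\Rightarrow$~\eqref{item_quadratic} in Lemma \ref{lem_appendix_flat}. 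With that substitution your argument closes, and the worry in your final paragraph evaporates: the constant in $|I'|\lesssim|I|$ does not drift within a monotone piece.

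For comparison, the paper's route is shorter and bypasses Bezout entirely. Fixing $x_0=0$ and listing the intervals $I_1,\ldots,I_N\in\mathcal I_\Gd$ to the right of $0$ with $0\in CI_n$, disjointness plus $0\in CI_n$ forces $|I_n|\geq 2C^{-1}\sum_{k<n}|I_k|$, hence $|I_N|$ is exponentially larger than $|I_1|$ in $N$. Part \eqref{item_quadratic} of Lemma \ref{lem_appendix_flat} applied to $I_N$ then makes $\sup_{I_N}|g''|\,|I_1|^2$ exponentially small in $N$ relative to $\Gd$; Part \eqref{item_CI} transfers this to $CI_N\cap[-1,1]\supset I_1$, and \eqref{eqn_maximally_flat} for $I_1$ then forces $N=O_C(1)$. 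The only structural input is Lemma \ref{lem_appendix_flat}; no decomposition of $[-1,1]$ into monotone pieces of $|g''|$ is needed.
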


To this end, we need the following lemma.
\begin{lem}\label{lem_appendix_flat}
For any interval $I\sub [-1,1]$, the following are equivalent.
\begin{enumerate}
    \item \label{item_linear} $g$ is $O(\Gd)$-flat over $I$, that is,
    $$
    \sup_{x,x_0\in I}|g(x)-g(x_0)-g'(x_0)(x-x_0)|\lesssim\Gd.
    $$
    \item \label{item_quadratic} We have
    \begin{equation*}
     \sup_{x\in I} |g''(x)||I|^2\lesssim \Gd. 
    \end{equation*}
    \item We have \label{item_linear2}
    \begin{equation*}
        \sup_{x,x_0\in I}|g'(x)-g'(x_0)||I|\lesssim \Gd.
    \end{equation*}
    \item \label{item_CI} For any absolute constant $C\geq 1$, $g$ is $O(\Gd)$-flat over $CI\cap [-1,1]$.
\end{enumerate}
\end{lem}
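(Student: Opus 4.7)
The plan is to prove the four characterizations of $O(\Gd)$-flatness equivalent by direct Taylor expansion, with implicit constants depending on the degree $d$. The key structural input is that $g$, being the implicit function defined by $P(x,g(x))=0$ with $|P_y|\gtrsim 1$ and $P$ a polynomial of bounded coefficients and degree at most $d$, is real-analytic with all its derivatives uniformly bounded on $[-1,1]$ by constants depending only on $d$.

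The easy implications are straightforward. For $(2)\Rightarrow(1)$, Taylor's theorem with Lagrange remainder gives $g(x)-g(x_0)-g'(x_0)(x-x_0)=\tfrac12 g''(\xi)(x-x_0)^2$, bounded by $\tfrac12\sup_I|g''|\cdot|I|^2\lesssim\Gd$. For $(2)\Rightarrow(3)$, the fundamental theorem of calculus applied to $g'$ yields $|g'(x)-g'(x_0)|\leq\sup_I|g''|\cdot|x-x_0|$, and multiplying by $|I|$ gives $(3)$. The implication $(4)\Rightarrow(1)$ is immediate by taking $C=1$. Finally, $(2)\Rightarrow(4)$ follows from a case analysis on the size of $|I|^2$ versus $\Gd$: either $|I|^2\lesssim\Gd$ (so the uniform bound $\sup_{CI\cap[-1,1]}|g''|\lesssim_d 1$ directly yields $(2)$ on $CI\cap[-1,1]$, hence $(1)$ there via the first implication), or $|I|^2\gg\Gd$ (where $(2)$ forces $\sup_I|g''|$ to be very small, and a Bernstein-type inequality for the algebraic function $g$ compares $\sup_{CI\cap[-1,1]}|g''|$ with $\sup_I|g''|$).

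The crucial converse $(1)\Rightarrow(2)$ uses the centered second difference at the midpoint $x_m$ of $I$ with $h=|I|/2$. Applying $(1)$ at the pairs $(x_m+h,x_m)$ and $(x_m-h,x_m)$ and summing cancels the $g'(x_m)$ terms, giving $|g(x_m+h)+g(x_m-h)-2g(x_m)|\lesssim\Gd$. Taylor expansion also yields
\[
g(x_m+h)+g(x_m-h)-2g(x_m)=g''(x_m)h^2+O_d(h^4),
\]
hence $|g''(x_m)|\cdot|I|^2\lesssim\Gd+|I|^4$. Extending to $\sup_I|g''|$ via $|g''(x)-g''(x_m)|\lesssim_d|I|$ (from the uniform bound on $g'''$), one obtains $\sup_I|g''|\cdot|I|^2\lesssim\Gd+|I|^3$; when $|I|\lesssim\Gd^{1/3}$ the remainder is absorbed, and otherwise the polynomial-like structure of $g$ (via a Bernstein-type reverse inequality) shows that $(1)$ is itself $\gtrsim|I|^3$. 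The implication $(3)\Rightarrow(2)$ is analogous, using first differences of $g'$ in place of second differences of $g$. The main obstacle is absorbing the Taylor remainder $|I|^3$ in the regime $|I|\gtrsim\Gd^{1/3}$, which genuinely uses the algebraic (rather than merely smooth) nature of $g$.
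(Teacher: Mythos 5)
Your easy implications ($(2)\Rightarrow(1)$, $(2)\Rightarrow(3)$, $(3)\Rightarrow(1)$, $(4)\Rightarrow(1)$) are fine, and the centered-second-difference idea for $(1)\Rightarrow(2)$ correctly yields $|g''(x_m)||I|^2 \lesssim \Gd + |I|^4$ and then $\sup_I|g''|\,|I|^2 \lesssim \Gd + |I|^3$. The problem is the step that is supposed to absorb the $|I|^3$ error. Your claim that ``the polynomial-like structure of $g$ (via a Bernstein-type reverse inequality) shows that $(1)$ is itself $\gtrsim |I|^3$'' when $|I| \gtrsim \Gd^{1/3}$ is false. Take the algebraic function $g(x) = x^d$ (defined by $P(x,y) = y - x^d$, which has $P_y \equiv 1$) and $I = [0,a]$. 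A direct computation gives that the flatness quantity in $(1)$ is $\sim a^d$, while $|I|^3 = a^3$. For any $d \geq 4$ and $a$ small, $a^d \ll a^3$, so the flatness is much smaller than $|I|^3$; meanwhile $a = |I| \gg a^{d/3} = \Gd^{1/3}$, so you are indeed in the problematic regime. In the same example, $\sup_I|g''|\,|I|^2 \sim a^d \sim \Gd$, so $(2)$ does hold, but your chain of estimates only produces the far weaker bound $\lesssim a^3$. In short, there is no lower bound on the flatness of the form $\gtrsim |I|^3$ for algebraic $g$: the implicit function can vanish to arbitrarily high order, and a third-derivative bound alone loses by a macroscopic factor $|I|^{3-d}$. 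A similar, though less severe, issue affects your proposed handling of $(2)\Rightarrow(4)$ in the regime $|I|^2 \gg \Gd$, where the asserted doubling inequality $\sup_{CI\cap[-1,1]}|g''| \lesssim \sup_I|g''|$ is not established for the algebraic function $g''$.

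The paper sidesteps this entirely by exploiting the algebraic structure at the level of the defining polynomial rather than through finite differences of $g$. After a shear normalizing $g(x_0) = g'(x_0) = 0$ (which turns the hypothesis $(1)$ into $|g| \lesssim \Gd$ on $I$), it writes $P(x,y) = A(x) + yB(x,y)$ and deduces $|A| \lesssim \Gd$ on $I$. Because $A$ is a genuine polynomial, Corollary~\ref{cor_polycoeff} (the Bernstein/Markov-type inequality for polynomials, not for $g$ itself) gives $|A'|\lesssim \Gd/|I|$ and $|A''| \lesssim \Gd/|I|^2$ on $I$, and also propagates $|A| \lesssim \Gd$ to $CI$. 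Plugging into the explicit rational expression for $g''$ in terms of $P_x, P_{xx}, P_{xy}, P_y, P_{yy}$ along the graph then yields $|g''|\,|I|^2 \lesssim \Gd$ directly, with no lossy $|I|^3$ remainder, and the same computation works on $CI\cap[-1,1]$, establishing $(4)$. If you want to repair your argument, you would need to iterate the finite-difference idea to all orders up to $d$ and combine it with a polynomial-structure input equivalent to what the paper uses; as written, the gap is genuine.
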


\begin{proof}
\begin{itemize}
    
    \item ``$\eqref{item_linear}\implies \eqref{item_quadratic}, \eqref{item_CI}$" Taking $x_0$ to be the centre of $I$, we have
$$
\sup_{x\in I}|g(x)-g(x_0)-g'(x_0)(x-x_0)|\lesssim \Gd.
$$
Since $g'$ is bounded, by a translation and a shear transform we may assume $g(x_0)=g'(x_0)=0$. Thus $|g(x)|\lesssim \Gd$ over $I$. Write
$$
P(x,y)=A(x)+yB(x,y),
$$
for polynomials $A,B$ with bounded coefficients. Then we have
$$
0=P(x,g(x))=A(x)+g(x)B(x,g(x)).
$$
Since $B$ has bounded coefficients and $g=O(\Gd)$, we have $A=O(\Gd)$ over $I$. Since $A$ is a polynomial, by Corollary \ref{cor_polycoeff} and a simple rescaling we have $|A'(x)||I|\lesssim \Gd$ over $I$. The same reasoning gives $|A''(x)||I|^2\lesssim \Gd$ over $I$.

By direct computation, we have
$$
g''(x)=\frac{P_{xx}P_y^2+P_{x}^2P_{yy}-2P_xP_yP_{xy}}{P_y^3}(x,g(x)).
$$
Thus over $I$ we have
$$
P_x(x,g(x))|I|= A'(x)|I|+g(x)B_x(x,g(x))|I|=O(\Gd).
$$
Similarly, we can show $P_{xy}(x,g(x))|I|=O(\Gd)$ and $P_{xx}(x,g(x))|I|^2=O(\Gd)$ over $I$. Using $P_y\gtrsim 1$, we thus have $g''|I|^2=O(\Gd)$ over $I$, and thus \eqref{item_quadratic} is true.

To prove \eqref{item_CI}, note that by Corollary \ref{cor_polycoeff}, $A=O(\Gd)$ over $I$ implies $A=O(\Gd)$ over $CI$. Following the same argument as above then shows $g''|I|^2=O(\Gd)$ over $CI\cap [-1,1]$. Then \eqref{item_CI} follows from Taylor's theorem.

\item ``$\eqref{item_quadratic}\implies \eqref{item_linear2}\implies \eqref{item_linear}$" This part is an immediate consequence of Taylor's theorem.

\item ``$\eqref{item_CI}\implies \eqref{item_linear}$" This part is trivial.
\end{itemize}
\end{proof}

\begin{proof}[Proof of Proposition \ref{prop_bounded_overlap}]

By our definition of bounded overlap, we will show that every $x\in \R$ lies in at most $O(1)$ many intervals $CI$. Fix $x\in \R$ and assume there is a collection $\mathcal B$ of intervals $I\in \mathcal I_\Gd$ such that $x\in CI$. For simplicity of notation, we write $x=0$. Also, for an interval $I$ we use $c_I$ to denote its centre.

Suppose there are $N$ intervals to the right of $0$ that lie in $\mathcal B$. List them as $I_n$, $1\le n\leq N$ from left to right. Then 
$$
c(I_n) \leq C|I_n|/2.
$$
On the other hand, by disjointness of the intervals $I_n$, we have
$$
c(I_n) \geq  \sum_{k=1}^{n-1} |I_k|.
$$
Thus we have for every $1\leq n \leq N$,
$$
|I_n| \geq 2C^{-1}\sum_{k=1}^{n-1} |I_k| 
$$
Therefore
$$
|I_N| \geq 2C^{-1}\sum_{k=1}^{N-1} |I_k| \geq 2C^{-1} (1+2C^{-1})\sum_{k=1}^{N-2}|I_k|\geq ... \geq 2C^{-1}(1+2C^{-1})^{N-2}|I_1|.
$$

Since $g$ is $\Gd$-flat over $I_N$, using Part \eqref{item_quadratic} of Lemma \ref{lem_appendix_flat} then shows that
\begin{equation*}
   \sup_{x\in I_N}|g''(x)||I_1|^2\lesssim \Gd (1+2C^{-1})^{-N}. 
\end{equation*}
By Part \eqref{item_CI} of Lemma \ref{lem_appendix_flat}, we also have
\begin{equation*}
   \sup_{x\in CI_N\cap [-1,1]}|g''(x)||I_1|^2\lesssim \Gd (1+2C^{-1})^{-N}. 
\end{equation*}
But since $CI_N\cap [-1,1]$ contains $I_1$, we have
\begin{equation*}
   \sup_{x\in I_1}|g''(x)||I_1|^2\lesssim \Gd (1+2C^{-1})^{-N}.
\end{equation*}
Using \eqref{eqn_maximally_flat} for $I_1$ we thus have $N=O_C(1)$. The argument on the left hand side of $0$ is the same, and hence the result follows.

\end{proof}

\section{Proof of uniform decoupling for bivariate polynomials}\label{sec_proof_main}
Assuming Theorem \ref{thm_small_hessian} for the moment, we are ready to prove Theorem \ref{thm_main_uniform_decoupling_eps}. The main idea is a Pramanik-Seeger type iteration. We first construct a suitable cover of $[-1,1]^2$ adapted to $\phi$, and then prove a corresponding decoupling inequality at the end.

Again, all implicit constants in this section are assumed to depend on the degree $d$.

\subsection{The base case}
Let $M=M(d,\Ge)$ be a large constant to be determined. 

Given $0<\Gd<1$. If $\Gd\ge M^{-3}$ then we simply take $\mathcal P_\Gd$ to be the partition of $[-1,1]^2$ into squares of side length $c\Gd^{1/2}$, where $c=c(d)$ is chosen such that each square is $(\phi,\Gd)$-flat. The decoupling inequality is then trivial, since the constant is allowed to depend on both $d,\Ge$. 

\subsection{Finding the cover}\label{sub_cover}
Now let $\Gd<M^{-3}$. Our first step will be finding a cover of $[-1,1]^2$ by $(\phi,\Gd)$-flat rectangles. The overlap and the decoupling inequality will be discussed later.

Recall the exponent $\Ga=\Ga(d)>0$ in Theorem \ref{thm_small_hessian}. Our induction hypothesis is that the conclusion holds at all coarser scales $\Gd'\gtrsim M^\Ga\Gd$.

\subsubsection{Curved part and flat part}\label{subsub_curved_flat}
We first study the Hessian determinant $P:=\det D^2\phi$. Partition $[-1,1]^2$ into two parts $S_{\text{curved}}$ and $S_{\text{flat}}$ as follows:
\begin{align*}
    &S_{\text{curved}}:=\{(x,y)\in [-1,1]^2: |P(x,y)|>M^{-1}\}\\
    &S_{\text{flat}}:=\{(x,y)\in [-1,1]^2: |P(x,y)|\leq M^{-1}\}.
\end{align*}
Consider the partition of $[-1,1]^2$ by squares of side length $\sim \Gd^{1/2}$ so that each of them is $(\phi,\Gd)$-flat, and denote by $\mathcal C$ the collection of those squares $Q$ that intersect $S_{\text{curved}}$. Note that for each $Q\in \mathcal C$, we have $|P(x,y)|>M^{-1}/2$ on $Q$ since we assume $\Gd^{1/2}\ll M^{-1}$ and $\nabla P$ is bounded. If we can find a cover $\mathcal F$ of $S_{\text{flat}}$ by $(\phi,\Gd)$-flat rectangles, then $\mathcal C\cup \mathcal F$ will be the required cover $\mathcal P_\Gd$.

\subsubsection{Covering the flat part by intermediate rectangles}\label{subsub_intermediate}
To cover the flat part, we will first apply Theorem \ref{thm_2D_general_uniform} above to $P$ at $M^{-1}$ to cover $S_{\text{flat}}$ by rectangles $T$ on each of which $|P|\lesssim M^{-1}$. Also, the rectangles $T$ have bounded overlap.

\subsubsection{Rescaling}\label{subsub_rescaling_1}
It remains to further decompose each $T$ into smaller rectangles on each of which $\phi$ is $\Gd$-flat. To do this, we first apply a rescaling as follows.

Denote by $L_2$ the composition of a rotation and a rescaling such that $L_2$ is a bijection from $[-1,1]^2$ to $T$. Equivalently, we may cover $[-1,1]^2$ by boundedly overlapping rectangles on each of which $\phi_1:=\phi\circ L_2$ is $\Gd$-flat, and then apply the inverse of $L_2$. Note that $\phi_1$ has the same degree as $\phi$, and $\sup_{[-1,1]^2}|\phi_1|= \sup_{T}|\phi|$. Also, recalling that $|\det D^2 \phi|\lesssim M^{-1}$ on $T$, we have $|\det D^2 \phi_1|\lesssim M^{-1}$ over $[-1,1]^2$ since $|\det L_2|\leq 1$. By Proposition \ref{prop_polycoeff} we know that all coefficients of $\det D^2\phi_1$ are essentially bounded by $M^{-1}$.

\subsubsection{Applying small Hessian theorem}\label{subsub_small_hessian}
We now apply Theorem \ref{thm_small_hessian} to $\phi_1$ to find a suitable rotation $\rho$ such that
$$
\phi_2(x,y):=\phi_1\circ \rho(x,y)=A(x)+M^{-\Ga} B(x,y),
$$
where $A,B$ have bounded coefficients. It then suffices to cover $[-1,1]^2$ by $(\phi_2,\Gd)$-flat rectangles.

\subsubsection{Applying 2D uniform decoupling}\label{subsub_2D_uniform}
Now we do a cylindrical decoupling at the scale $M^{-\Ga}$. More precisely, we first apply Theorem \ref{thm_Yang2_rational} to the univariate polynomial $A$ to partition $[-1,1]$ into intervals $I$ on each of which $A$ is $\sim M^{-\Ga}$-flat. If $l$ denotes the linear approximation of $A$ at the centre of $I$, then we have $\sup_I |A(x)-l(x)|\lesssim M^{-\Ga}$.

\subsubsection{Rescaling}\label{subsub_rescaling_2}
It remains to cover each such strip $I\times [-1,1]$ by $(\phi_2,\Gd)$-flat rectangles. To achieve this, we first denote by $L_1$ a linear bijection from $[-1,1]$ to $I$, and consider $\phi_2(L_1 x,y)$. Then the polynomial $\phi_2(L_1x,y)-l(L_1x)$ is essentially bounded by $M^{-\Ga}$ in $[-1,1]^2$. By Proposition \ref{prop_polycoeff}, all of its coefficients are essentially bounded by $M^{-\Ga}$. We then define
$$
\phi_3(x,y)=M^{\Ga}(\phi_2(L_1 x,y)-l(L_1 x)),
$$
which has bounded coefficients. Thus it suffices to cover $[-1,1]^2$ by $(\phi_3,\sim M^\Ga\Gd)$-flat rectangles.

\subsubsection{Applying the induction hypothesis}\label{subsub_induction_hypothesis}
We can now use the induction hypothesis to $\phi_3$ at scale $\sim M^\Ga\Gd$ to cover $[-1,1]^2$ by rectangles prescribed in this theorem. Reversing all linear transformations in the previous steps, we have obtained a cover $\mathcal F$ of $[-1,1]^2$ by $(\phi,\Gd)$-flat rectangles, as required.

\subsection{Decoupling inequality}
Having obtained the cover $\mathcal P_\Gd$ at each scale $\Gd$, we now proceed to proving the corresponding decoupling inequality.

Denote by $D(\Gd)$ the smallest constant such that the following inequality holds for every $f$ Fourier supported in $\mathcal N^\phi_{\Gd}([-1,1]^2)$:
\begin{equation}
    \norm {f}_{L^p(\R^3)}\leq D(\Gd)\# \mathcal P_\Gd^{\frac 1 2 - \frac 1 p}\left(\sum_{P\in \mathcal P_\Gd}\norm{f_P}_{L^p(\R^3)}^p\right)^{\frac 1 p}.
\end{equation}
Our goal is to show that $D(\Gd)\leq C_{d,\Ge}\Gd^{-\Ge}$ independent of the choice of $\phi$. To do this, we carefully follow the steps in Section \ref{sub_cover}.

\subsubsection{Curved part: Bourgain-Demeter decoupling}\label{subsub_curved_decoupling}
The decoupling for the curved part in \ref{subsub_curved_flat} is an easy consequence of the standard Bourgain-Demeter decoupling, as in Theorem 1.1 of \cite{BD2017}. That is, for every function Fourier supported in $\mathcal N^\phi_\Gd(\cup \mathcal C)$ we have
\begin{equation}\label{eqn_curved}
    \norm {f}_{L^p(\R^3)}\lesssim_{M,\Ge} \Gd^{-\Ge}\# \mathcal C^{\frac 1 2 - \frac 1 p}\left(\sum_{Q\in \mathcal C}\norm{f_Q}_{L^p(\R^3)}^p\right)^{\frac 1 p}\leq \Gd^{-\Ge}\# \mathcal P_\Gd^{\frac 1 2 - \frac 1 p}\left(\sum_{P\in \mathcal P_\Gd}\norm{f_P}_{L^p(\R^3)}^p\right)^{\frac 1 p}.
\end{equation}
Notice that the decoupling constant $C_{d,M,\Ge}$ can be chosen to be independent of the choice of $\phi$; the proof of this assertion is elementary, and the reader may refer to Section 5 of \cite{Yang2} for a rigorous proof in the 2D case.

In view of the bounded overlap to be established in Section \ref{sec_overlap_proof_main} below, by the triangle and H\"older's inequalities it remains to prove the decoupling inequality for the flat part.

\subsubsection{Intermediate decoupling}\label{subsub_intermediate_decoupling}
Following Theorem \ref{thm_2D_general_uniform} as in Section \ref{subsub_intermediate} and applying Lemma \ref{lem_projection} with $\mathcal A=\mathcal T_{M^{-1}}$ and $\mathcal B=\{\mathcal N^{\phi}_{M^{-1}}(T):T\in \mathcal T_{M^{-1}}\}$, for every function $f$ Fourier supported in $\mathcal N^\phi_\Gd(S_{\text{flat}})$ we have
\begin{equation}\label{eqn_intermediate}
    \norm {f}_{L^p(\R^3)}\lesssim_{\Ge} M^{\Ge}\left(\sum_{T\in \mathcal T_{M^{-1}}}\norm{f_T}_{L^p(\R^3)}^2\right)^{\frac 1 2}.
\end{equation}
Note that the constant is also independent of the choice of $\phi$.

\subsubsection{2D uniform cylindrical decoupling}\label{subsub_2D_uniform cylindrical}
The rescaling and rotation in \ref{subsub_rescaling_1} and \ref{subsub_small_hessian} do not affect the decoupling inequality. The 2D uniform decoupling inequality in \ref{subsub_2D_uniform} implies the following: for every function $f$ Fourier supported in $\mathcal N^{\phi_2}_{O(M^{-\Ga})}([-1,1]^2)$, we have
\begin{equation}\label{eqn_2D_uniform}
    \norm {f}_{L^p(\R^3)}\lesssim_{\Ge} M^{\Ge\Ga}\left(\sum_{I}\norm{f_{I\times \R}}_{L^p(\R^3)}^2\right)^{\frac 1 2}.
\end{equation}
The constant is again independent of the choice of $\phi$.

\subsubsection{Applying the induction hypothesis}
The rescaling in \ref{subsub_rescaling_2} does not affect the decoupling inequality. From the induction hypothesis in \ref{subsub_induction_hypothesis} we have the following: for every function $f$ Fourier supported in $\mathcal N^{\phi_{3,I}}_{\Gd'}([-1,1]^2)$ (note that $\phi_3$ depends on individual choices of $I$) where $\Gd'\sim M^\Ga \Gd$, we have
\begin{equation}\label{eqn_induction_hypothesis}
    \norm {f}_{L^p(\R^3)}\leq D(\Gd')\#\mathcal P^{\frac 1 2 - \frac 1 p}_{\Gd'}\left(\sum_{P'\in \mathcal P_{\Gd',I}}\norm{f_{P'}}_{L^p(\R^3)}^p\right)^{\frac 1 p}.
\end{equation}
Here, $\mathcal P_{\Gd',I}$ is the cover of $[-1,1]^2$ by $(\phi_{3,I},\Gd')$-flat rectangles as constructed in Section \ref{subsub_induction_hypothesis}. Equivalently, by a rescaling we have
\begin{equation}
    \norm{f_{I\times \R}}_{L^p(\R^3)}\leq D(\Gd')\#\tilde{\mathcal P}^{\frac 1 2 - \frac 1 p}_{\Gd',I}\left(\sum_{\tilde P'\in \tilde {\mathcal P}_{\Gd',I}}\norm{f_{\tilde P'}}_{L^p(\R^3)}^p\right)^{\frac 1 p},
\end{equation}
where $\tilde{\mathcal P}_{\Gd,I}$ is a cover of $I\times [-1,1]$ by $(\phi_2,\Gd)$-flat rectangles and $\#\mathcal P_{\Gd'}=\#\tilde{\mathcal P}_{\Gd,I}$. Note that $\cup_I \tilde{\mathcal P}_{\Gd,I}$ is a cover of $[-1,1]^2$ by $(\phi_2,\Gd)$-flat rectangles.

\subsubsection{Combining decoupling estimates}\label{sub_combining_decoupling}
Now we combine the inequalities \eqref{eqn_2D_uniform} and \eqref{eqn_induction_hypothesis} using H\"older's inequality to the sum over $I$. More precisely, let $f$ be Fourier supported in $\mathcal N^{\phi_2}_{O(M^{-\Ga})}([-1,1]^2)$. Then
\begin{align*}
    \norm {f}_{L^p(\R^3)}
    &\lesssim_{\Ge} M^{\Ge\Ga}\left(\sum_{I}\norm{f_{I\times \R}}_{L^p(\R^3)}^2\right)^{\frac 1 2}\\
    &\leq M^{\Ge\Ga}D(\Gd')\left(\sum_{I}\#\tilde{\mathcal P}^{2\left(\frac 1 2 - \frac 1 p\right)}_{\Gd',I}\left(\sum_{\tilde P'\in \tilde {\mathcal P}_{\Gd',I}}\norm{f_{\tilde P'}}_{L^p(\R^3)}^p\right)^{\frac 2 p}\right)^{\frac 1 2}\\
    &\leq M^{\Ge\Ga}D(\Gd')\left[\left(\sum_{I}\left(\#\tilde{\mathcal P}^{2\left(\frac 1 2 - \frac 1 p\right)}_{\Gd',I}\right)^{\frac p {p-2}}\right)^{1-\frac 2 p}\left( \sum_{I}\left(\sum_{\tilde P'\in \tilde {\mathcal P}_{\Gd',I}}\norm{f_{\tilde P'}}_{L^p(\R^3)}^p\right)^{\frac 2 p\cdot \frac p 2}\right)^{\frac 2 p}\right]^{\frac 1 2}\\
    &=M^{\Ge\Ga}D(\Gd')\left[\left(\sum_I \#\tilde {\mathcal P}_{\Gd',I}\right)^{1-\frac 2 p}\left(\sum_I \sum_{\tilde P'\in \tilde {\mathcal P}_{\Gd',I}}\norm{f_{\tilde P'}}_{L^p(\R^3)}^p\right)^{\frac 2 p} \right]^{\frac 1 2}\\
    &=M^{\Ge\Ga}D(\Gd')[\# (\cup_I \tilde{\mathcal P}_{\Gd,I})]^{\frac 1 2-\frac 1 p}\left(\sum_{\tilde P'\in \cup_I \tilde{\mathcal P}_{\Gd,I}}\norm{f_{\tilde P'}}_{L^p(\R^3)}^p\right)^{\frac p 2}.
\end{align*}
Now we can reverse the rescaling and rotation from $\phi_2$ to the original $\phi$. The above decoupling inequality translates to the following: for each $T$ in the flat part $\mathcal T_{M^{-1}}$ we have
\begin{equation}
    \norm{f_T}_{L^p(\R^3)}\lesssim_\Ge M^{\Ge\Ga}D(\Gd') (\# \mathcal P_{\Gd,T})^{\frac 1 2-\frac 1 p}\left(\sum_{P\in \mathcal P_{\Gd,T}}\norm{f_{P}}_{L^p(\R^3)}^p\right)^{\frac p 2},
\end{equation}
where $\mathcal P_{\Gd,T}$ is the cover of $T$ by $(\phi,\Gd)$-flat rectangles $P$.

After combining with \eqref{eqn_intermediate} and using another H\"older's inequality the same way as right above, we have for every $f$ Fourier supported in $\mathcal N^\phi_{\Gd}(S_{\text{flat}})$
\begin{equation}
    \norm f_{L^p(\R^3)}\lesssim_\Ge M^{2\Ge}D(\Gd')\# \mathcal F^{\frac 1 2 - \frac 1 p}\left(\sum_{P\in \mathcal F}\norm{f_{P}}_{L^p(\R^3)}^p\right)^{\frac 1 p},
\end{equation}
where $\mathcal F=\cup_T \mathcal P_{\Gd,T}$.

Combining with the estimate \eqref{eqn_curved} of the curved part in \ref{subsub_curved_flat}, we thus obtain the following bootstrap inequality:
\begin{equation}\label{eqn_bootstrap}
    D(\Gd)\leq C_{M,\Ge}\Gd^{-\Ge}+C_\Ge D(\Gd')M^{2\Ge},
\end{equation}
where $\Gd'\sim M^\Ga \Gd$.

\subsubsection{Induction on scales}
Now it's time to iterate \eqref{eqn_bootstrap}. For simplicity of argument we take $\Gd'=M^\Ga \Gd$.

Iterating inequality \eqref{eqn_bootstrap} for $N$ times, we obtain
\begin{align*}
     D(\Gd)&\leq  C_{M,\Ge}\Gd^{-\Ge}[1+C_\Ge M^{(2-\Ga)\Ge}+\cdots+(C_\Ge M^{(2-\Ga)\Ge})^{N-1}]+(C_\Ge M^{2\Ge})^N D(M^{\Ga N} \Gd)\\
     &\leq \left (C_{M,\Ge}\Gd^{-\Ge}+D(M^{\Ga N} \Gd)\right )(C_\Ge M^{2\Ge})^N.
\end{align*}
We will stop this iteration once we reach $M^{\Ga N}\Gd\sim M^{-3}$, that is, when 
$$
N\sim \frac {\log \Gd^{-1}}{\Ga\log M}.
$$
In this case we have $D(M^{\Ga N} \Gd)\sim_{M,\Ge} 1$, and 
$$
(C_\Ge M^{2\Ge})^N\sim (\Gd^{-1})^{2\Ge+\frac {\log C_\Ge}{\Ga \log M}}.
$$
Finally, we are ready to specify the choice of $M=M(d,\Ge)$ mentioned at the beginning of the base case: we let $\frac{\log C_\Ge}{\Ga \log M}=\Ge$, that is,
$$
M=C_\Ge^{\frac 1 {\Ga \Ge}}.
$$
In this way, we have $D(\Gd)\lesssim_{\Ge}\Gd^{-4\Ge}$, independent of the choice of $\phi$. Since $\varepsilon$ is arbitrary, we have finished the proof of the decoupling inequality.

\subsection{Analysis of overlap}\label{sec_overlap_proof_main}
To show $\mathcal P_\Gd$ has $\Ge$-bounded overlap, we may actually prove a slightly stronger statement that $\sum_{T\in \mathcal P_\Gd}1_{100T}\leq C_{d,\Ge}\Gd^{-\Ge}$.

There are two main sources of overlaps between rectangles in $\mathcal P_\Gd$, namely, overlaps between rectangles created within one iteration, and overlaps between different iterations.

The former is easy to deal with. First, Section \ref{subsub_intermediate} gives rise to intermediate rectangles with their 100-dilations boundedly overlapping when we apply Theorem \ref{thm_2D_general_uniform}. Second, although Section \ref{subsub_2D_uniform} gives rise to a partition of $[-1,1]^2$ into strips of the form $I\times [-1,1]$, where the covering of each strip may slightly exceed $I\times [-1,1]$, the overlaps of the 100-dilations of the covering rectangles are also bounded, in view of Theorem \ref{thm_Yang2_rational}. Hence, the 100-dilations of the rectangles created within one iteration have bounded overlap.

For the latter kind of overlap, we use a very rough bound. Namely, since in each iteration the number of overlap is at most $C_d$, the final number of overlap between is at most 
$$
C_d^{N}\lesssim C_d^{\frac {\log \Gd^{-1}}{\Ga\log M}}\sim_\varepsilon \Gd^{-\Ge},
$$
as required.

We remark that a more refined analysis of the geometry of the rectangles will probably give a better overlap bound, which we do not pursue in this article.

\subsection{The convex case}\label{sub_convex}
Lastly, we prove a $\ell^2(L^p)$ decoupling inequality \eqref{eqn_decoupling_l2_eps} when $\det D^2 \phi\ge -\Gd^2$ over $[-1,1]^2$.

We claim that the set $Z_\Gd:=\{(x,y)\in [-1,1]^2:\det D^2\phi(x,y)\in [-\Gd^2,0]\}$ lies in the flat part of each iteration. First, it is easy to see that $Z_\Gd$ is a subset of $S_{\text{flat}}$ in Section \ref{subsub_curved_flat} since $\Gd^2<M^{-1}$. Next, note that after one iteration, the rescaled function $\phi_3$ in Section \ref{subsub_rescaling_2} satisfies $|\det D^2\phi_3|\leq M^{2\Ga}\Gd^2$, which is also much smaller than $M^{-1}$, over the corresponding rescaling of $Z_\delta$. Since the iteration stops after $N\sim \frac{\log\Gd^{-1}}{\Ga \log M}$ steps, we have for all $j$ (where $\phi_3$ gives rise to $\phi_4,\phi_5,\phi_6$ in the second iteration, and so on)
$$
|\det D^2\phi_{3j}|\leq M^{2N\Ga}\Gd^2\sim M^{-6}<M^{-1}.
$$
Thus the set $Z_\Gd$ is a subset of the flat part in every iteration.

We recall the simple fact that the composition of a convex function with any orientation-preserving affine map is also convex. As a result, by the argument above, in the $j$-th iteration, the function $\phi_{3j}$ is convex over $[-1,1]^2\setminus Z_\delta$, which is the curve part that we need to decouple further. Thus, in Section \ref{subsub_curved_decoupling} when we invoke Theorem \ref{thm_Bourgain_Demeter} we can obtain $\ell^2(L^p)$ estimates. Since all other decoupling inequalities, namely, the intermediate decoupling in Section \ref{subsub_intermediate_decoupling} and 2D cylindrical decoupling in Section \ref{subsub_2D_uniform cylindrical}, are $\ell^2(L^p)$ inequalities, in the end we are able to obtain a $\ell^2(L^p)$ decoupling inequality for the polynomial $\phi$.

\section{Polynomials with small Hessian determinant}\label{sec_small_hessian}
In this section, we prove Theorem \ref{thm_small_hessian}.  All implicit constants in this section are assumed to depend on the degree $d$.

\subsection{A reduction}In this subsection, we reduce Theorem \ref{thm_small_hessian} to studying a special type of polynomials. We need the following lemma about eigenvalues of Hessian matrix of polynomials.

\begin{lem}\label{lem_large_eigenvalue}
Let $\phi:\R^2\to \R$ be a polynomial of degree at most $d$, without linear terms. Suppose that $\phi$ has $O(1)$ coefficients and at least one of them is $\sim 1$. Then there exists $(x_0,y_0)\in B(0,1)$ such that one of the eigenvalues of $D^2 \phi(x_0,y_0)$ is of magnitude $\sim 1$.

\end{lem}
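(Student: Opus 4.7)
\textbf{Proof proposal for Lemma \ref{lem_large_eigenvalue}.}

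The plan is to pass from the assumption on the coefficients of $P$ to a lower bound on a single entry of the Hessian at some point, and then invoke the standard fact that the operator norm of a symmetric matrix dominates the magnitude of any of its entries, which in turn equals the maximum absolute value of the eigenvalues. The only non-trivial ingredient will be the appendix fact (Proposition \ref{prop_polycoeff}) that relates the coefficient size of a bounded-degree polynomial to its sup-norm on the unit ball.

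First I would expand $P(x,y)=\sum_{i+j\geq 2}a_{ij}x^iy^j$ and pick out a pair $(i_0,j_0)$ with $i_0+j_0\geq 2$ and $|a_{i_0 j_0}|\sim 1$. Computing
\begin{equation*}
P_{xx}=\sum_{i\geq 2,\,j}i(i-1)a_{ij}x^{i-2}y^j,\quad P_{xy}=\sum_{i,j\geq 1}ij\,a_{ij}x^{i-1}y^{j-1},\quad P_{yy}=\sum_{i,\,j\geq 2}j(j-1)a_{ij}x^iy^{j-2},
\end{equation*}
I would then argue by cases: if $i_0\geq 2$ then $P_{xx}$ has a coefficient of size $\sim 1$; if $j_0\geq 2$ then $P_{yy}$ does; and in the remaining case $i_0=j_0=1$ (which is forced by $i_0+j_0\geq 2$ and $i_0,j_0\leq 1$), $P_{xy}$ has the coefficient $a_{11}\sim 1$. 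Hence at least one of $P_{xx},P_{xy},P_{yy}$ is a polynomial of degree at most $d-2$, with $O(1)$ coefficients, one of which has magnitude $\sim 1$.

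Next, I would invoke Proposition \ref{prop_polycoeff} to conclude that this Hessian entry (as a polynomial on $\R^2$) attains magnitude $\sim 1$ at some point $(x_0,y_0)\in B(0,1)$. Finally, for the symmetric matrix $D^2 P(x_0,y_0)$, the spectral norm satisfies $\|D^2 P(x_0,y_0)\|_{2}\geq \max\{|P_{xx}|,|P_{xy}|,|P_{yy}|\}(x_0,y_0)\sim 1$, since for any symmetric matrix $A$ and unit vectors $e_i$ we have $|A_{ij}|=|e_i^{T}Ae_j|\leq \|A\|_{2}$. Because $\|D^2 P(x_0,y_0)\|_2$ equals the maximum absolute eigenvalue, this is exactly the statement we want; the matching upper bound $O(1)$ is automatic from the fact that all Hessian entries are $O(1)$ on $B(0,1)$ by Proposition \ref{prop_polycoeff} applied in the other direction.

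There is no real obstacle here beyond the bookkeeping of cases and the invocation of Proposition \ref{prop_polycoeff}; the ``no linear terms'' hypothesis is used only to ensure $i_0+j_0\geq 2$, which is what makes the differentiation argument yield a Hessian entry (rather than a first-order derivative) with a large coefficient.
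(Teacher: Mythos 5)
Your proposal is correct, and it takes a genuinely different (and arguably cleaner) route than the paper's proof. The paper first applies a rotation so that, without loss of generality, $P_{xx}$ has a coefficient $\sim 1$; it then locates $(x_0,y_0)$ with $P_{xx}(x_0,y_0)\sim 1$ and splits into cases according to whether the trace $P_{xx}+P_{yy}$ is large (then the eigenvalue sum is large) or small (then $P_{yy}$ is comparably negative, forcing $|\det D^2P|\gtrsim 1$, hence the eigenvalue product is large). You instead observe, by a direct case analysis on the index $(i_0,j_0)$, that \emph{some} Hessian entry already has a coefficient $\sim 1$ — no rotation needed — and then bypass the trace/determinant dichotomy entirely via the elementary bound $|A_{ij}|=|e_i^{T}Ae_j|\leq\|A\|_2$ combined with the fact that for symmetric $A$ the spectral norm is the largest absolute eigenvalue. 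This substitutes a one-line linear-algebra fact for the paper's two-case eigenvalue argument and removes the need for a preliminary rotation. One tiny point worth making explicit, common to both approaches: Proposition \ref{prop_polycoeff} gives a point in the \emph{closed} ball where the entry is $\gtrsim 1$; since the entry is continuous and $\gtrsim 1$ there, you may perturb into the open ball $B(0,1)$, so the statement as written is fine.
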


\begin{proof}
By a rotation and replacing $\phi$ by $-\phi$ if necessary, we may assume that $P_{xx}$ has at least one coefficient $ \sim 1$. By Proposition $\ref{prop_polycoeff}$ again, there exists $(x_0,y_0) \in B(0,1)$ such that $P_{xx}(x_0,y_0) \sim 1$.

Note that $\phi_{xx}+\phi_{yy}$ is the trace, and hence the sum of eigenvalues, of $D^2\phi$. Thus, if $|\phi_{xx}(x_0,y_0)+\phi_{yy}(x_0,y_0)| > \phi_{xx}(x_0,y_0) /2 \sim 1$, we are done. Otherwise, we have $\phi_{yy}(x_0,y_0) < -\phi_{xx}(x_0,y_0)/2<0$. However, $$\det D^2 \phi(x_0,y_0) < -\phi_{xx}^2(x_0,y_0)/2 - \phi_{xy}^2(x_0,y_0).$$ Therefore $$|\det D^2 \phi(x_0,y_0)| \geq \phi_{xx}^2(x_0,y_0)/2 \sim 1.$$ 
This implies that the product of the eigenvalues of $D^2\phi(x_0,y_0)$ is bounded below. Since both eigenvalues are $O(1)$, they have magnitude $\sim 1$, as desired.
\end{proof}

Lemma \ref{lem_large_eigenvalue} helps us to reduce Theorem \ref{thm_small_hessian} to the following case.

\begin{prop}\label{prop_small_hessian_assume_A}
There is a constant $\alpha = \alpha(d) \in (0,1]$ such that any polynomial $Q(x,y)$ satisfying 
\begin{enumerate}
    \item $\deg Q \leq d$;
    \item $Q$ has no linear term;
    \item the only second order term of $Q$ is $x^2$;
    \item all coefficients of $Q$ are $O(1)$;
    \item all coefficients of $\det D^2 Q $ are bounded by some $\nu \in (0,1)$;
\end{enumerate}
is of the form
\begin{equation}\label{eq:prop_small_hessian_assume_A}
    Q(x,y) = \Tilde{A}(x) + \nu^{\alpha}\Tilde{B}(x,y),
\end{equation}
where $\Tilde{A},\Tilde{B}$ are polynomials with $O(1)$ coefficients.
\end{prop}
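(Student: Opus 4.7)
The plan is to expand $Q(x,y)$ in powers of $y$ and extract recursive inequalities from the coefficient-wise bound on $\det D^2 Q$. Write
\begin{equation*}
Q(x,y) = \sum_{j=0}^{d} y^j P_j(x),
\end{equation*}
where each $P_j$ is a polynomial of degree at most $d-j$ with $O(1)$ coefficients. The hypotheses of the proposition translate into: $P_0(x)=x^2+O(x^3)$ (so in particular $P_0''(0)=2$), $P_1(x)=O(x^2)$ (since $Q$ has no $y$ or $xy$ term), $P_2(x)=O(x)$ (since $Q$ has no $y^2$ term), while no low-order vanishing is imposed on $P_j$ for $j\ge 3$. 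The goal is to prove that for every $j\ge 1$, the coefficients of $P_j$ are bounded by $O(\nu^{\alpha})$ for some $\alpha=\alpha(d)>0$; then setting $\tilde A(x):=P_0(x)$ and $\tilde B(x,y):=\nu^{-\alpha}\sum_{j\ge 1}y^j P_j(x)$ yields \eqref{eq:prop_small_hessian_assume_A}.

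The main tool will be the identity obtained by expanding $\det D^2 Q=Q_{xx}Q_{yy}-Q_{xy}^2$ in powers of $y$. A direct computation gives
\begin{equation*}
[\det D^2 Q]_n(x) \;=\; (n+2)(n+1)\,P_0''(x)\,P_{n+2}(x) \;+\; \Phi_n\bigl(x;\,P_1,\ldots,P_{n+1}\bigr),
\end{equation*}
where $\Phi_n$ is a finite $\R$-linear combination of products of pairs drawn from $\{P_j,P_j',P_j'':1\le j\le n+1\}$ (with the convention $P_j\equiv 0$ for $j>d$). By hypothesis, every coefficient of $[\det D^2 Q]_n$ is $O(\nu)$; equivalently, by Proposition~\ref{prop_polycoeff}, $|[\det D^2 Q]_n(x)|\lesssim \nu$ on $[-1,1]$. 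Crucially, since $P_0''(x)=2+O(x)$, there is a neighbourhood of the origin on which $P_0''(x)\sim 1$, so this equation can be solved for $P_{n+2}(x)$ modulo the contribution of $\Phi_n$ and an $O(\nu)$ additive error.

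I would then run an induction on $n$. The inductive hypothesis is that $P_j$ is coefficient-wise $O(\nu^{\alpha_j})$ for $1\le j\le n+1$. Since $\Phi_n$ is quadratic in $P_1,\ldots,P_{n+1}$ and their derivatives, it is of size $O(\nu^{2\min_{j\le n+1}\alpha_j})$. Dividing by $P_0''$ on a small interval about $0$ where $P_0''\sim 1$, and re-extending coefficient-wise via Proposition~\ref{prop_polycoeff}, yields $P_{n+2}=O(\nu^{\alpha_{n+2}})$ for a concrete $\alpha_{n+2}$ expressible in $\alpha_1,\ldots,\alpha_{n+1}$. The initial case is the subtle one: the $n=0$ equation reads $2P_0''P_2-(P_1')^2=O(\nu)$ and does not by itself separate $P_1$ from $P_2$. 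To break the degeneracy, one combines it with the next few $y^n$-equations, which in the worst case force $P_1=O(\nu^{1/2})$ through a square-root loss; this is already visible in the $d=3$ warm-up, where the $x^2$-coefficient equation $12c_{30}c_{12}-4c_{21}^2=O(\nu)$ combined with $c_{12}=O(\nu)$ yields $c_{21}=O(\nu^{1/2})$.

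The main obstacle is largely bookkeeping rather than conceptual, and is twofold. First, one must handle the "division" by $P_0''(x)$ at the coefficient level cleanly: since $P_0''$ is a non-constant polynomial, the inversion must be done pointwise on a neighbourhood of the origin where $P_0''\sim 1$, and then the bound on $P_{n+2}$ transferred back to coefficients via Proposition~\ref{prop_polycoeff}, which is the reason for restricting to $[-1,1]$ throughout. Second, one must track the propagation of the worst-case square-root losses through the $O(d)$ iterations to certify a concrete positive exponent $\alpha(d)$. A naive analysis, where each step at worst squares a previous bound, produces an exponent of the order $2^{-O(d^2)}$, consistent with the value $2^{(2-d)(d+1)^2}$ quoted in Remark~1 (and, as the remark notes, certainly not sharp).
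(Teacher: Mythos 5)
Your decomposition $Q=\sum_j y^j P_j(x)$ and the identity $[\det D^2 Q]_n = (n+2)(n+1)P_0''P_{n+2} + \Phi_n(P_1,\dots,P_{n+1})$ are correct, and the division-by-$P_0''$-near-the-origin step, followed by Proposition~\ref{prop_polycoeff} to recover coefficient bounds, is legitimate. This is a genuinely different route from the paper, which organises the coefficients by the faces of the Newton polygon (the ``representing lines'' of Section~\ref{sec_small_hessian}): the key identity there, Lemma~\ref{lem_rep}, says $\det D^2(Q|_\ell)=(\det D^2 Q)|_{\ell'}$ for any supporting line $\ell$ of $N(Q)$, so the Hessian-coefficient bounds along the extremal edges can be analysed in closed form (Proposition~\ref{prop_line}) and then peeled off iteratively. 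Your grading is instead by $y$-degree only.

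The gap is precisely where you flag it, but I don't think it is merely bookkeeping. Your proposed induction needs $P_1$ and $P_2$ small as a base case, yet $P_1$ never appears \emph{linearly} in any of the equations $[\det D^2 Q]_n=O(\nu)$: every occurrence of $P_1$ is either through a derivative or in a product with some other $P_j$. So there is no single equation of the form ``$P_1 = $ (something $O(\nu^\alpha)$)'', and ``combining with the next few $y^n$-equations'' is not a triangular solve: each higher $n$ introduces a new unknown $P_{n+2}$. Worse, the specific claim ``in the worst case $P_1=O(\nu^{1/2})$'' is already false for $d=4$. With $P_1=cx^2+ex^3$, $P_2=fx+gx^2$, the $x^2$-coefficient of $[\det D^2 Q]_0$ gives $4g-4c^2=O(\nu)$, and after eliminating other small quantities the $x^2$-coefficient of $[\det D^2 Q]_1$ gives $cg=O(\nu^{1/2})$; combining these yields only $c=O(\nu^{1/6})$, not $O(\nu^{1/2})$. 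In general, the bound on $P_1$ must be extracted by chaining together several $x^m$-coefficient equations across several $y^n$-levels, with the exponent degrading at each step in a way you have not controlled. You would need to exhibit the actual ordering of coefficients in which the system becomes solvable — and that ordering is not ``by $y$-degree'' but rather ``by edges of the Newton polygon,'' which is exactly what the paper's representing-line argument makes precise. As written, the base case is asserted rather than proved, and the inductive step cannot start.
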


We will prove Proposition \ref{prop_small_hessian_assume_A} in the next subsection.

\begin{proof}[Proof of Theorem \ref{thm_small_hessian} assuming Proposition \ref{prop_small_hessian_assume_A}] Let $\phi$ be a polynomial satisfying the assumption of Theorem \ref{thm_small_hessian}. By dividing $\phi$ by its largest coefficient, we may assume without loss of generality that $\phi$ has some coefficient $1$. 

Now, we apply Lemma \ref{lem_large_eigenvalue} and get $(x_0,y_0) \in B(0,1)$ such that one of the eigenvalues $\lambda_1$ of $D^2\phi(x_0,y_0)$ is of magnitude $\sim 1$. By dividing $\phi$ by $\lambda_1$ and replacing $\phi$ by $-\phi$ if necessary, we assume without loss of generality that $\lambda_1=1$.

Since $\sup_{B(0,1)}|\det D^2 \phi | \lesssim \nu$ by Proposition \ref{prop_polycoeff}, the other eigenvalue $\lambda_2$ of $D^2\phi(x_0,y_0)$ is of magnitude $O(\nu)$. Let $\rho$ be a rotation that sends $ \{e_1,e_2\}$ to the unit eigenvectors of $D^2\phi(x_0,y_0)$ and  $\tau$ be a translation that sends  the origin to $\rho^{-1}(x_0,y_0)$. By the eigenvalue analysis, the second order terms of $\Tilde{\phi} = \phi \circ \rho \circ \tau$  are given by $x^2 + O(\nu)y^2$. Define $Q$ by removing all the terms of degree at most two except $x^2$ in $\Tilde{\phi}$. Then we see that all assumptions in Proposition \ref{prop_small_hessian_assume_A} are satisfied. Hence, $Q$ is of the form \eqref{eq:prop_small_hessian_assume_A}.

Now, we analysis the coefficients of $\phi\circ \rho = \Tilde{\phi} \circ \tau^{-1}$. First, $Q \circ \tau^{-1}$ is also of the form \eqref{eq:prop_small_hessian_assume_A}. Moreover, the linear terms in $\Tilde{\phi}$ have no impact on the higher order terms in $\Tilde{\phi}\circ \tau^{-1}$ and $O(\nu)y^2$ can be absorbed to $\Tilde{B}$. On the other hand, $\phi$ has no linear term, and so does $\phi \circ \rho$. In conclusion, we see that 
$$
\phi \circ \rho (x,y) = A(x) + \nu^{\alpha}B(x,y)
$$
for some polynomials $ A, B$ with $O(1)$ coefficients, as desired.
\end{proof}

\subsection{The representing line} In this subsection, we prove Proposition \ref{prop_small_hessian_assume_A}. We first introduce the following terminology.
\begin{defn}
Let $\phi:\R^2\to \R$ be a polynomial. We consider the set of nonzero indices of $\phi$, namely,
$$
N(\phi):=\{(m,n)\in \Z_{\geq 0}^2:\text{the coefficient of $x^m y^n$ in $\phi$ is nonzero}\}.
$$
Given a straight line $\ell\sub \R^2$ we denote by $\phi|_\ell$ the sum of terms of $\phi$ with indices lying on $\ell$.
\end{defn}

We have the following simple lemma.
\begin{lem}\label{lem_rep} Let $\ell$ be such that all points of $N(\phi)$ lie on one side of $\ell$, inclusive. Then all points of $N(\det D^2 \phi)$ lie on the same side of $\ell':=\ell + \ell -\{(2,2)\}$. Moreover, 
$$
(\det D^2 \phi)|_{\ell'} = \det D^2 (\phi|_{\ell}).
$$
\end{lem}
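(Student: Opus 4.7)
The plan is to track how monomial indices propagate through the operation $P \mapsto \det D^2 P$ by viewing them as Minkowski sums. Parametrize $\ell = \{(u,v) \in \R^2 : au + bv = c\}$ with $(a,b) \neq (0,0)$, and without loss of generality assume $N(P) \sub H := \{(u,v) : au + bv \leq c\}$. The first observation is that differentiating $x^m y^n$ once in $x$ shifts its index by $(-1,0)$ and once in $y$ by $(0,-1)$; combining this with the fact that multiplication of monomials adds indices, each monomial arising in any of $P_{xx}P_{yy}$ or $P_{xy}^2$ has index of the form $(m_1+m_2-2,\, n_1+n_2-2)$ where $(m_i,n_i) \in N(P)$. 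This gives the containment
\begin{equation*}
    N(\det D^2 P) \sub \{(m_1,n_1) + (m_2,n_2) - (2,2) : (m_i,n_i) \in N(P)\}.
\end{equation*}

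Next I would verify the first claim. Given any such pair, summing the inequalities $am_i + bn_i \leq c$ and subtracting $2(a+b)$ produces $a(m_1+m_2-2) + b(n_1+n_2-2) \leq 2c - 2(a+b)$. Since $\ell' = \ell + \ell - (2,2)$ is exactly the level line $\{au + bv = 2c - 2(a+b)\}$, this places every index of $\det D^2 P$ in the closed half-plane on the correct side of $\ell'$.

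For the equality part, I would decompose $P = P|_\ell + R$, where $N(R)$ lies in the \emph{open} half-plane $\{au + bv < c\}$, and expand
\begin{equation*}
    \det D^2 P = \det D^2(P|_\ell) + (P|_\ell)_{xx} R_{yy} + R_{xx}(P|_\ell)_{yy} - 2(P|_\ell)_{xy} R_{xy} + \det D^2 R.
\end{equation*}
Every monomial appearing in the four remainder terms comes from a pair $((m_1,n_1),(m_2,n_2))$ in which at least one index satisfies the strict inequality $am_i + bn_i < c$, so its resulting index satisfies $a(m_1+m_2-2) + b(n_1+n_2-2) < 2c - 2(a+b)$ and does not lie on $\ell'$. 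On the other hand, every pair contributing to $\det D^2(P|_\ell)$ has both indices on $\ell$, so by the equality case of the same calculation its resulting index lies on $\ell'$. Restricting to $\ell'$ therefore annihilates the four remainder terms and preserves $\det D^2(P|_\ell)$, yielding $(\det D^2 P)|_{\ell'} = \det D^2(P|_\ell)$. No substantive obstacle arises; the only care needed is the strict-versus-non-strict dichotomy in the half-plane containment, which is precisely what isolates $P|_\ell$ from $R$.
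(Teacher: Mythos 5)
Your proposal is correct and follows essentially the same route as the paper: decompose $P = P|_\ell + R$, expand $\det D^2 P$, and observe that every cross term and $\det D^2 R$ lies strictly on one side of $\ell'$ while $\det D^2(P|_\ell)$ lands on $\ell'$. You fill in the half-plane bookkeeping more explicitly and correctly carry the factor of $2$ in $-2(P|_\ell)_{xy}R_{xy}$ (the paper writes $-(P|_\ell)_{xy}R_{xy}$, a harmless typo); otherwise the arguments coincide.
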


\begin{proof}
Write $\phi = \phi|_{\ell} + R$. Then
$$
\det D^2 \phi = \det D^2 (\phi|_\ell)+(\phi|_{\ell})_{xx}R_{yy} + (\phi|_{\ell})_{yy}R_{xx}-(\phi|_{\ell})_{xy}R_{xy} + \det D^2 R. 
$$
All the terms except $\det D^2 (\phi|_\ell)$ are strictly on one side of $\ell+\ell -\{(2,2)\}$.
\end{proof}

In view of Lemma \ref{lem_rep}, the boundaries of the convex hull of $N(\phi)$ are important in the analysis. Since we are interested in polynomials whose only second order term is $x^2$, the boundaries that contain the term $x^2$ are of particular interest. 

\begin{defn}
Let $Q$ be a polynomial that satisfies the five assumptions in Proposition \ref{prop_small_hessian_assume_A}. $\ell \subset \R^2$ is called a representing line of $Q$ if
\begin{enumerate}
    \item $\ell$ contains $(2,0)$ and at least one other point in $N(Q)$;
    \item $\ell$ is not horizontal;
    \item all points of $N(Q)$ lie on one side of $\ell$, inclusive. 
\end{enumerate}
\end{defn}

\begin{prop}\label{prop_line}
Let $Q$ be a polynomial that satisfies the five assumptions in Proposition \ref{prop_small_hessian_assume_A} and $\ell$ be a representing line of $Q$. Then 
$$
Q|_\ell = x^2 + \nu^{\beta} B(x,y),
$$
for some $\beta \in (0,1]$ depending only on $d$ and some polynomial $B$ with $O(1)$ coefficients.
\end{prop}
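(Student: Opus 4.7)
The plan is to exploit Lemma \ref{lem_rep}: since $\ell$ passes through $(2,0)$ and $N(Q)$ lies on one side of $\ell$, we have $(\det D^2 Q)|_{\ell'} = \det D^2(Q|_\ell)$, where $\ell' = \ell + \ell - (2,2)$ is the line parallel to $\ell$ through $(2,-2)$. Since all coefficients of $\det D^2 Q$ are bounded by $\nu$, the entire polynomial $\det D^2(Q|_\ell)$, which is supported on $\ell'$, has all coefficients $O(\nu)$. Writing $Q|_\ell = x^2 + R$ where $R$ collects the remaining terms (with indices on $\ell$ of total degree $\geq 3$), the task reduces to showing every coefficient of $R$ is $O(\nu^\beta)$ for some $\beta = \beta(d) > 0$.

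I will split by the direction of $\ell$. In the easy cases---either $\ell$ has negative slope, or positive slope $q/p$ with $q \geq 2$---the lowest-index coefficient of $\det D^2(Q|_\ell)$ on $\ell'$ is already a linear expression in the leading coefficient $c_1$ of $R$ with nonvanishing prefactor. For negative slope, the degree-$\geq 3$ constraint forces $\ell \cap N(Q) \setminus \{(2,0)\}$ to be at most two points, and a direct computation gives $c_1 = O(\nu)$ (and similarly for $c_2$ when present). For positive slope with $q \geq 2$, the coefficient of $\det D^2(Q|_\ell)$ at $(2+p, q-2) \in \ell'$ equals $2q(q-1) c_1$. Iterating along $\ell'$, each subsequent index yields a linear constraint on a new $c_k$ with prefactor $\sim kq(kq-1) \neq 0$, plus quadratic corrections in already-controlled $c_j$'s; hence $c_k = O(\nu)$ for all $k$, and $\beta = 1$ suffices.

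The essential case is when $\ell$ has positive slope $1/p$ (so $q = 1$), or $\ell$ is vertical (formally $p = 0$). Writing $Q|_\ell = x^2 G(x^p y)$ with $G(t) = 1 + \sum_{k=1}^{K} c_k t^k$, a direct chain-rule computation yields the clean factorization
\[
\det D^2(Q|_\ell) = x^{2p+2}\, H(x^p y), \qquad H(u) = 2 G G'' - (p+2)^2 (G')^2 - p(p+1)\, u\, G' G'',
\]
reducing the problem to: every coefficient of the univariate polynomial $H(u)$ (of degree $\leq 2K - 2$) being $O(\nu)$ forces $|c_k| = O(\nu^\beta)$. Two ingredients close this. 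First, the top coefficient of $H$ equals the nonzero multiple $-K[K^2 p(p+1) + K(3p+2) + 2]\, c_K^2$ of $c_K^2$, so $c_K = O(\nu^{1/2})$. Second, for each $k = 0, 1, \ldots, K-2$ the coefficient of $u^k$ in $H$ is linear in $c_{k+2}$ with nonvanishing prefactor $2(k+2)(k+1)$, plus a quadratic in $c_1, \ldots, c_{k+1}$; iterating this recursion from $k = 0$ upward gives $c_k = \alpha_k(p)\, c_1^k + O(\nu)$ for explicit constants $\alpha_k(p)$ (e.g.\ $\alpha_2(p) = (p+2)^2/4$, $\alpha_3(p) = (p+1)(p+2)^3/8$).

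Combining, $\alpha_K(p) c_1^K = c_K + O(\nu) = O(\nu^{1/2})$, so $c_1 = O(\nu^{1/(2K)})$ and hence $c_k = O(\nu^{k/(2K)}) \subseteq O(\nu^{1/(2K)})$ for all $k$. Since $K \leq d - 2$, the exponent $\beta = 1/(2(d-2))$ depends only on $d$, and $B := \nu^{-\beta} R$ has $O(1)$ coefficients, giving the desired decomposition. The main obstacle is the verification that $\alpha_K(p) \neq 0$ for all admissible $K, p$, which is the technical heart of the argument; this can be done by a somewhat lengthy but direct analysis of the recursion, or circumvented non-constructively by applying a Łojasiewicz inequality to the polynomial system ``all coefficients of $H$ vanish'' (whose only common solution within the relevant class is $c_1 = \cdots = c_K = 0$, as one sees from the top-coefficient argument above), thereby obtaining some $\beta = \beta(d) > 0$ without explicit constants.
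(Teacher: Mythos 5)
Your starting point (Lemma \ref{lem_rep}) and your treatment of the negative-slope case line up with the paper. Your computation of the factorization $\det D^2(x^2G(x^py))=x^{2p+2}H(x^py)$ with $H=2GG''-(p+2)^2(G')^2-p(p+1)uG'G''$ is also correct, as is the top coefficient $-K[K^2p(p+1)+K(3p+2)+2]\,c_K^2$. The difficulty is exactly where you flag it: your closing step needs $\alpha_K(p)\neq 0$ to convert $c_K=O(\nu^{1/2})$ into a bound on $c_1$, and you do not verify this. The Łojasiewicz fallback you sketch would plug the hole in principle (the zero set of the coefficient map is indeed only the origin, by iterating the top-coefficient argument), but you have not carried it out, and in any case both routes are far more intricate than what the statement actually requires.

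The paper avoids the problem entirely by peeling from the top instead of running a recursion from the bottom. If $cx^{k_1}y^{k_2}$ is the highest-order term of $Q|_\ell$ and $k_2\ge 1$, then the top term of $\det D^2(Q|_\ell)$ is $-c^2k_1k_2(k_1+k_2-1)x^{2k_1-2}y^{2k_2-2}$. Since $k_1\ge 1$, $k_2\ge 1$, and $k_1+k_2\ge 3$, the prefactor $k_1k_2(k_1+k_2-1)$ is a manifestly nonzero integer; hence $c=O(\nu^{1/2})$ with no hidden constants to check. Subtracting $cx^{k_1}y^{k_2}$ reduces the degree by at least one and leaves a polynomial whose Hessian determinant has coefficients $O(\nu^{1/2})$, so one simply iterates at most $d$ times and lands on $\beta=2^{2-d}$. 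In short: the nonvanishing the paper needs is elementary and uniform, whereas your $\alpha_K(p)$ encodes a nonlinear recursion whose nonvanishing is genuinely nontrivial. Your observation that the $q\ge 2$ sub-case can be handled from the bottom and yields the sharper $\beta=1$ is valid, but it is not needed (the top-peeling works uniformly), and it does not extend to $q=1$ or the vertical case without the unfilled gap above.
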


\begin{proof} Express $\ell$ in the $(m,n)$-plane by the equation $m=t n+2$ where $t\in \R$. By Lemma \ref{lem_rep}, we see that $\det D^2(Q|_\ell)$ has coefficients bounded by $\nu$.

\subsubsection{Case $t<0$}
Since we are in $\Z_{\ge 0}^2$ we see $Q|_\ell$ is either of the form $Q|_\ell(x,y)=x^2+a y^k$ where $k \geq 3$ or
$$
Q|_\ell(x,y)=x^2+a_1 xy^k+a_2 y^{2k}
$$
where $k \geq 2$. 

In the former case, $\det D^2(Q|_\ell)= 2 a k (k-1)y^{k-2}$. Then $a=O(\nu)$ and we get the form we want. 

In the latter case, a direct computation shows that
$$
\det D^2(Q|_\ell)=2a_1k(k-1) xy^{k-2}+[4a_2 k(2k-1)-a_1^2k^2]y^{2k-2}.
$$

Since $\det D^2 (Q|_\ell)$ has coefficients bounded by $\nu$, we have $a_1=O(\nu)$, $a_2=O(\nu)$ as desired.

\subsubsection{Case $t\geq 0$}
In this case, $\ell$ is either vertical or has a positive slope. Let $cx^{k_1}y^{k_2}$ be the highest order term of $Q|_\ell$.
If $k_2\geq 1$, then by direct computation,
$$
\det D^2 (Q|_\ell)=-c^2 k_1k_2 (k_1+k_2-1)x^{2k_1-2}y^{2k_2-2}+\text{lower order terms}.
$$
In particular, we see that $c=O(\nu^{1/2})$. Thus we can approximate $Q|_\ell$ by $\Tilde{Q}=Q|_\ell-cx^{k_1}y^{k_2}$ and see that all coefficients of $\det D^2 \Tilde{Q}$ are of the order $O(\nu^{1/2})$. Note that $\tilde{Q}$ satisfies the same five assumptions with $\nu$ replaced by $\nu^{1/2}$. Each such approximation reduces the degree by at least $1$. This process can be repeated at most $d$ times until we arrive at $x^2$. In conclusion, we see that $Q|_\ell$ is of the form $x^2+\nu^{\beta}B(x,y)$ where $B$ has bounded coefficients, and $\beta = 2^{2-d}$.

\end{proof}

We are now ready to prove Proposition \ref{prop_small_hessian_assume_A}, thus completing the proof of Theorem \ref{thm_small_hessian}.

\begin{proof}[Proof of Proposition \ref{prop_small_hessian_assume_A}]
Let $Q$ be a polynomial that satisfies the five assumptions. If there is no representing line of $Q$, we are done because in this case $Q$ is a function of $x$. Otherwise, by Proposition \ref{prop_line}, all coefficients on representing lines $\ell$ of $Q$ are $O(\nu^\beta)$. We can then approximate $Q$ by $Q_1 = Q- Q|_{\ell}+x^2$ and $\det D^2 Q_1$ has $O(\nu^\beta)$ coefficients. For $j\geq 1$, repeat the process with $\nu$ replaced $\nu^{\beta^{j}}$ and let $Q_{j+1} = Q_j - (Q_j)|_{\ell_j}$, for some representing lines $\ell_j$ of $Q_j$. The process will be terminated in less than $(d+1)^2$ steps when no representing lines are available. In summary, we see that all coefficients of $Q$, except those terms containing $x$ only, is $O(\nu^{\beta^{(d+1)^2}})$. Thus, we obtain \eqref{eq:prop_small_hessian_assume_A} with $\alpha = \beta^{(d+1)^2}$.
\end{proof}

\section{Appendix}\label{sec_appendix}

The following facts about polynomials will be used extensively, and are the key to many of our analysis of polynomials.
\begin{prop}\label{prop_polycoeff}
For any $n$-variate real polynomial $P=\sum_\Ga c_\Ga \xi^\Ga$ of degree at most $d$, we have the following relation:
$$
\sup_{ [-1,1]^n}|P|\sim_{n,d}\max_\Ga|c_\Ga|.
$$
As a result, we also have the following equivalences:
$$
\sup_{[-1,1]^n}|P|\sim_{n,d} \sum_\Ga|c_\Ga|\sim_{n,d,C}\sup_{ [-C,C]^n}|P|\sim_{n,d} \sup_{ B^n(0,C)}|P|,
$$
for any absolute constant $C$.
\end{prop}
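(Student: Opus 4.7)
\textbf{Proof proposal for Proposition \ref{prop_polycoeff}.}

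The plan is to view both sides as norms on the finite-dimensional vector space $\mathcal{P}_{n,d}$ of real polynomials in $n$ variables of degree at most $d$, and then invoke the equivalence of norms on finite-dimensional spaces; the remaining equivalences follow by rescaling.

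First I would handle the easy direction: on $[-1,1]^n$ every monomial $\xi^\Ga$ satisfies $|\xi^\Ga|\le 1$, so by the triangle inequality
\begin{equation*}
    \sup_{[-1,1]^n}|P|\le \sum_\Ga |c_\Ga|\le N_{n,d}\max_\Ga|c_\Ga|,
\end{equation*}
where $N_{n,d}=\binom{n+d}{d}$ is the number of monomials of degree $\leq d$.

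For the reverse direction, define $\|P\|_\infty:=\sup_{[-1,1]^n}|P|$ and $\|P\|_{\mathrm{coef}}:=\max_\Ga|c_\Ga|$. The map $\|\cdot\|_{\mathrm{coef}}$ is manifestly a norm on $\mathcal{P}_{n,d}$, and $\|\cdot\|_\infty$ is a seminorm; to see it is a norm I would note that if $\|P\|_\infty=0$ then $P$ vanishes on the open set $(-1,1)^n$, and by induction on $n$ (repeatedly applying the fact that a one-variable polynomial with infinitely many zeros vanishes identically) $P\equiv 0$. Since $\mathcal{P}_{n,d}$ is finite-dimensional, all norms are equivalent, yielding constants $c_{n,d},C_{n,d}>0$ with $c_{n,d}\|P\|_{\mathrm{coef}}\le \|P\|_\infty\le C_{n,d}\|P\|_{\mathrm{coef}}$. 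This gives the main equivalence $\sup_{[-1,1]^n}|P|\sim_{n,d}\max_\Ga|c_\Ga|$.

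For the corollary equivalences, $\max_\Ga|c_\Ga|\sim_{n,d}\sum_\Ga|c_\Ga|$ is immediate since the number of terms is $O_{n,d}(1)$. For the cube of radius $C$, I would consider $Q(\xi):=P(C\xi)$, whose coefficients are $C^{|\Ga|}c_\Ga$, and apply the main equivalence to both $P$ and $Q$:
\begin{equation*}
    \sup_{[-C,C]^n}|P|=\sup_{[-1,1]^n}|Q|\sim_{n,d}\max_\Ga C^{|\Ga|}|c_\Ga|\sim_{n,d,C}\max_\Ga|c_\Ga|\sim_{n,d}\sup_{[-1,1]^n}|P|.
\end{equation*}
Finally, for the ball $B^n(0,C)$, I would use the inclusions $B^n(0,C)\subset[-C,C]^n\subset B^n(0,C\sqrt{n})$ together with what has just been proved for cubes. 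There is no real obstacle here beyond verifying the positivity of $\|\cdot\|_\infty$; once that is in hand, the statement is a direct application of the equivalence of norms on a finite-dimensional space.
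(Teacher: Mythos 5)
The paper does not supply its own proof of this proposition; it simply refers the reader to \cite{Kellogg1928}. Your argument is a correct, self-contained proof. The one point that genuinely needs checking is that $\|\cdot\|_\infty$ is a norm rather than merely a seminorm on $\mathcal{P}_{n,d}$, and you handle that correctly: if $P$ vanishes on the open cube $(-1,1)^n$, then by the standard induction on $n$ (fixing the first $n-1$ variables and using that a univariate polynomial with infinitely many zeros is identically zero, then descending on the coefficient polynomials) one gets $P\equiv 0$. The rescaling step for $[-C,C]^n$ and the sandwich $B^n(0,C)\subset[-C,C]^n\subset B^n(0,C\sqrt{n})$ are both routine and correct. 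The difference from the cited source is one of flavor rather than substance: Kellogg's result (in the tradition of Markov- and Chebyshev-type inequalities) is constructive and yields explicit constants, whereas your finite-dimensional compactness argument gives only the existence of constants $c_{n,d}, C_{n,d}$. For the purposes of this paper, where the dependence of all constants is tracked only qualitatively in $n$ and $d$, the non-constructive route is entirely adequate and arguably cleaner.
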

For a proof of this proposition, the reader may consult \cite{Kellogg1928}.

\begin{cor}\label{cor_polycoeff}
For any rectangle $T \subset \R^n$ and any absolute constant $C$,
$$
\sup_{T} |P| \sim _{n,d,C} \sup_{CT} |P|.
$$
\end{cor}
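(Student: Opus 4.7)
The plan is to reduce the corollary to Proposition \ref{prop_polycoeff} by an affine change of variables. Since a rectangle $T\subset\R^n$ can be written as $T=\{c+\sum_{i=1}^n t_i v_i:|t_i|\le 1\}$ for some center $c$ and mutually orthogonal ``side vectors'' $v_1,\ldots,v_n$, the affine map
\[
L(x_1,\ldots,x_n)=c+\sum_{i=1}^n x_i v_i
\]
is a bijection from $[-1,1]^n$ onto $T$ that sends $0$ to the center of $T$. The key geometric observation is that the dilation $CT$ of $T$ about its center equals $L([-C,C]^n)$; this follows immediately from the definition of $L$ since scaling each coordinate $x_i$ by $C$ scales each displacement $t_iv_i$ by $C$ while fixing $c$.

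Next, I would set $Q := P\circ L$. Since $L$ is affine and $P$ has degree at most $d$, the polynomial $Q$ also has degree at most $d$ in the same number of variables. By construction,
\[
\sup_T |P|=\sup_{[-1,1]^n}|Q|,\qquad \sup_{CT}|P|=\sup_{[-C,C]^n}|Q|.
\]
Applying the chain of equivalences in Proposition \ref{prop_polycoeff} to $Q$ (specifically the relation $\sup_{[-1,1]^n}|Q|\sim_{n,d,C}\sup_{[-C,C]^n}|Q|$) immediately yields
\[
\sup_T|P|\sim_{n,d,C}\sup_{CT}|P|,
\]
with the implicit constants depending only on $n$, $d$, and $C$ (notably \emph{not} on $T$ or on $P$, because the equivalence for $Q$ depends only on its degree and number of variables).

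There is essentially no obstacle here; the only point that requires a line of care is verifying $L([-C,C]^n)=CT$, i.e.\ that anisotropic linear scaling about the center commutes with the dilation used to define $CT$. Once that identity is in hand, the result is a direct transplant of Proposition \ref{prop_polycoeff} via $L$. In particular, one does not need $T$ to be axis-parallel: orthogonality of the $v_i$ is not even used, so the same argument works for any parallelepiped $T$, giving a slightly more general statement if desired.
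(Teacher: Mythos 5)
Your proof is correct and follows essentially the same route as the paper's: both define an affine bijection from $[-1,1]^n$ to $T$, pull $P$ back along it, and invoke the equivalence $\sup_{[-1,1]^n}\sim_{n,d,C}\sup_{[-C,C]^n}$ from Proposition \ref{prop_polycoeff}. You simply spell out the identity $L([-C,C]^n)=CT$ that the paper leaves implicit.
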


\begin{proof}
Let $l$ be an affine transformation that maps $[-1,1]^n$ to $T$. Apply Theorem \ref{prop_polycoeff} to $P \circ l$ to get
$$
\sup_{[-1,1]^n} |P \circ l | \sim_{n,d,C} \sup_{[-C,C]^n} |P \circ l |, 
$$
which implies the desired relation.
\end{proof}

\bibliographystyle{empty}
\bibliography{sample}

\end{document}